\theoremstyle{plain}
\newtheorem{thm}{Theorem}[section]
\newtheorem{theorem}[thm]{Theorem}
\newtheorem{corollary}[thm]{Corollary}
\newtheorem{proposition}[thm]{Proposition}
\newtheorem{lemma}[thm]{Lemma}
\theoremstyle{definition}
\newtheorem{definition}[thm]{Definition}
\newtheorem{condition}[thm]{Condition}
\newtheorem{remark}[thm]{Remark}
\theoremstyle{remark}
\newcommand{\N}{\mathbb{N}}
\newcommand{\R}{\mathbb{R}}
\newcommand{\Z}{\mathbb{Z}}
\newcommand{\prob}{\mathbb{P}}
\newcommand{\E}{\mathbb{E}}
\newcommand{\T}{\mathbb{T}}
\def\T{\mathbb{T}}
\newcommand{\un}{\mathds{1}}
\renewcommand{\textbf}[1]{\begingroup\bfseries\mathversion{bold}#1\endgroup}
\def\calA{\mathcal{A}}
\def\calC{\mathcal{C}}
\def\calD{\mathcal{D}}
\def\calE{\mathcal{E}}
\def\calF{\mathcal{F}}
\def\calG{\mathcal{G}}
\def\calL{\mathcal{L}}
\def\calM{\mathcal{M}}
\def\calR{\mathcal{R}}
\def\calT{\mathcal{T}}
\def\calX{\mathcal{X}}
\def\var{\mathop{\mathrm{Var}}}
\def\P{\mathbb{P}} 
\def\E{\mathbb{E}} 
\def \eps {\varepsilon}
\def\st{\ :\ }
\def\br#1{\left(#1\right)}
\def\brb#1{\left[#1\right]}
\numberwithin{equation}{section}
\def\crit{\textup{Crit}}
\def\thr{\textup{T}_\mathcal{A}}
\def\sad{\textup{S}_\mathcal{A}}
\title{Talagrand's inequality in planar Gaussian field percolation}
\author{Alejandro Rivera\thanks{Univ. Grenoble Alpes, UMR5582, Institut Fourier, 38610 Gières, France, supported by the ERC grant Liko No 676999}}
\date{\today}
\begin{document}
\maketitle

\begin{abstract}
Let $f$ be a stationary isotropic non-degenerate Gaussian field on $\R^2$. Assume that $f=q*W$ where $q\in L^2(\R^2)\cap C^2(\R^2)$ and $W$ is the $L^2$ white noise on $\R^2$. We extend a result by Stephen Muirhead and Hugo Vanneuville by showing that, assuming that $q*q$ is pointwise non-negative and has fast enough decay, the set $\{f\geq -\ell\}$ percolates with probability one when $\ell>0$ and with probability zero if $\ell\leq 0$. We also prove exponential decay of crossing probabilities and uniqueness of the unbounded cluster. To this end, we study a Gaussian field $g$ defined on the torus and establish a superconcentration formula for the threshold $\textup{T}(g)$ which is the minimal value such that $\{g\geq -\textup{T}(g)\}$ contains a non-contractible loop. This formula follows from a Gaussian Talagrand type inequality.
\end{abstract}

\begin{small}
\tableofcontents
\end{small}
\pagebreak

\section{Introduction}

\subsection{On previous results in Gaussian field percolation and the contributions of the present work}
In the present work, we consider a stationary centered Gaussian field $f$ on $\R^2$ which is a.s. continuous and study the percolation of the excursion sets $\calD_\ell=\{f\geq-\ell\}\subset\R^2$. It is widely believed that, if the field satisfies a few simple assumptions, this percolation model should behave like Bernoulli percolation (see \cite{bs_2002}, \cite{bs_07}, \cite{bg_16}, \cite{bm_18}, \cite{bmw_17}, \cite{bg_17}, \cite{rv_17a}, \cite{rv_17b}, \cite{mv_18}).\\

In \cite{bg_16}, the authors proved an analog of the Russo-Seymour-Welsh (or RSW) theorem (see Lemma 4, Chap. 3 of \cite{bollobas_riordan} or Theorem 11.70 and Equation 11.72 of \cite{Grimmett}) for this percolation model under a few general assumptions which were later generalized in \cite{bm_18}, \cite{rv_17a} and \cite{mv_18}. The authors worked with various versions of the following types of assumptions:
\begin{itemize}
\item (Regularity) The field is a.s. $C^k$ smooth for some adequate $k\geq 1$ (see Condition \ref{cond:regularity} below).
\item (Non-degeneracy) For each $x\in\R^2$, the vector $(f(x),\nabla_xf)$ (or some other vector of derivatives of $f$) is a non-degenerate Gaussian vector.
\item (Symmetry) The field is invariant by some rotation of order greater than two and reflection (see Condition \ref{cond:symmetry} below).
\item (Decay) Either the covariance function $x\mapsto\E\brb{f(0)f(x)}$ (and some of its derivatives), or its convolution square-root (as in Condition \ref{cond:decay} below) decays at a certain speed.
\item (Positivity) The covariance function of the field takes only non-negative values (see Condition \ref{cond:positivity} below). As shown in \cite{pitt_82}, this condition is equivalent\footnote{Actually, this is only shown in finite dimension. For continuous crossing events, one can proceed by approximation as in Appendix A of \cite{rv_17a}.} to the FKG inequality, which is a crucial tool in Bernoulli percolation.
\end{itemize}

The Harris-Kesten theorem and exponential decay of correlations (see \cite{kesten_1980}) were also adapted to this setting, first for the case of the Bargmann-Fock field in \cite{rv_17b}, and later, in an axiomatic setting, in \cite{mv_18}. The setting was similar to the one detailed above except for the fact that the positivity condition, Condition (Weak) \ref{cond:positivity}, was replaced by the strictly stronger Condition (Strong) \ref{cond:positivity}. The proof used in \cite{rv_17b} was inspired by \cite{br_2006}, and used an ad-hoc gaussian version superconcentration inequality for boolean functions called the KKL inequality (due to Kahn, Kalai and Linial, see \cite{kkl_1988}). However, the analogy was too tenuous to be generalized to an axiomatic setting. In contrast, \cite{mv_18} used a randomized algorithm approach inspired by \cite{dcrt_2019}, which was more robust but had its own limits, as we shall see in the next paragraph.\\

The aim of the present paper is twofold. The first, explicit aim is our main result (Theorem \ref{t:sharp_threshold} below), in which we replace the strong positivity assumption: (Strong) Condition \ref{cond:positivity} needed in \cite{mv_18} by the more natural assumption: (Weak) Condition \ref{cond:positivity}, which matches the positivity assumption  made in \cite{bg_16} for the RSW theorem. Secondly, we aimed to provide a proof inspired more by the KKL inequality than randomized algorithms and presented in the native language of smooth Gaussian fields, in order to bring out the underlying mechanism. This takes the form of a superconcentration formula for the (continuous) percolation threshold for Gaussian fields on the torus, inspired by Talagrand's inequality from \cite{talagrand_1994} (see Theorem \ref{t:main_formula} below). In particular, the argument has a very different flavor from that of \cite{mv_18}. Actually, the idea of applying superconcentration formulas on the torus to study the phase transition in planar percolation was introduced in \cite{br_2006}. Finally, we relax some regularity assumptions needed in previous works.

\subsection{Setting, notations and formal statement of the main result}

Throughout this paper, we will work with the white noise representation of the Gaussian field $f$ (as in \cite{mv_18}, for instance). More precisely, given a centered stationary Gaussian field $f$ on $\R^2$ with integrable covariance $\kappa(x)=\E\brb{f(0)f(x)}$, the spectral measure of $f$ is of the form $\varrho(\xi)d\xi$ where $d\xi$ is the Lebesgue measure and $\varrho$ is a continuous, positive valued integrable function. In particular, $\sqrt{\varrho}\in L^2(\R^2)$ so its Fourier transform $q$ belongs to $L^2(\R^2)$. Let $W$ be the $L^2$ white noise on $\R^2$. Then, $q*W$ defines a Gaussian field on $\R^2$ with the law of $f$. In the rest of the paper, unless otherwise stated, we assume that $f$ is of the form
\[
f=q*W
\]
where $W$ is the $L^2$ white noise and $q\in L^2(\R^2)$ satisfies some assumptions among the following:
\begin{condition}[Regularity]\label{cond:regularity}
$ $
\begin{itemize}
\item (Weak) The function $q$ is of class $C^2(\R^2)$.
\item (Strong) The function $q$ is of class $C^3$ and satisfies, for some $\eps>0$ and $C<+\infty$, for all $x\in\R^2$ and for all $\alpha\in\N^2$ such that $|\alpha|\leq 3$, $|\partial^\alpha q(x)|\leq C |x|^{-1-\eps}$. Finally, the support of its Fourier transform contains an open subset.
\end{itemize}
\end{condition}
\begin{condition}[Non-degeneracy]\label{cond:non-degeneracy}
$ $
\begin{itemize}
\item (Weak) The function $q$ is non-zero.
\item (Strong) If $\kappa=q*q$, the following matrix is non-degenerate:
\[
\br{\begin{matrix}
\kappa(0) & ^t\nabla_0\kappa\\
\nabla_0\kappa & \nabla^2_0\kappa
\end{matrix}}\, .
\]
Equivalently, if $f=q*W$ where $W$ is the $L^2$ white noise, the vector $(f(x),\nabla_xf)$ is non-degenerate.
\end{itemize}
\end{condition}
\begin{condition}[Symmetry]\label{cond:symmetry}
The function $q$ is invariant by $\frac{\pi}{2}$-rotations around the origin and by reflections around the axis $\R\times\{0\}$.
\end{condition}
\begin{condition}[Decay]\label{cond:decay}
$ $
There exist $\beta>2$ and $C<+\infty$ such that for each $x\in\R$, $|q(x)|+|\nabla_x q|\leq C|x|^{-\beta}$.
\end{condition}
Note that Condition \ref{cond:decay}, together with continuity, implies that $q\in L^2(\R^2)$.
\begin{condition}[Positivity]\label{cond:positivity}
$ $
\begin{itemize}
\item (Weak) For each $x\in\R^2$, $q*q(x)\geq 0$.
\item (Strong) For each $x\in\R^2$, $q(x)\geq 0$.
\end{itemize}
\end{condition}
\begin{remark}\label{rk:non-dg_is_free}
Consider a stationary Gaussian field $f=q*W$ such that $q$ satisfies the (Weak) Conditions \ref{cond:regularity} and \ref{cond:non-degeneracy}, as well as Condition \ref{cond:decay}. Then it must also satisfy the (Strong) Condition \ref{cond:non-degeneracy}. Indeed, otherwise, by stationarity, $f$ would satisfy a non-trivial differential equation of the form $\lambda f+\mu \partial_vf=0$ which would contradict the decay in pointwise correlations.
\end{remark}
It was shown by Pitt in \cite{pitt_82} that the (Weak) Condition \ref{cond:positivity} is equivalent to a certain form of the FKG inequality (see Lemma \ref{l:FKG} below). The (Strong) version of this condition implies the (Weak) version but is much harder to check. In \cite{mv_18}, the authors relied on the (Strong) version to prove the Harris-Kesten theorem. On the other hand, in \cite{bg_16}, only the (Weak) version of Condition \ref{cond:positivity} was needed to prove the RSW theorem. In Theorem \ref{t:sharp_threshold} below, we prove the Harris-Kesten theorem using only the (Weak) Condition \ref{cond:positivity}. We also replace the (Strong) version of Condition \ref{cond:regularity} by the (Weak), although this improvement is more technical in nature. More precisely, for each $\ell\in\R$, let $\P_\ell$ be the law of $f_\ell:=f+\ell$ so that $\calD_\ell=f_\ell^{-1}(]0,+\infty[)$. For each $R>0$, let $\calR_R:=[0,6R]\times[0,4R]$. We denote by $\textup{Cross}_R$ the event that $\calD_\ell\cap\calR_R$ contains a continuous path connecting $\{0\}\times[0,4R]$ to $\{6R\}\times[0,4R]$. In \cite{mv_18}, the authors prove the following results: 
\begin{theorem}[Exponential decay, see Theorem 1.11  of \cite{mv_18}]\label{t:mv_exponential decay}
Let $f$ be a Gaussian field on $\R^2$ of the form $q*W$ where $W$ is the $L^2$ white noise on $\R^2$ and $q$ satisfies the (Strong) version of Conditions \ref{cond:regularity} and \ref{cond:positivity}, as well as Conditions \ref{cond:symmetry}  \ref{cond:decay} and the (Weak) Condition \ref{cond:non-degeneracy}. Then, for each $\ell>0$, there exists $c=c(\ell)>0$ such that for each $R>0$,
\[
\prob_\ell\brb{\textup{Cross}_R}\geq 1-e^{-cR}\, .
\]
\end{theorem}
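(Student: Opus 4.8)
The plan is to derive exponential decay from three ingredients: the RSW theorem of \cite{bg_16} (valid under the (Weak) hypotheses), a planar-duality lemma for Gaussian excursion sets, and a quantitative sharp-threshold inequality of the type used in \cite{mv_18} following \cite{dcrt_2019}. Write $\theta_R(\ell):=\prob_\ell[\textup{Cross}_R]$. First I would establish exponential decay in the \emph{subcritical} regime $\ell<0$. After discretising $f$ on a fine lattice (or on a white-noise basis) and passing to the limit, one runs an OSSS / randomised-algorithm inequality on an exploration of the crossing cluster; controlling the revealment of the algorithm by one-arm or box-crossing probabilities yields a differential inequality of the schematic form
\[
\theta_R'(\ell)\ \geq\ \frac{c\,R}{\sum_{r\leq R}\theta_r(\ell)}\;\theta_R(\ell)\bigl(1-\theta_R(\ell)\bigr)\, .
\]
This forces, for each fixed $\ell$, a dichotomy: either $\sum_r\theta_r(\ell)<\infty$ and then $\theta_R(\ell)\to 0$ exponentially fast in $R$, or $\theta_R(\ell)\to 1$. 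The $f\mapsto-f$ symmetry of the centred field, combined with planar duality, pins the transition exactly at $\ell=0$: self-duality forces the crossing probability of a square to equal $1/2$ and, by RSW, $\theta_R(0)$ stays bounded away from $0$ and $1$ uniformly in $R$; hence for $\ell\leq 0$ the second alternative of the dichotomy is impossible and $\theta_R(\ell)\leq e^{-c(\ell)R}$ for every $\ell<0$.

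Second, I would transfer this to the supercritical regime $\ell>0$ by duality. If $\calD_\ell\cap\calR_R$ contains no left--right crossing, then (by the a.s. topological duality for continuum excursion sets, valid under the regularity and non-degeneracy hypotheses) the complementary set $\{f\leq-\ell\}$ contains a top--bottom crossing of a slightly enlarged rectangle. Since $-f$ has the law of $f$, this event has the same probability as a primal crossing of $\calD_{-\ell}$ with $-\ell<0$, which is $\leq e^{-c'R}$ by the subcritical bound just obtained. Therefore $\prob_\ell[\textup{Cross}_R]\geq 1-e^{-c'(\ell)R}$, as claimed.

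The step I expect to be hardest is the sharp-threshold inequality itself: one must exhibit an exploration algorithm with genuinely small revealment, which needs RSW inputs and a quasi-multiplicativity estimate for box-crossing probabilities, and one must make the discretisation-to-continuum limit rigorous with only $C^2$ (resp. $C^3$) smoothness of $q$. This is exactly the point reorganised in the present paper: rather than randomised algorithms, it proves a Gaussian Talagrand-type inequality and derives from it a superconcentration formula for the torus threshold $\textup{T}(g)$ (Theorem \ref{t:main_formula}), which is then transported to $\R^2$ through a periodisation and coupling argument controlled by the decay Condition \ref{cond:decay}. That route has the advantage of running under only the (Weak) Condition \ref{cond:positivity}, via the FKG inequality (Lemma \ref{l:FKG}), which is what yields the strengthened statement Theorem \ref{t:sharp_threshold}.
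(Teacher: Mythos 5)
Two remarks. First, be aware that the paper does not actually prove this statement: it is quoted verbatim from \cite{mv_18} (Theorem 1.11 there), and the paper's own contribution is to recover its conclusion under weaker hypotheses via the chain Theorem \ref{t:main_formula} $\Rightarrow$ Proposition \ref{p:phase_transition} $\Rightarrow$ exponential decay, the last step being a renormalization argument that the paper explicitly omits and delegates back to \cite{mv_18}. Your proposal is, in substance, a sketch of the original proof in \cite{mv_18}: an OSSS/randomized-algorithm sharp-threshold inequality, the \cite{dcrt_2019} dichotomy, and planar duality. That is a legitimate and genuinely different route from the one taken here; you correctly identify in your last paragraph that the paper replaces the OSSS step by a Gaussian Talagrand inequality on the torus, and the OSSS route is precisely the one for which \cite{mv_18} needed the (Strong) positivity assumption, whereas the Talagrand route runs under the (Weak) one.

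Second, as a proof your text is a roadmap rather than an argument, and two links need care beyond the OSSS machinery you already flag as hard. The dichotomy is not exhaustive pointwise in $\ell$: the \cite{dcrt_2019} lemma produces a critical parameter $\ell_1$ with exponential decay strictly below $\ell_1$ and only a uniform lower bound on $\theta_R$ strictly above it (upgrading that to $\theta_R\to 1$ already requires an FKG/RSW bootstrap), while at $\ell_1$ itself neither alternative holds. So ``ruling out $\theta_R\to 1$ for $\ell\leq 0$'' does not by itself place every $\ell<0$ in the exponential-decay regime; what one actually shows is $\ell_1=0$, using that RSW and self-duality keep $\theta_R(0)$ bounded away from both $0$ and $1$. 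In the duality step, the dual of the long-way crossing of $[0,6R]\times[0,4R]$ is an easy-way crossing of the complement, and exponential decay of hard-way crossing probabilities does not directly bound easy-way crossings; you need the one-arm (diameter) version of subcritical decay together with a union bound over $O(R)$ translates. Neither point is fatal --- both are handled in \cite{mv_18} --- but they are where the sketch would need real work.
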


\begin{theorem}[The phase transition, see Theorem 1.6  of \cite{mv_18}]\label{t:mv_phase_transition}
Let $f$ be a Gaussian field on $\R^2$ of the form $q*W$ where $W$ is the $L^2$ white noise on $\R^2$ and $q$ satisfies the (Strong) version of Conditions \ref{cond:regularity} and \ref{cond:positivity}, as well as Conditions \ref{cond:symmetry} and \ref{cond:decay}, and the (Weak) Condition \ref{cond:non-degeneracy}. Then, with probability one,
\begin{itemize}
\item for each $\ell>0$, the set $\calD_\ell$ has a unique unbounded connected component.
\item for each $\ell\leq 0$, the set $\calD_\ell$ does not have any unbounded connected components.
\end{itemize}
\end{theorem}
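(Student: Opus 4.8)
The plan is to derive Theorem~\ref{t:mv_phase_transition} from the classical Harris--Kesten scheme, taking as inputs the exponential decay of crossing probabilities (Theorem~\ref{t:mv_exponential decay}), the RSW/box-crossing estimates of \cite{bg_16} (which in fact hold under the (Weak) Condition~\ref{cond:positivity}), the FKG inequality (Lemma~\ref{l:FKG}), and the elementary symmetry $f\overset{d}{=}-f$ satisfied by any centered Gaussian field. I would handle the three regimes $\ell>0$, $\ell=0$ and $\ell<0$ in turn.

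For $\ell>0$, I would run the standard Borel--Cantelli argument over concentric dyadic annuli (as for the analogous statement in Bernoulli percolation, see e.g.\ \cite{Grimmett}). Using the FKG inequality to assemble, out of crossings of $6R\times 4R$ rectangles, both loops of $\calD_\ell$ encircling the origin in each dyadic annulus and long ``radial'' crossings joining consecutive such loops, Theorem~\ref{t:mv_exponential decay} together with a union bound shows that the $n$-th of these events fails with probability at most $C_\ell e^{-c_\ell 3^n}$; summability and Borel--Cantelli then yield, almost surely, an unbounded component of $\calD_\ell$, and uniqueness follows at once, since any unbounded component must cross every large annulus and hence contain every sufficiently large such loop. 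Performing this construction on a single probability-one event for all rational $\ell>0$ and invoking the monotonicity $\calD_{\ell'}\subseteq\calD_\ell$ for $0<\ell'\le\ell$ upgrades the conclusion so that it holds simultaneously for every real $\ell>0$.

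The case $\ell=0$ is the critical one and the only genuinely delicate point. Absolute continuity of the spectral measure makes the $\R^2$-translation action ergodic, so the number $N_0$ of unbounded components of $\calD_0$ is almost surely a constant in $\{0,1,\infty\}$; a Burton--Keane argument, in which the role of finite energy is played by the local absolute continuity of $f$ under a Cameron--Martin shift by a bump function making $f$ positive throughout a fixed box, rules out $N_0=\infty$. If $N_0=1$, then by the symmetry $f\overset{d}{=}-f$ the set $\{f<0\}$ likewise has almost surely a unique unbounded component, and I would invoke Zhang's square argument: the RSW estimates together with the square-root trick show that, for $n$ large, the unbounded ``$+$''-component meets $[-n,n]^2$ and escapes to infinity through each prescribed side with probability close to $1$, and similarly for the ``$-$''-component, so that intersecting four such events --- two ``$+$''-arms leaving through the left and right sides, two ``$-$''-arms leaving through the top and bottom --- forces, with positive probability, a planar configuration in which a ``$+$''-path to infinity must cross a ``$-$''-path to infinity, which is absurd. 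Hence $N_0=0$. Finally, for $\ell<0$ the inclusion $\calD_\ell\subseteq\calD_0$ shows that any unbounded component of $\calD_\ell$ would lie inside an unbounded component of $\calD_0$, so there is none; this holds almost surely and simultaneously for all $\ell\le 0$.

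Since the conceptual core --- exponential decay of crossing probabilities --- is precisely what is being assumed here, the remaining difficulty is technical and topological rather than structural: one must make rigorous the primal/dual crossing duality used at $\ell=0$ when the interface $\{f=0\}$ is a genuine, if lower-dimensional, set rather than empty (this is where the $C^2$ regularity enters, to ensure that $\{f=0\}$ is almost surely a one-dimensional submanifold behaving as expected with respect to crossings), and one must verify the local absolute continuity property underlying the Burton--Keane step. I expect these, not any new idea, to be the main points requiring care; alternatively, one may simply note that the argument of \cite{mv_18} goes through verbatim, its sole use of the strong positivity condition having been via the exponential decay.
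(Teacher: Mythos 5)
The paper contains no proof of this statement: it is quoted verbatim from \cite{mv_18} (Theorem 1.6 there) purely as a benchmark for Theorem \ref{t:sharp_threshold}, and even the deduction of the latter from Proposition \ref{p:phase_transition} is explicitly deferred to the renormalization argument of \cite{mv_18}. Your outline --- Borel--Cantelli over dyadic annuli via exponential decay, the union bound and FKG for $\ell>0$, ergodicity plus a continuum Burton--Keane and Zhang duality argument at $\ell=0$, and monotonicity for $\ell<0$ --- is precisely the standard route taken in \cite{mv_18} (and, for the critical level, in Alexander's theorem on level sets of positively correlated fields), and the only delicate points are the technical ones you already identify.
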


In this article, we further extend the results of \cite{mv_18} to include all fields for which we know the Russo-Seymour-Welsh property to hold (the most general statement so far being Theorem 4.7 of \cite{mv_18}, restated in Lemma \ref{l:rsw} below). More precisely, we establish the following theorem:

\begin{theorem}\label{t:sharp_threshold}
Assume that $q$ satisfies the (Weak) version of Conditions \ref{cond:regularity}, \ref{cond:non-degeneracy}, and \ref{cond:positivity}, as well as Conditions \ref{cond:symmetry} and \ref{cond:decay}. Then, the conclusions of Theorems \ref{t:mv_exponential decay} and \ref{t:mv_phase_transition} both hold.
\end{theorem}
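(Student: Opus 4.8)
The strategy is to reduce, via a discretization and superconcentration argument on the torus, the sharp threshold statement to the RSW estimates of Lemma \ref{l:rsw}, which are already available under the (Weak) hypotheses. First I would set up a family of Gaussian fields $g = g_{n}$ on the torus $\T^2$ (or on a large torus $\T_n^2 = (\R/n\Z)^2$) obtained by periodizing the white-noise representation $f = q*W$; because only the (Weak) positivity $q*q \geq 0$ is assumed, the FKG inequality (Lemma \ref{l:FKG}) still holds for $g$, and this is the only place positivity enters. For such a field one defines the continuous percolation threshold $\textup{T}(g)$ as the least $t$ such that $\{g \geq -t\}$ contains a non-contractible loop; this is the random variable to which the Talagrand-type inequality of Theorem \ref{t:main_formula} applies. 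The inequality gives an upper bound on $\var(\textup{T}(g))$ of the form (sum of squared "influences") $/\log(1/\max \text{influence})$, i.e. a genuine superconcentration statement: the fluctuations of $\textup{T}(g)$ are much smaller than the trivial bound, which forces the crossing probabilities at levels slightly above and below the critical level to be separated — a sharp-threshold phenomenon.

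Next I would run the standard sharp-threshold-to-phase-transition machinery. Concretely: (i) using RSW (Lemma \ref{l:rsw}) together with a square-root trick and FKG, one shows the crossing probabilities $\prob_\ell[\textup{Cross}_R]$ are bounded away from $0$ and $1$ uniformly in $R$ at $\ell = 0$, and that $\E[\textup{T}(g_n)] \to 0$ as $n \to \infty$; (ii) the superconcentration formula shows $\textup{T}(g_n)$ concentrates around its mean at a rate that, combined with a differential-inequality / Margulis–Russo type identity for how crossing probabilities move with $\ell$, yields that for $\ell > 0$ fixed, $\prob_\ell[\textup{Cross}_R] \to 1$ with the quantitative rate $1 - \prob_\ell[\textup{Cross}_R] \leq e^{-c(\ell)R}$, and symmetrically for $\ell < 0$ the probability decays to $0$; (iii) the exponential decay of crossing probabilities in the subcritical/one-arm regime is then upgraded, via the usual quasi-multiplicativity / gluing arguments (exactly as in \cite{rv_17b} and \cite{mv_18}), to the existence and uniqueness of the unbounded cluster in $\calD_\ell$ for $\ell > 0$ and its absence for $\ell \leq 0$. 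The passage from the torus back to $\R^2$ is handled by comparing the periodized field $g_n$ with $f$ on boxes of size $o(n)$, where the covariances are close by Condition \ref{cond:decay}; the decay exponent $\beta > 2$ is exactly what makes this comparison quantitative enough.

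The main obstacle — and the technical heart of the paper — is establishing the Talagrand-type inequality (Theorem \ref{t:main_formula}) for the continuous threshold $\textup{T}(g)$ in the native smooth-Gaussian-field language, and in particular controlling the relevant "influences." Unlike the discrete KKL/Talagrand setting, here one must make sense of the pivotal/derivative structure of the event $\{\textup{T}(g) \leq t\}$ for a smooth field: the natural notion of influence is an integral over $\R^2$ (or $\T^2$) of a pivotal density, and bounding it requires local regularity estimates on $g$ (Kac–Rice type bounds, nondegeneracy of $(g(x),\nabla_x g)$, which by Remark \ref{rk:non-dg_is_free} is automatic here) together with an a priori RSW-based bound showing no single location is too influential. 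A secondary difficulty is that the (Weak) regularity hypothesis ($q \in C^2$ only) is weaker than what earlier works used, so the quantitative local estimates on $g$ and its gradient must be carried out with only two derivatives available; this forces one to work with the excursion set boundary at the level of continuity/transversality rather than higher smoothness, and to replace any argument that implicitly used $C^3$ by an approximation or a direct $C^2$ estimate. Once the inequality and the influence bounds are in place, the deduction of Theorems \ref{t:mv_exponential decay} and \ref{t:mv_phase_transition} follows the now-classical template and should present only routine (if lengthy) difficulties.
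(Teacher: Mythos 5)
Your overall architecture (periodize to a large torus, apply a Talagrand-type variance bound to a continuous threshold functional, use FKG under weak positivity, glue with RSW, finish by renormalization) matches the paper's, and your step (iii) and the torus-to-plane comparison via Condition \ref{cond:decay} are essentially what the paper does. But the two mechanisms you identify as the heart of the argument are not the ones the paper uses, and the ones you propose would reintroduce exactly the obstruction the theorem is meant to remove. First, you propose to control the ``influences'' by a pivotal-density/arm-event bound proved via RSW, and to propagate the threshold by a Margulis--Russo differential inequality. Both of these devices require the crossing event to be monotone in the individual coordinates of the white-noise discretization, which holds only when $q\geq 0$ pointwise, i.e.\ under the (Strong) positivity condition; under the (Weak) condition $q*q\geq 0$ the function $q$ may change sign and the event is not coordinate-wise monotone. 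The paper's route avoids this entirely: by Morse theory (Lemma \ref{l:threshold}) the derivative of the threshold functional in the direction of the basis element $q(\cdot-z)$ is exactly $q(\sad(f)-z)$, where $\sad(f)$ is the unique critical point realizing the threshold; stationarity makes $\sad(f)$ uniformly distributed on the torus, so the ratio $\|\partial_k F\|_{L^2}/\|\partial_k F\|_{L^1}$ in Theorem \ref{t:talagrand} is computed \emph{exactly} as $\|q\|_{L^2}\sqrt{|\calT|}/\|q\|_{L^1}$, with no percolation input and no monotonicity in the coordinates. This is the content of Proposition \ref{p:main} and Theorem \ref{t:main_formula}, and it is why only $q*q\geq 0$ (needed solely for the FKG gluing steps) enters.

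Second, your step (i) pins the critical level by an RSW plus square-root-trick argument at $\ell=0$. The paper instead pins the quantiles of $\textup{T}_\calL(f)$ by a purely topological duality statement on the torus (Lemma \ref{l:algebraic_topology}): for a Morse function on $\T^2$, either $\{f>0\}$ or $\{f<0\}$ carries a non-contractible loop, so by the symmetry $f\mapsto -f$ and invariance under coordinate swap one gets $\prob[\textup{T}_\calL(f)\leq 0]\geq 1/4$ and $\prob[\textup{T}_\calL(f)\geq 0]\geq 1/4$ (Lemma \ref{l:loops_and_duality}); combined with the variance bound and Chebyshev this yields Corollary \ref{c:loop_percolation}, with RSW appearing only afterwards, in the gluing of widthwise crossings into lengthwise ones (Lemmas \ref{l:loop_to_cross} and \ref{l:gluing}). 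As written, your plan therefore has a genuine gap at its core: the influence bound and the Russo-type identity you rely on are not available under the stated hypotheses, and the replacement (saddle-point derivative plus stationarity plus torus duality) is the actual new idea of the proof.
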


As explained in Remark \ref{rk:non-dg_is_free}, from the assumptions of Theorem \ref{t:sharp_threshold}, $q$ also satisfies (Strong) Condition \ref{cond:non-degeneracy}. We mention this now because (Weak) Condition \ref{cond:regularity}, (Strong) Condition \ref{cond:non-degeneracy} and (Weak) Condition \ref{cond:positivity} are the weakest assumptions known to imply the FKG inequality for continuous crossings (see Lemma \ref{l:FKG}). As shown in \cite{mv_18}, Theorem \ref{t:sharp_threshold} is a consequence of the following proposition:

\begin{proposition}\label{p:phase_transition}
Assume that $q$ satisfies the (Weak) version of Conditions \ref{cond:regularity}, \ref{cond:non-degeneracy} and \ref{cond:positivity}, as well as Conditions \ref{cond:symmetry} and \ref{cond:decay}. Then, for each $\ell>0$,
\[
\lim_{R\rightarrow+\infty}\prob_\ell\brb{\textup{Cross}_R}=1\, .
\]
\end{proposition}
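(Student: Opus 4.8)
The plan is to transfer the crossing event to a large flat torus, introduce the level at which the occupied set of the torus field first becomes topologically non‑trivial, and prove that this level concentrates around $0$; once this is done, every $\ell>0$ lies above the threshold with probability tending to $1$, which is exactly Proposition \ref{p:phase_transition}.

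First I would discretise the geometry onto a torus. For a large scale $N$, let $q_N$ be a symmetric $N$‑periodic modification of $q$ (periodise $\chi q$, with $\chi$ a smooth cut‑off at scale $N/3$) and put $g=g_N:=q_N*W_N$, where $W_N$ is the white noise on $\T_N:=(\R/N\Z)^2$. Condition \ref{cond:decay} ensures that $q_N\in C^2(\T_N)$ inherits the symmetries of Condition \ref{cond:symmetry}, that $g_N$ is non‑degenerate (so that, by a Bulinskaya‑type argument, its excursion sets are a.s.\ regular), and, crucially, that on any box of side $o(N)$ one can couple $g_N$ with $f$ so that $\|g_N-f\|_{C^0}\to 0$ in probability as $N\to\infty$ (the defect being governed by the mass of $q$ outside a ball of radius $\asymp N/3$, which Condition \ref{cond:decay} makes summable and small). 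With this coupling, the FKG inequality of Lemma \ref{l:FKG}, and the classical manipulations that pass between crossings of rectangles of different bounded aspect ratios, it is enough to control, in the limit $N\to\infty$, the topology of the occupied set of $g_N$ on $\T_N$ at a fixed positive level.

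Introduce the threshold
\[
\textup{T}(g_N)\ :=\ \inf\bigl\{t\in\R \st \{g_N\geq-t\}\ \text{contains a non-contractible loop in}\ \T_N\bigr\}\, ,
\]
which is a.s.\ finite (the occupied set is all of $\T_N$ for $t$ large and empty for $t$ small, and grows with $t$) and which, viewed as a functional of the white noise, is $O(\|q\|_2)$‑Lipschitz for the Cameron--Martin norm uniformly in $N$ (perturbing $W_N$ by $h$ shifts $g_N$ by $q_N*h$, hence moves $\textup{T}$ by at most $\|q_N*h\|_\infty\leq\|q_N\|_2\|h\|_2$). The argument then rests on two inputs. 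The first is the superconcentration estimate $\var(\textup{T}(g_N))\to 0$, which is the content of the Gaussian Talagrand‑type inequality behind Theorem \ref{t:main_formula}: the decisive point is that, by the RSW bounds of Lemma \ref{l:rsw} combined with quasi‑independence, the local contribution of the white noise near any fixed point to the near‑critical loop event is uniformly small as $N\to\infty$, so the logarithmic improvement of Talagrand's inequality over the Gaussian Poincaré inequality is genuine. The second input pins $\E[\textup{T}(g_N)]$ at $0$ up to $o(1)$: by the $\tfrac{\pi}{2}$‑rotation symmetry of the square torus, topological duality on $\T_N$ ($\{g_N\geq-t\}$ contains a non-contractible horizontal loop iff $\{g_N<-t\}$ contains no non-contractible vertical loop), and the symmetry $g_N\stackrel{d}{=}-g_N$, the horizontal‑loop probability at level $t$ equals $1$ minus that at level $-t$; RSW moreover keeps $\P[\textup{T}(g_N)\leq 0]$ bounded away from $0$ and $1$, and combined with $\var(\textup{T}(g_N))\to 0$ this forces $\E[\textup{T}(g_N)]\to 0$. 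Hence $\textup{T}(g_N)\to 0$ in probability.

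Consequently, for each fixed $\ell>0$, with probability tending to $1$ the occupied set $\{g_N\geq-\ell\}$ contains non-contractible loops in both coordinate directions (using $\textup{T}(g_N)\to 0$ in probability together with the duality relation and the $\tfrac{\pi}{2}$‑symmetry), while the dual set $\{g_N\geq\ell\}$ contains none (using $\P[\textup{T}(g_N)\leq -\ell]\to 0$). Feeding these two facts back through the RSW/FKG renormalisation and the coupling with $f$ from the reduction step yields $\P_\ell[\textup{Cross}_R]\to 1$. The step I expect to be the main obstacle is the superconcentration estimate itself: giving rigorous meaning to the a.e.\ defined gradient of the non-smooth functional $\textup{T}(g_N)$ and bounding its spatial spread through RSW‑type arm estimates is exactly what Theorem \ref{t:main_formula} requires; the periodisation, the duality at level $0$, and the passage from a topological statement on $\T_N$ to an honest crossing of $\calR_R$ are softer, but must be carried out with care because the torus events are only approximately self-dual and the coupling with $f$ is only approximate.
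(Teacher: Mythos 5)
Your proposal follows essentially the same route as the paper: periodise $q$ onto a large torus, define the non-contractible-loop threshold, concentrate it around $0$ via the Talagrand-type variance bound of Theorem \ref{t:main_formula} together with a topological duality/symmetry argument pinning the quantiles (the paper's Lemmas \ref{l:algebraic_topology} and \ref{l:loops_and_duality}), and then convert torus loops into rectangle crossings via FKG, Lemma \ref{l:loop_to_cross} and the RSW gluing of Lemma \ref{l:gluing}. Two inaccuracies in your description of the black boxes, neither fatal to the argument: the logarithmic gain in Theorem \ref{t:main_formula} comes from the ratio $\|q\|_{L^1}/(\|q\|_{L^2}\sqrt{|\calT|})$ evaluated at the uniformly distributed saddle point, not from RSW arm estimates or quasi-independence; and level-$0$ self-duality on the torus is not an exact ``iff'' (only the three-case statement of Lemma \ref{l:algebraic_topology} holds), which is why the paper settles for the quantile bounds $\prob\left[\textup{T}_\calL(f)\leq 0\right]\geq 1/4$ and $\prob\left[\textup{T}_\calL(f)\geq 0\right]\geq 1/4$ rather than showing $\E\left[\textup{T}_\calL(f)\right]\to 0$ directly.
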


Theorem \ref{t:sharp_threshold} follows from Proposition \ref{p:phase_transition} by a standard renormalization argument which we omit here for brevity. For instance, the argument is given in the proof of Theorem 6.1 of \cite{mv_18} where Proposition \ref{p:phase_transition} is replaced by \cite{mv_18}'s Theorem 5.1.\\

Let us conclude this section by describing the overall layout of the paper. Proposition \ref{p:phase_transition} will follow from Theorem \ref{t:main_formula}, which we will state and establish in Sections \ref{s:torus_proof} and \ref{s:torus_lemmas}. More precisely, in Section \ref{s:torus_proof} we state and prove Theorem \ref{t:main_formula} using a series of technical lemmas which we state along the way. These lemmas are proved in Section \ref{s:torus_lemmas}. Finally, in Section \ref{s:percolation}, we use some percolation arguments to deduce Proposition \ref{p:phase_transition} from Theorem \ref{t:main_formula}.\\

\paragraph{Acknowledgements:} The ideas of this paper stemmed from previous collaborations with Dmitry Beliaev, Stephen Muirhead and Hugo Vanneuville. I am grateful to the three of them for many helpful discussions. I am also thankful to Christophe Garban and Hugo Vanneuville for their comments on a preliminary version of this manuscript.

\section{The key formula: Theorem \ref{t:main_formula}}\label{s:torus_proof}

In this section, we state and prove the main ingredient of the proof of Proposition \ref{p:phase_transition}: Theorem \ref{t:main_formula}. More precisely, the purpose of Subsections \ref{ss:topological_setup} and \ref{ss:main_formula} is to present Theorem \ref{t:main_formula}. In Subsections \ref{ss:talagrand}, \ref{ss:perfect_morse}, \ref{ss:white_noise} and \ref{ss:approximations} we state a series of results used in the proof of Theorem \ref{t:main_formula}. These intermediate results are proved in Section \ref{s:torus_lemmas} below. Finally, in Subsection \ref{ss:main_proof}, we combine the results from previous subsections to prove Theorem \ref{t:main_formula}.\\

\subsection{Admissible events and the threshold map}\label{ss:topological_setup}
Theorem \ref{t:main_formula} is a superconcentration formula for the percolation threshold for certain events which we now describe. Let $\calT=\R/2\pi R_1\Z\times\dots\times\R/2\pi R_d\Z$ be a $d$-dimensional torus equipped with the Lebesgue measure $dx$ inherited from $\R^d$ and let $|\calT|=\int_\calT dx$. For each $l\in\N$, we equip $C^l(\calT)$, the space of $l$-times continuously differentiable, real-valued functions on $\calT$ with the norm $\|u\|_{C^l}:=\max_{|\alpha|\leq l}\max_{x\in\calT}|\partial^\alpha u(x)|$.

\begin{definition}\label{d:admissible_set}
We will say that  $\calA\subset C^0(\calT)$ is \textbf{admissible} if it satisfies the following properties:
\begin{itemize}
\item The set $\calA$ is a \textbf{topological threshold set}: For each $u\in\calA$, and $v\in C^0(\calT)$, if there exists an isotopy $(\phi_t)_{t\in[0,1]}$ of $\calT$ such that $\phi_0=id$ and $\phi_1(\{v\geq 0\})=\{u\geq 0\}$ then, $v\in\calA$.
\item The set $\calA$ is \textbf{increasing}: For each $u\in\calA$ and $v\in C^0(\calT)$, if $u\leq v$ then $v\in\calA$.
\item $\calA$ and $\calA^c$ are both non-empty.
\end{itemize}
\end{definition}
Throughout the rest of the section, we work with a fixed admissible set $\calA$.\\

Let $u\in C^0(\calT)$. Since $\calA$ is increasing and neither $\calA$ nor its complement are empty, there exist $\ell_1,\ell_2\in\R$ such that $u+\ell_1\in\calA^c$ and $u+\ell_2\in\calA$. This allows us to define the \textbf{threshold map}:
\[
\thr:C^0(\calT)\rightarrow\R
\]
that associates to $u$ the infimum of the levels $\ell\in\R$ such that $u+\ell\in\calA$. By construction, this map is Lipshitz on $C^0(\calT)$. Indeed, for each $u,v\in C^0(\calT)$, $|\thr(u)-\thr(v)|\leq\|u-v\|_{C^0(\calT)}$.

\subsection{The concentration formula for $\thr$}\label{ss:main_formula}

Let $\nu\in]0,1[$. Let $W$ be the $L^2$ white noise on $\calT$ and let\footnote{The notation $C^\nu$ is shorthand for $C^{0,\nu}$, the functions of H\"older class $\nu$. See Appendix \ref{s:appendix} for more details.} $q\in C^\nu(\calT)$ and let $f=q*W$. Then, $f$ is an a.s. centered, stationary, continuous Gaussian field on $\calT$ (see Lemma \ref{l:basic_regularity}) . Since $f$ is a.s. $C^0$ and $\thr$ is continuous in $C^0$ topology, $\thr(f)$ is measurable with respect to $f$. The aim of this section will be to prove the following theorem:

\begin{theorem}\label{t:main_formula}
Let $\calA$ be an admissible subset of $C^0(\calT)$. Assume that $\sigma^2=\var(f(x))>0$ (for any $x\in\calT$). The random variable $\thr(f)$ is square integrable and there exists an absolute constant $C<+\infty$ such that
\[
\var\br{\thr(f)}\leq C\sigma^2\brb{1+\left|\ln\br{\frac{\sigma\sqrt{|\calT|}}{\int_\calT |q(x)|dx}}\right|}^{-1}\, .
\]
\end{theorem}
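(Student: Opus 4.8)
The plan is to follow the scheme of Talagrand's inequality from \cite{talagrand_1994}, adapted to the smooth Gaussian field setting on the torus, via a semigroup / interpolation argument. Write $L = \thr(f)$, a Lipschitz functional of the white noise $W$ in the $C^0$-norm. The starting point is the Gaussian Poincaré / Ornstein--Uhlenbeck spectral representation: if $(P_t)_{t\ge 0}$ is the OU semigroup acting on functionals of $W$, then $\var(L) = \int_0^\infty e^{-t}\,\E\!\left[\langle \nabla L, P_t \nabla L\rangle\right]\,dt$, where $\nabla L$ denotes the Malliavin derivative of $L$ (an element of $L^2(\calT)$). Talagrand's improvement over the naive Poincaré bound $\var(L)\le \E\|\nabla L\|^2$ comes from the log-Sobolev-type hypercontractivity estimate $\E[\langle\nabla L, P_t\nabla L\rangle] \le \sum_i \|\partial_i L\|_{L^2}\,\|P_t \partial_i L\|_{L^2} \le \sum_i e^{-t}\|\partial_i L\|_2 \cdot (\text{something}) $; more precisely one uses $\|P_t h\|_2^2 \le \|h\|_{1+e^{-2t}}^2$ type hypercontractivity to turn the $L^2$ norm of a "coordinate derivative" into a much smaller $L^p$ norm with $p$ close to $1$, and then pays only a logarithmic price. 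So the first main step is to set up the correct analog of "coordinate derivatives" in this continuous Gaussian setting; since the white noise is indexed by a continuum, the role of the discrete $\|\partial_i L\|_\infty$ is played by a pointwise bound on the Malliavin derivative density.

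The second step is to estimate the Malliavin derivative of $L=\thr(f)$ itself. Since $f = q*W$, perturbing $W$ by a direction $h\in L^2(\calT)$ changes $f$ by $q*h$, and because $\thr$ is $1$-Lipschitz in $C^0$, the derivative of $L$ in direction $h$ is bounded by $\|q*h\|_{C^0}\le \|q\|_{L^2}\|h\|_{L^2}$, giving $\|\nabla L\|_{L^2(\calT)}\le \|q\|_{L^2}$ crudely; but the key point is the pointwise control: heuristically $D_x L = $ (rate of change of the threshold as one pushes white noise near $x$) and, using the Morse-theoretic description of the threshold (the excursion set $\{f\ge -\thr(f)\}$ acquires a non-contractible loop exactly at a saddle point $x_0$ of $f$, cf. the "perfect Morse function" subsection referenced in the excerpt), one expects $D_x L$ to concentrate near that saddle: roughly $D_x L \approx q(x - x_0)/|\nabla^2 f(x_0)\text{-stuff}|$ up to normalization. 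Even without pinning this down exactly, what is needed is the bound $\|D_\cdot L\|_{L^1(\calT)}\lesssim \int_\calT |q(x)|\,dx$ together with $\|D_\cdot L\|_{L^2(\calT)}\lesssim \sigma$ (the latter coming from the non-degeneracy $\sigma^2>0$, which controls the density of the level of the saddle). These two estimates — one in $L^1$, scaled by $\int |q|$, and one in $L^2$, scaled by $\sigma$ — are precisely the ingredients that, fed into the hypercontractivity bound, produce the ratio $\int_\calT|q|/(\sigma\sqrt{|\calT|})$ inside the logarithm.

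The third step is the interpolation itself: split $\int_0^\infty e^{-t}\,\E[\langle\nabla L,P_t\nabla L\rangle]\,dt$ at a threshold time $t_*$. For $t\le t_*$ use the trivial bound $\E[\langle\nabla L, P_t\nabla L\rangle]\le \E\|\nabla L\|_2^2 \lesssim \sigma^2$ (or even just $\|q\|_2^2$), contributing $\lesssim \sigma^2 t_*$. For $t\ge t_*$ use hypercontractivity: $\E[\langle\nabla L,P_t\nabla L\rangle] = \int_\calT \E[D_x L\cdot P_t D_x L]\,dx \le \int_\calT \|D_x L\|_{L^{p'}}\|P_t D_x L\|_{L^p}\,dx$ with $p = 1+e^{2t}$ (so $p'$ close to $1$), $\le \int_\calT \|D_x L\|_{L^2}\|D_x L\|_{L^{p'}}\,dx$, and bound $\|D_x L\|_{L^{p'}}$ by interpolating between the $L^1$-in-$\omega$ and $L^2$-in-$\omega$ controls — but the cleaner route is to integrate in $x$ first and use $\int_\calT \E|D_x L|\,dx \lesssim \int|q|$ and $\int_\calT \E|D_xL|^2\,dx\lesssim\sigma^2$, which after the hypercontractive contraction gives a contribution $\lesssim (\int|q|)^2 \cdot (\text{polynomial in }e^{-t})$ that integrates to something like $(\int|q|)^2 e^{-ct_*}$ or $\sigma\int|q|\cdot e^{-ct_*}$. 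Optimizing $t_*$ — balancing $\sigma^2 t_*$ against a term decaying exponentially in $t_*$ with the prefactor involving $\int|q|$ and $\sigma\sqrt{|\calT|}$ — yields $t_* \asymp 1 + |\ln(\sigma\sqrt{|\calT|}/\int|q|)|$ and hence the claimed bound $\var(L)\lesssim \sigma^2/(1+|\ln(\sigma\sqrt{|\calT|}/\int|q|)|)$. Square-integrability of $\thr(f)$ follows separately from the $1$-Lipschitz property and Gaussian concentration for $\|f\|_{C^0}$ (Borell--TIS), which I would dispatch first.

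The main obstacle I expect is the second step: making rigorous the passage from "$\thr$ is Lipschitz" to the genuinely stronger statement that the Malliavin derivative $D_\cdot L$ is, with high probability, a function localized around a single non-degenerate saddle of $f$ and controlled in $L^1(\calT)$ by $\int_\calT|q|$ rather than just by $\|q\|_{L^2}\sqrt{|\calT|}$. This requires the almost-sure Morse-theoretic picture (genericity of the configuration at the threshold level, non-degeneracy of the saddle), an implicit-function-theorem computation of $\partial_\ell$ of the topology of $\{f\ge -\ell\}$, uniform integrability to upgrade a.s. statements to $L^1$/$L^2$ bounds, and care with the low-regularity ($q\in C^\nu$) hypothesis — which is presumably why the excerpt devotes separate subsections to "perfect Morse functions", the white-noise representation, and approximation arguments. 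Everything else (OU semigroup, hypercontractivity, the final optimization) is standard once these structural inputs are in place.
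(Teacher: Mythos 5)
Your proposal runs on the same engine as the paper --- Talagrand's $L^1$-versus-$L^2$ hypercontractive improvement of Poincar\'e, combined with the Morse-theoretic fact that the derivative of $\thr$ in direction $v$ is simply $v$ evaluated at the saddle --- but you take a genuinely different route to deploy it. The paper never reruns the semigroup argument: it quotes the finite-dimensional Gaussian Talagrand inequality of Cordero-Erausquin--Ledoux as a black box and manufactures the "coordinates" $k$ by discretizing the white noise on a lattice $\Lambda_\eps$, so that the Cameron--Martin basis consists of the translates $q(\cdot-z)$, the $k$-th coordinate derivative of $\thr(f_\eps)$ is exactly $q(\sad(f_\eps)-z_k)$ (note: no Hessian factor --- first-order perturbation of a critical value is just evaluation of the perturbation there, so your "$/|\nabla^2 f(x_0)|$-stuff" should not appear), and $\Lambda_\eps$-periodicity plus stationarity make all coordinates exchangeable with $\sad(f)$ uniform on $\calT$, whence $\sqrt{\E[q(\sad(f))^2]}\,/\,\E[|q(\sad(f))|]=\sigma\sqrt{|\calT|}/\|q\|_{L^1}$ exactly; the bulk of the work is then the approximation machinery (genericity of the perfect-Morse condition, continuity of $\sad$, convergence of the discrete white noise) needed to pass $\eps\to 0$ and relax $q$ to $C^\nu$. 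Your continuum Malliavin route is viable in principle but has one piece of loose bookkeeping you should fix: the hypercontractive step needs, for each fixed coordinate (point $x$), the ratio of the $L^2(\gamma)$ to the $L^1(\gamma)$ norm of $D_xL$ \emph{over the probability space}, so "integrating in $x$ first" is not the quantity that enters; you are rescued only because stationarity makes $\E_\omega[|D_xL|]$ independent of $x$ and equal to $\|q\|_{L^1}/|\calT|$ times the normalization, which is precisely the observation the paper's lattice construction is built to exploit. Your diagnosis of the main obstacle --- making the saddle-point formula for the derivative rigorous under low regularity --- matches where the paper spends its effort; the trade-off is that your route must additionally justify a continuum version of Talagrand's inequality, whereas the paper's reduction to the finite-dimensional statement avoids that at the cost of the approximation lemmas.
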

Note that if $q$ is positive valued, then $\int_\calT |q|=\int_\calT q=\br{\int_\calT \kappa}^{1/2}$ so that the argument in the logarithm is just $\sigma\sqrt{\frac{1}{|\calT|}\int_\calT\kappa(x)dx}$. Also, if $\calT$ is the standard torus and $\sigma=1$, Theorem \ref{t:main_formula} shows there exists $C=C(d)<+\infty$ such that
\[
\var\br{\thr(f)}\leq \frac{C}{1+\left|\log(\|q\|_{L^1(\T^d)})\right|}\, .
\]
In particular, if $q$ is very concentrated so that $\|q\|_{L^1}\ll 1$ while $\|q\|_{L^2}=1$, the variance of $\thr(f)$ is small.

\subsection{A Gaussian Talagrand inequality}\label{ss:talagrand}

In order to prove Theorem \ref{t:main_formula}, we will apply the following Gaussian Talagrand inequality to $\thr$.

\begin{theorem}[Gaussian Talagrand inequality, see \cite{cl_2012}]\label{t:talagrand}
Let $\gamma$ be the standard Gaussian measure on $\R^n$. Let $F\in C^\infty_c(\R^n)$. Then,
\begin{equation}\label{e:talagrand}
\textup{Var}_\gamma(F)\leq C\|\nabla F\|_{L^2(\gamma)}^2\max_k\brb{1+\ln\br{\frac{\|\partial_k F\|_{L^2(\gamma)}}{\|\partial_k F\|_{L^1(\gamma)}}}}^{-1}
\end{equation}
where $\textup{Var}_\gamma(F)=\int F(x)^2d\gamma(x)-\br{\int F(x)d\gamma(x)}^2$.
\end{theorem}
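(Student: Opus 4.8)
\textbf{Proof plan for Theorem \ref{t:talagrand}.}

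The plan is to deduce the Gaussian Talagrand inequality from its classical counterpart on the discrete cube via a central limit / invariance-principle argument, following the strategy of Cordero-Erausquin and Ledoux in \cite{cl_2012}; I will sketch the self-contained route. First I would recall the sharp form of Talagrand's original inequality on $\{-1,1\}^N$ with the uniform measure $\mu_N$: for $h:\{-1,1\}^N\to\R$,
\[
\var_{\mu_N}(h)\leq C\sum_{i=1}^N \frac{\|\Delta_i h\|_{L^2(\mu_N)}^2}{1+\ln\br{\|\Delta_i h\|_{L^2(\mu_N)}/\|\Delta_i h\|_{L^1(\mu_N)}}}\, ,
\]
where $\Delta_i h$ is the discrete derivative in the $i$-th coordinate. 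The Gaussian statement should then follow by a tensorization/approximation scheme: realize the standard Gaussian vector in $\R^n$ as a limit of normalized sums of $\pm1$ variables, i.e. set $x_j^{(N)}=\frac{1}{\sqrt N}\sum_{i=1}^N\eps_{j,i}$ for $j=1,\dots,n$ and $\eps_{j,i}$ i.i.d.\ Rademacher, and apply the discrete inequality to $h_N(\eps)=F(x^{(N)})$.

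The second step is to track how each term in the discrete inequality behaves as $N\to\infty$. The discrete derivative $\Delta_{j,i}h_N$ is, up to $O(N^{-1})$ corrections from a Taylor expansion of $F\in C^\infty_c$, equal to $\frac{1}{\sqrt N}\partial_j F(x^{(N)})$; hence $\|\Delta_{j,i}h_N\|_{L^p}\approx N^{-1/2}\|\partial_j F\|_{L^p(\gamma_N)}$ where $\gamma_N$ is the law of $x^{(N)}$, which converges to $\gamma$ by the CLT (and the integrals converge because $\partial_j F$ is bounded and continuous, so one may invoke the portmanteau theorem together with uniform integrability from compact support). Crucially, the ratio $\|\Delta_{j,i}h_N\|_{L^2}/\|\Delta_{j,i}h_N\|_{L^1}$ tends to $\|\partial_j F\|_{L^2(\gamma)}/\|\partial_j F\|_{L^1(\gamma)}$, a quantity that does \emph{not} vanish, so the logarithmic denominator stabilizes. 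Summing over $i$ (there are $N$ such terms, each of order $1/N$) and over $j$, the right-hand side converges to
\[
C\sum_{j=1}^n\frac{\|\partial_j F\|_{L^2(\gamma)}^2}{1+\ln\br{\|\partial_j F\|_{L^2(\gamma)}/\|\partial_j F\|_{L^1(\gamma)}}}\, ,
\]
which is dominated by $C\|\nabla F\|_{L^2(\gamma)}^2\max_k[1+\ln(\|\partial_kF\|_{L^2(\gamma)}/\|\partial_kF\|_{L^1(\gamma)})]^{-1}$ after pulling the smallest denominator out. Meanwhile $\var_{\mu_N}(h_N)=\var_{\gamma_N}(F)\to\var_\gamma(F)$ by the same weak-convergence argument. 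Combining the two limits yields \eqref{e:talagrand}.

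The main obstacle is controlling the logarithmic terms uniformly in $N$: if for some coordinate $\partial_jF$ were, say, identically signed on the support of $\gamma_N$, one must ensure $\|\Delta_{j,i}h_N\|_{L^1}$ does not collapse relative to $\|\Delta_{j,i}h_N\|_{L^2}$ at a rate that would blow up the denominator or make it negative; this is handled because $\partial_jF$ is a fixed smooth compactly supported function, so both norms converge to strictly positive limits (unless $\partial_jF\equiv0$, in which case that term is simply absent) and the ratio is bounded above and below for $N$ large. A secondary technical point is justifying the Taylor remainder estimates: one needs $\|\Delta_{j,i}h_N-N^{-1/2}\partial_jF(x^{(N)})\|_{L^2}=O(N^{-1})$ uniformly, which follows from $\|F''\|_\infty<\infty$ and a second-order expansion, and these $O(N^{-1})$ errors contribute $O(N^{-1})$ to the sum after multiplication by $N$, hence vanish. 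Once these estimates are in place, the passage to the limit is routine. (Alternatively, one can argue intrinsically in the Gaussian setting using semigroup interpolation along the Ornstein-Uhlenbeck flow, estimating $\int_0^\infty e^{-t}\,\E[\langle\nabla P_tF,\nabla F\rangle]\,dt$ with a level-set decomposition of $\nabla F$ to produce the logarithmic gain, as in the hypercontractive proof of Talagrand's inequality; the CLT route is presented here for concreteness.)
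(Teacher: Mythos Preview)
The paper does not give its own proof of Theorem \ref{t:talagrand}: the result is simply quoted from \cite{cl_2012} and used as a black box, so there is no argument in the paper to compare against. Your CLT-from-the-cube sketch is a sound route to the inequality and the error analysis you outline (Taylor remainder $O(N^{-1})$ per coordinate, $N$ copies summing to $O(1)$, ratios of $L^2/L^1$ norms stabilizing to positive limits) is correct for $F\in C^\infty_c$. The alternative you mention at the end---semigroup interpolation along the Ornstein--Uhlenbeck flow combined with hypercontractivity---is in fact the argument actually carried out in \cite{cl_2012}; that intrinsic approach has the advantage of avoiding the somewhat delicate bookkeeping of the discrete-to-continuous passage and of extending naturally to other log-concave measures, whereas your CLT route is more elementary and makes the link to Talagrand's original Boolean inequality explicit.
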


Observe that by density of $C^\infty_c$ in $L^p(\gamma)$ Sobolev spaces (for $p=1\text{ and }2$), we can extend this inequality to the case where $F$ is merely Lipschitz on $\R^n$. In particular, if we restrict $\thr$ to a finite dimensional Gaussian space in $C^0(\calT)$, we can indeed apply this inequality to it.

\subsection{Perfect Morse functions and the derivative of $\thr$}\label{ss:perfect_morse}

The derivatives of $\thr$ admit a simple geometric description as long as we restrict $\thr$ to a certain generic class of functions, which we now describe. For each $u\in C^1(\calT)$, let $\crit(u)$ be the set of its critical points. For each $u\in C^2(\calT)$ and each $x\in\crit(u)$, we denote by $H_xu\in\textup{End}(T_x\calT)$ the Hessian of $u$ at $x$. Let $\calM$ be the set of \textbf{perfect Morse functions} on $\calT$, that is, the space of $u\in C^2(\calT)$ such that:
\begin{itemize}
\item For each $x\in\crit(u)$, $H_xu$ is non-degenerate.
\item For each $x,y\in\crit(u)$ distinct, $u(x)\neq u(y)$.
\end{itemize}
This space is open and dense in $C^2(\calT)$. Now, for each $u\in\calM$, by standard Morse theory arguments (see \cite{milnor}), since $\calA$ is admissible (see Definition \ref{d:admissible_set}), the threshold $\thr(u)$ is reached at exactly one critical point of $u$ which we denote by $\sad(u)$. Thus we have defined the \textbf{saddle map}:
\[
\sad:\calM\rightarrow\calT\, .
\]
It turns out that, when restricted to $\calM$, the differential of the map $\thr$ has a simple description:
\begin{lemma}\label{l:threshold}
Let $u\in\calM$ and $v\in C^2(\calT)$. Then\footnote{Our proof shows that the $o$ is uniform in $v$ for $\|v\|_{C^2(\calT)}\leq 1$ but this is not used anywhere.}, as $t\rightarrow 0$, $\thr(u+tv)=\thr(u)+tv(\sad(u))+o(t)$.
\end{lemma}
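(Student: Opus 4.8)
The plan is to exploit the fact that, for $u\in\calM$, the threshold is attained at a single nondegenerate critical point, and to track how that critical point and the critical value move under the perturbation $u+tv$. First I would observe that $\calM$ is open in $C^2(\calT)$, so for $t$ small the function $u_t:=u+tv$ is still a perfect Morse function; moreover, since the (finitely many) critical points of $u$ are nondegenerate, the implicit function theorem applied to $\nabla u_t=0$ produces, for each critical point $x_i$ of $u$, a $C^1$ path $t\mapsto x_i(t)$ of critical points of $u_t$ with $x_i(0)=x_i$ and $\dot x_i(0)=-(H_{x_i}u)^{-1}\nabla_{x_i}v$. For $t$ small these are all the critical points of $u_t$, and the critical values $u_t(x_i(t))$ depend continuously on $t$; since at $t=0$ the value at $\sad(u)$ is isolated among critical values and $\thr(u)$ is characterized (by the Morse-theoretic input already invoked in the text to define $\sad$) as the unique critical value at which the topology of $\{u\geq -\ell\}$ changes from $\calA^c$ to $\calA$, the same critical point $x_{i_0}=\sad(u)$ remains the one realizing $\thr(u_t)$ for $t$ small. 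Hence $\thr(u+tv)=-u_t(x_{i_0}(t))$ — with the sign convention that $\thr(u)=-u(\sad(u))$, i.e. $u(\sad(u))=-\thr(u)$ — wait: from the definition $\thr(u)$ is the infimum of $\ell$ with $u+\ell\in\calA$, and $u+\ell\in\calA$ corresponds to the saddle value crossing $0$, i.e. $u(\sad(u))+\thr(u)=0$. So $\thr(u+tv)=-(u+tv)(x_{i_0}(t))=-u(x_{i_0}(t))-tv(x_{i_0}(t))$.

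Next I would Taylor-expand. Since $x_{i_0}$ is a critical point of $u$, $\nabla_{x_{i_0}}u=0$, so $u(x_{i_0}(t))=u(x_{i_0})+O(t^2)=-\thr(u)+o(t)$; and $v(x_{i_0}(t))=v(x_{i_0})+O(t)=v(\sad(u))+o(1)$, so $tv(x_{i_0}(t))=tv(\sad(u))+o(t)$. Combining, $\thr(u+tv)=\thr(u)+tv(\sad(u))+o(t)$, as claimed. The uniformity in $v$ over $\|v\|_{C^2}\leq 1$ mentioned in the footnote follows because all the implicit-function-theorem estimates (the domain of validity of $t$, the bound on $\dot x_i$, the $O(t^2)$ and $O(t)$ constants) can be taken uniform in such $v$, using that $u$ is fixed and its Hessians at critical points are uniformly invertible.

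The main obstacle — and the step deserving the most care — is justifying that the \emph{same} critical point continues to realize the threshold for all small $t$, i.e.\ that $\sad(u_t)=x_{i_0}(t)$. This is where admissibility of $\calA$ enters essentially: one must argue that as $\ell$ increases past a critical value $c_i(t):=u_t(x_i(t))$, the homeomorphism type of the pair $(\calT,\{u_t\geq-\ell\})$ changes in the same way as it does for $u$ at $c_i$ — this is the standard Morse-theory handle-attachment statement, with the relevant isotopies supplied by the gradient flow of $u_t$ — so that the unique $c_i$ at which $\{u\geq-\ell\}$ enters $\calA$ corresponds, after perturbation, to $c_{i_0}(t)$. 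Because $\calA$ is a topological threshold set (invariant under ambient isotopy of sublevel-type sets) and increasing, the index $i_0$ is locally constant in $t$, which is what we need. I would also need to note that the critical values $c_i(t)$ stay distinct for small $t$ (again by continuity, since they are distinct at $t=0$), so there is no ambiguity in "the critical value at which $\calA$ is entered." Everything else is routine calculus and the implicit function theorem.
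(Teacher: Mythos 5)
Your argument is essentially the paper's: both proofs apply the implicit function theorem to $\nabla(u+tv)=0$ at each nondegenerate critical point of $u$ to obtain differentiable paths $t\mapsto x_i(t)$ of critical points of $u+tv$ with separated critical values, and then reduce the lemma to identifying which of these paths carries the threshold. The one genuine difference is how that identification is made. You propose to re-run the Morse-theoretic handle-attachment analysis for $u_t$ and argue from admissibility that the index $i_0$ at which $\calA$ is entered is locally constant in $t$; this can be made to work, but it is the heaviest step of your write-up and is stated rather loosely (``the homeomorphism type changes in the same way''). The paper sidesteps it with a soft argument: since $u+tv\in\calM$, the quantity $\thr(u+tv)$ is already known (from the Morse-theoretic fact used to define $\sad$) to be realized at one of the critical points of $u+tv$; since $\thr$ is $1$-Lipschitz in $C^0$, $\thr(u+tv)\to\thr(u)$ as $t\to 0$; and since the critical values of $u+tv$ stay in disjoint neighborhoods of the distinct critical values of $u$, the only admissible candidate is the value $a_t$ emanating from $\sad(u)$. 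This buys a shorter proof and yields the continuity of $\sad$ (Lemma \ref{l:saddle_continuity}) as an immediate by-product. One caution on your last step: from your own convention $\thr(u)=-u(\sad(u))$, the chain $\thr(u+tv)=-u(x_{i_0}(t))-tv(x_{i_0}(t))$ literally gives $\thr(u)-tv(\sad(u))+o(t)$, not $+tv(\sad(u))$; the paper's proof identifies $\thr(u+tv)$ with $a_t=(u+tv)(x_t)$ rather than $-a_t$ and so carries the same sign/convention slip, but you should make your final line consistent with the convention you adopted.
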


Combining Theorem \ref{t:talagrand} and Lemma \ref{l:threshold}, we immediately get the following result:

\begin{proposition}\label{p:main}
There exists an absolute constant $C<+\infty$ such the following holds. Let $f$ be a centered Gaussian field with covariance $K$ and a finite dimensional Cameron-Martin space\footnote{See Appendix \ref{s:appendix} for a precise definition of the Cameron-Martin space.} $H\neq\{0\}$. Let $(\psi_k)_k$ be an orthogonal basis of $H$. Assume that $f\in\calM$ a.s. (which implies that $H\subset C^2(\calT)$). Then,
\[
\var(\thr(f))\leq C\E\brb{K\br{\sad(f),\sad(f)}}\max_k\brb{1+\ln\br{ \frac{\sqrt{\E\brb{\psi_k\br{\sad(f)}^2}}}{\E\brb{\left|\psi_k\br{\sad(f)}\right|}}}}^{-1}\, .
\]
\end{proposition}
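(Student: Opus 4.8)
The plan is to combine Lemma \ref{l:threshold} with the (extended, Lipschitz) version of Theorem \ref{t:talagrand} applied in a suitable finite-dimensional Gaussian coordinate system, and then translate every quantity appearing in \eqref{e:talagrand} into the intrinsic language of the field $f$ and the saddle map $\sad$. First I would fix an orthogonal basis $(\psi_k)_k$ of the Cameron--Martin space $H$, normalized to be orthonormal, so that $f=\sum_k X_k\psi_k$ with $(X_k)_k$ i.i.d.\ standard Gaussians; this realizes the law of $f$ as the pushforward of the standard Gaussian measure $\gamma$ on $\R^n$ (with $n=\dim H$) under the linear map $x\mapsto\sum_k x_k\psi_k$. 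Composing with $\thr$ gives a function $F(x)=\thr\!\left(\sum_k x_k\psi_k\right)$ on $\R^n$, which is Lipschitz because $\thr$ is $1$-Lipschitz in $C^0(\calT)$ and $x\mapsto\sum_k x_k\psi_k$ is bounded from $\R^n$ (with the Euclidean norm) into $C^0(\calT)$. So the extended Talagrand inequality applies to $F$, and since the pushforward of $\gamma$ under this map has the law of $f$, we get $\var(\thr(f))=\var_\gamma(F)$ bounded by the right-hand side of \eqref{e:talagrand}.

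The second step is to identify the partial derivatives $\partial_k F$. Since $f\in\calM$ almost surely and $\calM$ is open, for $\gamma$-a.e.\ $x$ the function $\sum_j x_j\psi_j$ lies in $\calM$, and Lemma \ref{l:threshold} (applied with $u=\sum_j x_j\psi_j$ and $v=\psi_k$) gives that $F$ is differentiable there with $\partial_k F(x)=\psi_k(\sad(\sum_j x_j\psi_j))$. Hence, under the identification of $\gamma$ with the law of $f$, $\partial_k F$ corresponds to the random variable $\psi_k(\sad(f))$. This immediately yields $\|\partial_k F\|_{L^2(\gamma)}^2=\E[\psi_k(\sad(f))^2]$ and $\|\partial_k F\|_{L^1(\gamma)}=\E[|\psi_k(\sad(f))|]$, which are exactly the quantities inside the logarithm in the statement. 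For the gradient term, $\|\nabla F\|_{L^2(\gamma)}^2=\sum_k\|\partial_k F\|_{L^2(\gamma)}^2=\sum_k\E[\psi_k(\sad(f))^2]=\E\!\left[\sum_k\psi_k(\sad(f))^2\right]$, and since $(\psi_k)_k$ is an orthonormal basis of $H$, the reproducing-kernel identity $\sum_k\psi_k(x)\psi_k(y)=K(x,y)$ gives $\sum_k\psi_k(\sad(f))^2=K(\sad(f),\sad(f))$, so $\|\nabla F\|_{L^2(\gamma)}^2=\E[K(\sad(f),\sad(f))]$. Substituting into \eqref{e:talagrand} produces precisely the claimed bound, with the same absolute constant $C$. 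If the $\psi_k$ are only orthogonal rather than orthonormal one rescales: writing $\psi_k=\lambda_k\hat\psi_k$ with $\hat\psi_k$ orthonormal, the ratio $\|\partial_k F\|_{L^2}/\|\partial_k F\|_{L^1}$ inside the logarithm is scale-invariant, and the gradient sum $\sum_k\lambda_k^2\hat\psi_k(\sad(f))^2$ still equals $K(\sad(f),\sad(f))$, so the statement is insensitive to the normalization of the basis.

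The main technical obstacle is the a.e.\ differentiability of $F$ together with the justification that the pointwise derivative given by Lemma \ref{l:threshold} is genuinely the weak (Sobolev) derivative needed to invoke the extended Talagrand inequality — i.e.\ that $F$, being Lipschitz, has its classical derivative (which exists a.e.\ by Rademacher) agree with the expression $\psi_k(\sad(f))$ wherever the latter is defined. Since $\calM$ has full measure and Lemma \ref{l:threshold} gives differentiability of $F$ at every point of $\gamma^{-1}$-image of $\calM$ in every coordinate direction, this is a routine matter: a locally Lipschitz function that is partially differentiable at a.e.\ point has those partials as its distributional derivatives. A secondary point to check is that $\thr(f)$ is square-integrable, which follows from Talagrand's bound itself together with Gaussian concentration of the $1$-Lipschitz functional $\thr$, or more simply from the fact that $\thr(f)$ is a $1$-Lipschitz function of the Gaussian $f$ hence sub-Gaussian. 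Everything else is bookkeeping.
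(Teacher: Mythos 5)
Your proof is correct and follows exactly the route the paper intends (the paper merely states that the proposition follows ``immediately'' from Theorem \ref{t:talagrand} and Lemma \ref{l:threshold}): pass to orthonormal coordinates so that the law of $f$ is the pushforward of the standard Gaussian, identify $\partial_k F$ with $\psi_k(\sad(f))$ via Lemma \ref{l:threshold} on the full-measure set where the sample lies in $\calM$, and use the reproducing-kernel identity for the gradient term. The only blemish is the identity $\sum_k\lambda_k^2\hat\psi_k(\sad(f))^2=K(\sad(f),\sad(f))$ in your normalization remark, which should read $\sum_k\hat\psi_k(\sad(f))^2=K(\sad(f),\sad(f))$; this is harmless, since the argument is carried out in the orthonormal basis and the ratio inside the logarithm is scale-invariant.
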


\subsection{The discrete white noise approximation}\label{ss:white_noise}

To derive Theorem \ref{t:main_formula} from Proposition \ref{p:main}, we must choose the right basis $(\psi_k)_k$. Our choice, which we explain below, is inspired by a similar construction from \cite{mv_18}. For $\eps\in\{\frac{1}{k} \st k\in\N,\ k\geq 1\}=\calE$, let $\Lambda_\eps$ be the set of points with coordinates in $2\pi\eps R_1\Z\times\dots\times 2\pi\eps R_d\Z$ and for each $z\in\Lambda_\eps$, let $Q_z^\eps=z+[0,2\pi\eps R_1]\times\dots [0,2\pi \eps R_d]$. Let $W$ be the $L^2$ white noise on $\calT$. For each $z\in\Lambda_\eps$, let $W^\eps_z=\langle W,\un_{Q_z^\eps}\rangle$. We define the an approximation of $W$ as follows:
\begin{equation}\label{e:approximate_white_noise}
W_\eps:=\sum_{z\in\Lambda_\eps} W_z^\eps\delta_z\, .
\end{equation}
In particular for each function $q\in C^0(\calT)$ and each $x\in\calT$,
\begin{equation}\label{e:approximation_expression}
W_\eps*q(x)=\sum_{z\in\Lambda_\eps} W^\eps_z q(x-z)\, .
\end{equation}
Note that since for each $z,z'\in\Lambda_\eps$ distinct, $\int\un_{Q_z^\eps}(x)\un_{Q_{z'}^\eps}(x)dx=\delta_{z,z'}\eps^d|\calT|$, the collection $(W_z^\eps)_{z\in\Lambda_\eps}$ is a collection of i.i.d. centered normals of variance $\eps^d|\calT|$.
\begin{lemma}\label{l:white_noise_approximation}
Let $l\in\N$ and $\nu\in]0,1[$ and let $q\in C^{l+1,\nu}(\calT)$. Let $f=q*W$ and for each $\eps\in \calE$ $f_\eps=q*W_\eps$. Then, for each $p\in[1,+\infty[$, as $\eps\rightarrow 0$, $\E\brb{\|f_\eps-f\|_{C^l}^p}\rightarrow 0$. In particular, $f_\eps$ converges in law to $f$ in the $C^l$ topology as $\eps\rightarrow 0$.
\end{lemma}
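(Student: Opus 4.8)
The plan is to reduce everything to an $L^2$-type estimate on each fixed derivative $\partial^\alpha(f_\eps - f)$, leveraging the fact that $f_\eps - f$ is itself a Gaussian field on $\calT$, so all $L^p(\prob)$ norms of $\|f_\eps - f\|_{C^l}$ are comparable once we control, say, the supremum in probability. First I would write, using \eqref{e:approximation_expression}, $f_\eps(x) = \sum_{z\in\Lambda_\eps}\langle W,\un_{Q_z^\eps}\rangle\,q(x-z) = \langle W, h_\eps^x\rangle$ where $h_\eps^x(y) = \sum_{z}\un_{Q_z^\eps}(y)\,q(x-z)$ is the step-function approximation of $y\mapsto q(x-y)$ that is constant on each cube $Q_z^\eps$ and equal to $q(x-z)$ there; similarly $f(x) = \langle W, q(x-\cdot)\rangle$. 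Hence $f_\eps(x) - f(x) = \langle W, h_\eps^x - q(x-\cdot)\rangle$ is a centered Gaussian with variance $\|h_\eps^x - q(x-\cdot)\|_{L^2(\calT)}^2$, and since $q\in C^{l+1,\nu}$, on each cube $Q_z^\eps$ of diameter $O(\eps)$ we have $|q(x-y) - q(x-z)| \leq \|\nabla q\|_{C^0}\,O(\eps)$, so this variance is $O(\eps^2)$ uniformly in $x$. The same computation applies to $\partial^\alpha f_\eps - \partial^\alpha f = \langle W, \partial^\alpha_x h_\eps^x - \partial^\alpha q(x-\cdot)\rangle$ for $|\alpha|\leq l$ (differentiating under the pairing, legitimate since $q\in C^{l+1,\nu}$ so $\partial^\alpha q \in C^{1,\nu}$), giving variance $O(\eps^2)$ uniformly in $x$; one extra derivative of Hölder regularity is needed precisely to get the Lipschitz bound $|\partial^\alpha q(x-y) - \partial^\alpha q(x-z)| = O(\eps)$.

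Next I would upgrade the pointwise variance bound to a $C^l$-norm bound. For this I need a bound not just on each $\partial^\alpha(f_\eps - f)(x)$ but on its modulus of continuity in $x$, so as to invoke a Kolmogorov–Chentsov / Dudley-type estimate or a Gaussian chaining argument (of the kind packaged in Lemma \ref{l:basic_regularity} and the appendix) to control $\E[\sup_x |\partial^\alpha(f_\eps-f)(x)|^p]$. Concretely, for $|\alpha|\leq l$ the field $g^\alpha_\eps := \partial^\alpha f_\eps - \partial^\alpha f$ is Gaussian, and its canonical metric satisfies $\E[(g^\alpha_\eps(x) - g^\alpha_\eps(x'))^2] = \|\partial^\alpha_x h_\eps^x - \partial^\alpha q(x-\cdot) - \partial^\alpha_{x'} h_\eps^{x'} + \partial^\alpha q(x'-\cdot)\|_{L^2}^2$. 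I would bound this by $C\min(\eps^2, |x-x'|^{2\nu'})$ for a suitable $\nu' \in (0,\nu)$: the $\eps^2$ factor from the variance bound above, and the Hölder bound $\lesssim |x-x'|^{2\nu}$ from the $C^{l+1,\nu}$ regularity of $q$ (the difference of the two translated step approximations, minus the difference of the two smooth translates, is controlled by the modulus of continuity of $\partial^{\alpha}q$ and its gradient). Feeding $d(x,x')^2 \lesssim \min(\eps^2, |x-x'|^{2\nu})$ into the standard Gaussian supremum estimate on the compact manifold $\calT$ — Dudley's entropy bound, or equivalently the Garsia–Rodemich–Rumsey inequality — yields $\E[\sup_x |g^\alpha_\eps(x)|^p] \leq C_p\,\eps^{p}\,|\ln\eps|^{Cp} \to 0$ as $\eps\to 0$, where the logarithmic loss is harmless. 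Summing over the finitely many $\alpha$ with $|\alpha|\leq l$ gives $\E[\|f_\eps - f\|_{C^l}^p] \to 0$. Convergence in law in the $C^l$ topology is then immediate, since $L^p(\prob)$-convergence of the $C^l$-norm implies convergence in probability in $C^l(\calT)$, hence in distribution.

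The main obstacle is the second step: the pointwise Gaussian computation is routine, but passing to the sup-norm over $\calT$ requires a uniform chaining bound with the right $\eps$-dependence, and one must be careful that the metric entropy estimate is applied with the truncated metric $\min(\eps,|x-x'|^{\nu})$ rather than either piece alone — using only $|x-x'|^\nu$ loses the decay in $\eps$, while using only $\eps$ gives no control of increments. I expect the cleanest route is to quote the Hölder-space embedding / chaining machinery already set up in Lemma \ref{l:basic_regularity} and Appendix \ref{s:appendix}, applied to the Gaussian field $f_\eps - f$ with its explicitly small covariance, rather than redoing Kolmogorov's criterion by hand; the appendix presumably already records that a centered Gaussian field whose increments satisfy $\E[(X(x)-X(x'))^2] \leq \delta^2 |x-x'|^{2\nu'}$ has $\E[\|X\|_{C^0}^p]^{1/p} \lesssim_{p,\nu'} \delta$, and it is this statement, applied to each $\partial^\alpha(f_\eps - f)$ with $\delta \asymp \eps^{1-}$, that closes the argument.
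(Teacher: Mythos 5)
Your proposal is correct and follows essentially the same route as the paper: compute the pointwise variance of $\partial^\alpha(f_\eps-f)$ via the $L^2$ distance between $q(x-\cdot)$ and its piecewise-constant approximation (giving $O(\eps^2)$ from $q\in C^{1}$), control the canonical metric using the $C^{l+1,\nu}$ regularity, and then invoke exactly the packaged chaining/Borell--TIS machinery of Lemma \ref{l:sup_lemma} to pass to the $C^l$-supremum and to all $L^p(\prob)$ moments. The only cosmetic difference is that the paper obtains the product bound $\|q\|_{C^{1,\nu}}^2\,\textup{dist}(x,x')^{2\nu}\eps^2$ on the increments directly (so $M_{K_\eps,0,\nu}=O(\eps)$ and no truncated metric $\min(\eps,|x-x'|^{\nu})$ is needed), and reduces to $l=0$ by induction rather than treating each $\partial^\alpha$ separately.
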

We will actually only use this lemma for $q\in C^\infty(\calT)$. The expression of $f_\eps$ shows that, as long as the family $(q(\cdot-z))_{z\in\Lambda_\eps}$ is independent, it forms an orthogonal basis of its Cameron-Martin space (see Lemma \ref{l:CM_basis}). In the following lemma, we check that this condition is generic.

\begin{lemma}\label{l:generic_independence}
The set of functions $q\in C^\infty(\calT)$ such that for each $\eps\in\calE$, the family $(q(\cdot-z))_{z\in\Lambda_\eps}$ is independent, is dense in $C^\infty(\calT)$.
\end{lemma}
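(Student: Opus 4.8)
The plan is to show that independence of the family $(q(\cdot-z))_{z\in\Lambda_\eps}$ is equivalent to the linear independence of a finite set of functions inside the (separable) Hilbert space attached to the white noise, and then argue that failure of linear independence is a measure-zero / nowhere-dense phenomenon, stable under countable intersection over $\eps\in\calE$. First I would recall that for $q=q*W$ on the torus, the Cameron--Martin space is spanned by the translates $q(\cdot-z)$, and the Gram matrix of the centered Gaussian vector $(q*W(x_i))_i$ — or more precisely of the random variables $\langle W, q(\cdot - z)\rangle$, $z\in\Lambda_\eps$ — has entries $\int_\calT q(x-z)q(x-z')\,dx = (q\star \tilde q)(z-z')$ where $\tilde q(x)=q(-x)$. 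Thus the family $(q(\cdot-z))_{z\in\Lambda_\eps}$ is independent if and only if this (finite, symmetric, positive semidefinite) Gram matrix is non-degenerate, i.e. if and only if the functions $\{q(\cdot-z)\}_{z\in\Lambda_\eps}$ are linearly independent in $L^2(\calT)$.

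Next I would fix $\eps\in\calE$ and show the set $G_\eps\subset C^\infty(\calT)$ of $q$ for which $\{q(\cdot - z)\}_{z\in\Lambda_\eps}$ is linearly independent is dense (in fact residual) in $C^\infty(\calT)$. The clean way: pass to Fourier coefficients. Since $\Lambda_\eps$ is a finite subgroup of $\calT$ (it has $N_\eps := \eps^{-d}$ elements), the translates $q(\cdot - z)$, $z \in \Lambda_\eps$, are linearly dependent precisely when, grouping the Fourier modes of $q$ into cosets of the annihilator $\Lambda_\eps^\perp$ in the dual lattice, there is a nonzero vector in $\C^{\Lambda_\eps}$ orthogonal (via the characters $e^{i\langle \xi, z\rangle}$) to the support pattern of $\widehat q$. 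Concretely, a linear dependence $\sum_z c_z q(\cdot - z) = 0$ forces, for every frequency $\xi$ with $\widehat q(\xi)\neq 0$, the relation $\sum_z c_z e^{-i\langle\xi,z\rangle}=0$; and conversely. So independence holds as soon as the frequencies at which $\widehat q\neq 0$ hit enough distinct characters of the finite group $\Lambda_\eps$ — for instance, as soon as $\widehat q$ is nonzero on at least one full system of coset representatives of $\Lambda_\eps^\perp$, equivalently on $N_\eps$ frequencies whose images in $\widehat\calT/\Lambda_\eps^\perp$ are all distinct. This is an open, dense condition: given any $q\in C^\infty(\calT)$ and $\delta>0$, I add to $q$ a smooth function whose Fourier transform is a tiny bump supported on any missing coset representative, yielding $q'\in C^\infty(\calT)$ with $\|q'-q\|_{C^k}<\delta$ for all $k$ up to a chosen truncation (and one can genuinely get $C^\infty$-closeness by choosing the perturbation's frequencies large and amplitudes summably small), such that $(q'(\cdot-z))_{z\in\Lambda_\eps}$ is independent. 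Hence $G_\eps$ is dense, and it is open because non-degeneracy of the Gram matrix (a polynomial in finitely many Fourier coefficients, which depend continuously on $q$ in $C^0$, hence in $C^\infty$) is an open condition.

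Finally I would take the countable intersection $\bigcap_{\eps\in\calE} G_\eps$. Each $G_\eps$ is open and dense in the Fréchet space $C^\infty(\calT)$, which is a Baire space, so the intersection is dense (indeed comeagre); this is exactly the claimed set. The one point requiring a little care — and the main (minor) obstacle — is making the perturbation argument truly land in $C^\infty$ rather than just in some finite $C^k$: I would handle this by perturbing with $\sum_j a_j \cos(\langle \xi_j, \cdot\rangle)$ (or the appropriate real combination of characters) where the $\xi_j$ are chosen to fill in the missing coset representatives of $\Lambda_\eps^\perp$ at scale $1/\eps$, finitely many of them, so the perturbation is a trigonometric polynomial and automatically $C^\infty$; its $C^k$ norm is controlled by $\sum_j |a_j|\,|\xi_j|^k$, which can be made $<\delta$ simultaneously for the finitely many $\eps$ under consideration at any given stage, and since for a fixed $\eps$ only finitely many frequencies are involved there is no convergence issue at all. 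A secondary bookkeeping point is the equivalence "independence of the Gaussian family $\iff$ linear independence of the translates in $L^2$", which follows directly from the description of the Cameron--Martin space in Lemma \ref{l:CM_basis} together with the fact that a Gaussian vector is non-degenerate iff its covariance (Gram) matrix is invertible.
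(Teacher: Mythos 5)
Your proof is correct, and it shares the paper's overall skeleton: for each $\eps\in\calE$ exhibit an open dense set of good $q$'s in $C^\infty(\calT)$ and intersect over the countable family $\calE$ via Baire's theorem in the Fréchet space $C^\infty(\calT)$. The density argument, however, is genuinely different. The paper works with the matrix of point values $\bigl(q(z-z')\bigr)_{z,z'\in\Lambda_\eps}$: non-degeneracy of this matrix is a \emph{sufficient} condition for linear independence of the translates (evaluate a vanishing linear combination at the lattice points), and the corresponding set is open and dense because $q\mapsto\bigl(q(z-z')\bigr)_{z,z'}$ is continuous and open onto a finite-dimensional space in which the degenerate locus is a proper algebraic subset. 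You instead give an \emph{exact} Fourier-analytic characterization: since $\Lambda_\eps$ is a finite subgroup whose characters are linearly independent, the translates are dependent precisely when the support of $\widehat q$ misses some coset of $\Lambda_\eps^\perp$ in the dual lattice, and you restore independence by adding a small trigonometric polynomial supported on the missing cosets. Your route is longer but more informative (it identifies exactly when independence fails), whereas the paper's is a two-line sufficient condition. One cosmetic slip: the Gram entries $(q\star\tilde q)(z-z')$ are not polynomials in finitely many Fourier coefficients of $q$, but they are manifestly continuous in $q$ for the $C^0$ norm, which is all that the openness of your sets requires.
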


\subsection{General approximation arguments}\label{ss:approximations}

To prove Theorem \ref{t:main_formula}, we apply Proposition \ref{p:main} to approximations of the field $f$. It will be useful to know that $\sad$ behaves well under approximations:

\begin{lemma}\label{l:saddle_continuity}
The map $\sad:\calM\rightarrow\calT$ is $C^2$-continuous. In particular, if $(f_i)_i$ is a sequence of a.s. $C^2$ Gaussian fields converging in law in the $C^2$ topology to a Gaussian field $f$ such that both $f$ and each $f_i$ are a.s. in $\calM$, then, $\sad(f_i)$ converges in law to $\sad(f)$ as $i\rightarrow+\infty$.
\end{lemma}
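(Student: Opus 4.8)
The plan is to first establish the pointwise $C^2$-continuity of $\sad$, and then deduce the convergence-in-law statement by a standard Skorokhod/continuous-mapping argument. For the continuity claim, fix $u\in\calM$ and a sequence $u_n\to u$ in $C^2(\calT)$; I want to show $\sad(u_n)\to\sad(u)$ in $\calT$. The key structural fact is that $\calM$ is open in $C^2(\calT)$ (stated in Subsection \ref{ss:perfect_morse}), so $u_n\in\calM$ for $n$ large and $\sad(u_n)$ is well-defined. Since perfect Morse functions on a compact manifold have finitely many critical points, and for $C^2$-close functions these critical points are close (this is the content of the quantitative inverse function theorem applied to $\nabla u_n$ near each nondegenerate zero of $\nabla u$, together with the fact that $u_n$ has no critical points far from $\crit(u)$ when $n$ is large), we get that $\crit(u_n)$ converges, as a finite set, to $\crit(u)$: each critical point of $u$ is the limit of exactly one critical point of $u_n$, and there are no others. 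First I would spell this out: write $\crit(u)=\{x_1,\dots,x_m\}$, pick disjoint neighborhoods $U_i\ni x_i$ on which $H u$ is uniformly invertible, use $C^1$-closeness of $\nabla u_n$ to $\nabla u$ to get a unique zero $x_i^{(n)}\in U_i$ of $\nabla u_n$ (with $x_i^{(n)}\to x_i$) and to get that $|\nabla u_n|$ is bounded below on $\calT\setminus\bigcup_i U_i$, hence no other critical points.

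The second ingredient identifies which critical point is $\sad$: by the definition of the saddle map, $\sad(u)$ is the unique critical point at which the threshold $\thr(u)$ is attained, and by admissibility / Morse theory this is characterized, say, as the critical point $x_i$ such that $x_i$ separates the sublevel-set topology in the way encoded by $\calA$ — equivalently it is the unique $x_i$ with the property that crossing its critical value changes membership in $\calA$. Concretely I would argue: the critical values $u_n(x_i^{(n)})$ converge to the distinct critical values $u(x_i)$, so for $n$ large they remain distinct and in the same order; and for each level $\ell$ away from the (finitely many) critical values of $u$, the sublevel sets $\{u_n\ge -\ell\}$ and $\{u\ge -\ell\}$ are isotopic for $n$ large (again by the quantitative Morse/gradient-flow argument, or simply because $\thr$ is continuous, which is already recorded in Subsection \ref{ss:topological_setup}). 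Hence $\thr(u_n)\to\thr(u)$, and since $\thr(u)$ is not a critical value of $u$ other than $u(\sad(u))$ itself... more carefully: $\thr(u_n)=u_n(\sad(u_n))$ lies in a small neighborhood of $\thr(u)=u(\sad(u))$, which is separated from all other critical values $u(x_j)$, $j\ne i_0$ (where $x_{i_0}=\sad(u)$); since $u_n(x_j^{(n)})\to u(x_j)$, the only critical point of $u_n$ whose value can equal $\thr(u_n)$ for large $n$ is $x_{i_0}^{(n)}$. Therefore $\sad(u_n)=x_{i_0}^{(n)}\to x_{i_0}=\sad(u)$, proving continuity.

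For the convergence-in-law statement, I would invoke the continuous mapping theorem: $f_i\to f$ in law in $C^2(\calT)$, the maps are a.s. valued in $\calM$, and $\sad$ is continuous on $\calM$; since $f\in\calM$ a.s. (so the set of discontinuity points of $\sad$, namely $C^2(\calT)\setminus\calM$ together with any $\calM$-points where I only proved continuity — but I proved it everywhere on $\calM$ — has $\P_f$-measure zero), the pushforwards $\sad_*(\mathrm{law}(f_i))$ converge weakly to $\sad_*(\mathrm{law}(f))$. One technical point to state carefully is that $\calT$ is compact so weak convergence on $\calT$ is unambiguous and the continuous mapping theorem applies verbatim.

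The main obstacle I anticipate is the first paragraph: making rigorous and clean the assertion that nondegenerate critical points and their critical values depend continuously on the $C^2$ (really $C^1$ suffices for the zeros) data, and that no spurious critical points appear. This is entirely standard — it is the quantitative inverse function theorem plus compactness — but it is the part where one must be a little careful about uniformity of the neighborhoods $U_i$ and the lower bound on $|\nabla u_n|$ outside them. Everything after that (tracking critical values, identifying $\sad$, and the continuous-mapping step) is routine once the critical point correspondence is in hand.
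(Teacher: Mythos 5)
Your proposal is correct and follows essentially the same route as the paper: the paper packages your first paragraph (persistence and continuous dependence of the nondegenerate critical points and their values under $C^2$-small perturbations, with no spurious critical points nearby) into its Lemma \ref{l:path}, and then identifies $\sad(u_n)$ exactly as you do, by combining the Lipschitz continuity of $\thr$ with the separation of the critical values of a perfect Morse function; the convergence-in-law statement is the continuous mapping theorem in both cases.
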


In order to apply this lemma, we need to define a class of fields which is both generic and stable, whose elements are a.s. perfect Morse functions. For each $l\in\N$, let $\Gamma_l(\calT)$ be the space of Gaussian measures on $C^l(\calT)$ equipped with the topology of weak-* convergence in $C^l$ topology.
\begin{definition}
Let $\Gamma_3^{nd}(\calT)$ be the set of $\gamma\in\Gamma_3(\calT)$ such that if $f$ is an a.s. $C^3$ Gaussian field on $\calT$ with measure $\gamma$, then $f$ satisfies the following properties:
\begin{itemize}
\item For each $x,y\in\calT$ distinct, the following vector is non-degenerate $(f(x),\nabla_xf,f(y),\nabla_yf)$.
\item For each $x\in\calT$, the following vector is non-degenerate $(f(x),\nabla_xf,H_xf)$.
\end{itemize}
\end{definition}
If the law of $f$ belongs to $\Gamma_3^{nd}(\calT)$, then $\sad(f)$ is a.s. well defined:

\begin{lemma}\label{l:nd_is_morse}
For all $\gamma\in\Gamma_3^{nd}(\calT)$, $\gamma(\calM)=1$.
\end{lemma}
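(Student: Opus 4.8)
The goal is to show that if $\gamma\in\Gamma_3^{nd}(\calT)$, then $\gamma$-almost every $f$ is a perfect Morse function, i.e., $f$ has only non-degenerate critical points and all critical values are distinct. The plan is to treat the two defining conditions of $\calM$ separately, deducing each from the corresponding non-degeneracy hypothesis in the definition of $\Gamma_3^{nd}(\calT)$ via an application of the Kac–Rice / Bulinskaya-type argument adapted to the torus.

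\begin{proof}
Let $f$ be an a.s. $C^3$ Gaussian field on $\calT$ with law $\gamma\in\Gamma_3^{nd}(\calT)$. We verify separately that a.s. every critical point of $f$ is non-degenerate and that a.s. $f$ has distinct critical values.

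\textbf{Step 1: a.s. no degenerate critical points.} Consider the map $F:\calT\to\R^d$, $F(x)=\nabla_x f$, which is a.s. $C^2$. At a degenerate critical point $x$ we have $F(x)=0$ and $\det(\nabla F(x))=\det(H_xf)=0$, i.e., $F$ fails to be a submersion at a zero. By the second non-degeneracy condition in the definition of $\Gamma_3^{nd}(\calT)$, for each fixed $x$ the vector $(f(x),\nabla_xf,H_xf)$ — and in particular the pair $(\nabla_xf, H_xf)$ — is a non-degenerate Gaussian vector, so the density of $\nabla_xf$ at $0$ is bounded and the conditional law of $H_xf$ given $\nabla_xf=0$ is a non-degenerate Gaussian; hence the conditional probability that $\det(H_xf)=0$ given $\nabla_xf=0$ is zero. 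A standard Bulinskaya-type argument (dividing $\calT$ into small cubes, using that on each cube $F$ is $C^1$ with controlled Lipschitz constant, and integrating the bound on the density of $\nabla_xf$ near $0$ against the indicator that $\det(H_xf)$ is small) then shows that a.s. there is no $x$ with $F(x)=0$ and $\det(\nabla F(x))=0$. Concretely, one shows $\E[\#\{x:\nabla_xf=0,\ |\det H_xf|\le\delta\}]\to 0$ as $\delta\to 0$, which forces the set to be a.s. empty; alternatively one invokes directly the Kac–Rice formula for $F$ (valid since the required non-degeneracy holds) and notes the expected number of degenerate zeros is $\int_\calT \E[|\det H_xf|\,\mathbf{1}_{\det H_xf=0}\mid \nabla_xf=0]\,p_{\nabla_xf}(0)\,dx=0$.

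\textbf{Step 2: a.s. distinct critical values.} Suppose $x\neq y$ are two critical points with $f(x)=f(y)$. Then the map $G:\calT\times\calT\setminus\Delta\to\R^{2d+1}$ defined by $G(x,y)=(\nabla_xf,\nabla_yf,f(x)-f(y))$ vanishes at $(x,y)$. The target has dimension $2d+1>2d=\dim(\calT\times\calT)$, so generically such zeros do not exist. To make this rigorous, cover $\calT\times\calT$ minus a neighborhood of the diagonal by finitely many small product cubes; on each such region the first non-degeneracy condition in the definition of $\Gamma_3^{nd}(\calT)$ guarantees that $(f(x),\nabla_xf,f(y),\nabla_yf)$ is a non-degenerate Gaussian vector, so $(\nabla_xf,\nabla_yf,f(x)-f(y))$ has a bounded density on $\R^{2d+1}$ uniformly over the region. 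A Bulinskaya-type estimate then gives $\prob[\exists (x,y) \text{ in the region}: G(x,y)=0]=0$: indeed, partitioning into cubes of side $r$, the probability that $G$ has a zero in a given cube is at most the probability that $|G|\le C r$ at the cube's center (using the a.s. Lipschitz bound on $G$, whose moments are finite by Gaussianity), which by the density bound is $O(r^{2d+1})$; summing over the $O(r^{-2d})$ cubes and letting $r\to 0$ yields probability zero. A neighborhood of the diagonal is handled by taking a countable union over $\{|x-y|\ge 1/n\}$, so a.s. no two distinct critical points share a value.

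Combining Steps 1 and 2, $\gamma(\calM)=1$.
\end{proof}

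I expect the main obstacle to be the bookkeeping in Step 2: one must carefully exclude a neighborhood of the diagonal (where the Gaussian vector $(f(x),\nabla_xf,f(y),\nabla_yf)$ degenerates as $y\to x$) and argue that the overcounted codimension-$(2d+1)$ condition in dimension $2d$ really forces emptiness — this requires uniform density bounds on compact subsets away from $\Delta$ together with uniform control of the Lipschitz constant of $G$, both of which follow from continuity of the relevant covariance quantities and finiteness of Gaussian moments, but need to be stated with care. Step 1 is the more classical "no degenerate critical points" statement and should follow directly from the Kac–Rice / Bulinskaya machinery once the pointwise non-degeneracy of $(\nabla_xf,H_xf)$ is in hand.
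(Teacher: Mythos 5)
Your proposal is correct and follows essentially the same route as the paper: both steps are instances of Bulinskaya's lemma (Lemma 11.2.10 of Adler--Taylor), applied first to the field detecting degenerate critical points via the non-degeneracy of $(\nabla_xf,H_xf)$, and then to the field $(\nabla_xf,\nabla_yf,f(x)-f(y))$ on a compact exhaustion of $\calT\times\calT$ minus the diagonal via the non-degeneracy of $(f(x),\nabla_xf,f(y),\nabla_yf)$, exactly as in the paper. The only caveat is your parenthetical Kac--Rice alternative in Step 1, which is mildly circular (the standard Kac--Rice hypotheses already presuppose a.s.\ non-degenerate zeros), but your primary conditioning-plus-covering argument is sound and is the one to keep.
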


Moreover, the class $\Gamma_3^{nd}(\calT)$ is both open and dense in $\Gamma_3(\calT)$:

\begin{lemma}\label{l:nd_is_open}
The subset $\Gamma_3^{nd}(\calT)$ is open in $\Gamma_3(\calT)$.
\end{lemma}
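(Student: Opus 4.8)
The goal is to show that $\Gamma_3^{nd}(\calT)$ is open in $\Gamma_3(\calT)$, i.e. that the two non-degeneracy conditions defining it are stable under small perturbations of the Gaussian measure in the weak-$*$ $C^3$ topology.

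\bigskip

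\noindent\textbf{Proof plan.}

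The plan is to reformulate each non-degeneracy condition as the non-vanishing of the determinant of a continuous matrix-valued function on a compact parameter space, and then use a compactness argument to convert the pointwise non-vanishing into a uniform lower bound that survives small perturbations.

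First I would fix notation: for a Gaussian field $f$ on $\calT$ with law $\gamma\in\Gamma_3(\calT)$, the finite-dimensional marginals such as $(f(x),\nabla_x f, f(y),\nabla_y f)$ or $(f(x),\nabla_x f, H_x f)$ are centered Gaussian vectors, and their covariance matrices are obtained by applying the covariance bilinear form of $\gamma$ to the coordinate evaluation functionals $u\mapsto \partial^\alpha u(x)$, which are continuous linear functionals on $C^3(\calT)$. Hence, writing $\Sigma_1(\gamma;x,y)$ for the covariance matrix of the first vector and $\Sigma_2(\gamma;x)$ for that of the second, both $\det \Sigma_1(\gamma;x,y)$ and $\det \Sigma_2(\gamma;x)$ depend continuously on the triple $(\gamma,x,y)$ (resp. $(\gamma,x)$): continuity in $\gamma$ is exactly weak-$*$ convergence tested against the bounded-below-order evaluation functionals (one must check these functionals are continuous on $C^3$, which they are for $|\alpha|\le 2$; note $H_xf$ involves only second derivatives, so $C^2$-continuity suffices and a fortiori $C^3$), and continuity in $x,y$ follows because $f$ is a.s. $C^3$ and the covariance kernel together with its derivatives up to order relevant is continuous — this is a standard Kolmogorov-type fact, or can be extracted from the regularity lemmas already invoked in the paper.

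Next, given $\gamma_0\in\Gamma_3^{nd}(\calT)$, the condition says $\det\Sigma_1(\gamma_0;x,y)\neq 0$ for all distinct $x,y$ and $\det\Sigma_2(\gamma_0;x)\neq 0$ for all $x$. The subtlety is that the first condition is stated on the open set $\{(x,y): x\neq y\}$, which is \emph{not} compact, so I cannot directly extract a uniform bound. To handle this I would study the behaviour as $y\to x$: one shows that $\Sigma_1(\gamma;x,y)$, after a linear change of coordinates (subtracting the $x$-block from the $y$-block and rescaling the differences by $|x-y|$ appropriately, i.e. a discrete-Taylor/divided-difference renormalization), converges as $y\to x$ to a matrix whose non-degeneracy is governed precisely by $\Sigma_2(\gamma;x)$ together with the non-degeneracy of $(f(x),\nabla_x f, H_x f)$ — the point being that $\det\Sigma_1$ vanishes to a definite order in $|x-y|$ with a nonzero coefficient expressible via $\Sigma_2$. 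Concretely, I would show that $\det\Sigma_1(\gamma;x,y) = |x-y|^{2m} \, \Phi(\gamma;x,y)$ where $\Phi$ extends continuously to the diagonal with $\Phi(\gamma;x,x)$ a nonzero multiple of $\det\Sigma_2(\gamma;x)$ (for a suitable exponent $m$ depending on $d$). Then $\Phi$ is continuous on the compact set $\calT\times\calT$ and, for $\gamma_0$, non-vanishing, hence bounded below by some $c_0>0$ on a neighbourhood of $\{\gamma_0\}\times\calT\times\calT$ in $\Gamma_3(\calT)\times\calT\times\calT$ by continuity and compactness; likewise $|\det\Sigma_2(\gamma;x)|\ge c_0'>0$ on a neighbourhood. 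Combining, for $\gamma$ near $\gamma_0$ we get $\det\Sigma_1(\gamma;x,y)\neq 0$ for all $x\neq y$ and $\det\Sigma_2(\gamma;x)\neq 0$ for all $x$, i.e.\ $\gamma\in\Gamma_3^{nd}(\calT)$.

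The main obstacle is exactly this diagonal analysis: proving that $\det\Sigma_1$ degenerates in a controlled way as $y\to x$, with leading coefficient governed by $\Sigma_2$ and the third-derivative data. This is a Taylor-expansion computation in the covariance kernel and its derivatives — routine in spirit but requiring care about the exact power of $|x-y|$ (which depends on $d$ and on how many independent differenced functionals appear) and about uniformity of the Taylor remainders in $\gamma$, the latter following from the fact that weak-$*$ $C^3$ convergence controls the kernel in $C^3$ uniformly on compacts. Once this lemma is in place, the openness statement is a short compactness argument as sketched above. I would also remark that the reverse inclusion — that $\Gamma_3^{nd}(\calT)$ is dense — is not needed here (it is presumably handled elsewhere), so the proof ends at the uniform lower bound.
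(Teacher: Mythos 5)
Your high-level strategy (express the two conditions as non-vanishing of covariance determinants, note that the only obstruction to a compactness argument is the diagonal $y=x$, and analyse the coalescence $y\to x$) is the same as the paper's, which proves the complement is sequentially closed and handles the case $x_n,y_n\to x$ by a renormalized limit. However, the key lemma you propose for the diagonal is not correct as stated, and this is a genuine gap. First, the limit of any renormalization of $\Sigma_1(\gamma;x,y)$ as $y\to x$ depends on the direction $\tau=\lim (y-x)/|y-x|$, so your $\Phi$ cannot be a continuous function on $\calT\times\calT$ with a well-defined value at $(x,x)$; one must work on the spherical blow-up of the diagonal. More seriously, the natural divided-difference renormalization of $(f(x),\nabla_xf,f(y),\nabla_yf)$ converges to $(f(x),\nabla_xf,\tfrac12\tau^{T}H_xf\,\tau,H_xf\,\tau)$, and this vector is \emph{always} degenerate, since $\tau^{T}H_xf\,\tau=\langle \tau,H_xf\,\tau\rangle$ is a deterministic linear function of the last block. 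So the renormalized determinant tends to $0$ identically, and $\det\Sigma_1$ vanishes to strictly higher order than your $|x-y|^{2m}$ with leading coefficient $c\det\Sigma_2$. Pushing the Taylor expansion one order further (e.g.\ in $d=1$, the combination $f(y)-f(x)-\tfrac12(y-x)(f'(x)+f'(y))=O(|y-x|^3)$) shows that the true leading coefficient involves the \emph{third} derivatives of $f$ at $x$. Joint non-degeneracy of the $3$-jet is not part of the definition of $\Gamma_3^{nd}(\calT)$, so even for $\gamma_0\in\Gamma_3^{nd}(\calT)$ the correctly renormalized diagonal limit may vanish, and your uniform lower bound $\Phi\geq c_0>0$ near the diagonal need not exist. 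The compactness argument therefore does not close.

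The repair is to weaken what you extract from the diagonal limit, which is what the paper does: if $\Sigma_1(\gamma_n;x_n,y_n)$ is degenerate, there are linear forms $\lambda_n,\lambda_n'$ with $\lambda_n(f_n(x_n),\nabla_{x_n}f_n)-\lambda_n'(f_n(y_n),\nabla_{y_n}f_n)=0$ a.s.; normalizing by $\max(|x_n-y_n|,|\lambda_n-\lambda_n'|)$ and passing to the limit along a subsequence produces a \emph{non-trivial} linear form annihilating $(f(x),\nabla_xf,H_xf)$ a.s., i.e.\ degeneracy of $\Sigma_2(\gamma;x)$. This only requires that \emph{some} non-zero functional of the $2$-jet survives in the limit, not that the full renormalized $(2d+2)$-dimensional covariance stays non-degenerate, and it is exactly the statement needed for openness. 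Your continuity-in-$(\gamma,x,y)$ observations and the off-diagonal compactness step are fine and coincide with the paper's.
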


\begin{lemma}\label{l:nd_dense_for_q}
Let $\calC$ be the set of functions $q\in C^\infty(\calT)$ such that the Gaussian measure induced by $q*W$ in $\Gamma_3(\calT)$ belongs to $\Gamma_3^{nd}(\calT)$. Then, for each $\nu\in]0,1[$, $\calC$ is dense in $C^\nu(\calT)$.
\end{lemma}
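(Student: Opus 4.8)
The goal is to prove Lemma \ref{l:nd_dense_for_q}: that the set $\calC$ of smooth $q$ for which $q*W$ induces a measure in $\Gamma_3^{nd}(\calT)$ is dense in $C^\nu(\calT)$. The plan is to show that the two non-degeneracy conditions defining $\Gamma_3^{nd}(\calT)$ are each generic among smooth $q$, and then conclude by a Baire-type intersection argument combined with the density of $C^\infty(\calT)$ in $C^\nu(\calT)$.

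First I would reformulate the non-degeneracy conditions spectrally. If $f=q*W$, then the covariance of $f$ is $\kappa=q*\tilde q$ (with $\tilde q(x)=q(-x)$), and the spectral measure of $f$ has density $|\hat q(\xi)|^2$ up to normalization. A finite Gaussian vector built from $f$ and its derivatives at points $x_1,\dots,x_m$ is non-degenerate if and only if no nontrivial linear combination $L = \sum_j \sum_{|\alpha|\le 3} c_{j,\alpha}\,\partial^\alpha f(x_j)$ is a.s. zero, equivalently if and only if the functions $\xi \mapsto P_j(\xi) e^{i\langle x_j,\xi\rangle}$ (with $P_j$ the polynomial symbol of the derivative operator at $x_j$) are linearly independent in $L^2$ of the spectral measure — i.e. independent on the support of $\hat q$. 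So degeneracy forces a linear ODE-type relation to hold on $\mathrm{supp}(\hat q)$. The key observation is that the set of $q\in C^\infty(\calT)$ (equivalently, summable Fourier data) whose Fourier transform has full support — vanishing nowhere on the dual lattice $\widehat{\calT}=R_1^{-1}\Z\times\cdots\times R_d^{-1}\Z$ — is dense in $C^\nu(\calT)$: given any $q$, perturb its Fourier coefficients by an arbitrarily small, rapidly decaying sequence that is nonzero everywhere (this keeps the perturbation small in every $C^l$ norm, hence in $C^\nu$). Once $\hat q$ never vanishes, the required independence statements on $\mathrm{supp}(\hat q)=\widehat{\calT}$ are purely about whether finitely many exponential-polynomial functions can be linearly dependent on all of $\widehat{\calT}$; since the $x_j$ are distinct and the symbols $P_j$ have degree $\le 3$, an explicit Vandermonde/Wronskian-type computation shows such a dependence is impossible unless all coefficients vanish. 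This handles both conditions simultaneously, for all pairs/points at once, for the single dense set of "full spectrum" $q$.

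Concretely, the steps in order: (1) translate "$(f(x),\nabla_x f, H_x f)$ non-degenerate for all $x$" and "$(f(x),\nabla_x f,f(y),\nabla_y f)$ non-degenerate for all distinct $x,y$" into non-vanishing of certain Gram determinants, then into the linear independence in $L^2(|\hat q|^2 d\#)$ of the relevant families $\{P(\xi)e^{i\langle x,\xi\rangle}\}$; (2) observe that if $\hat q(\xi)\neq 0$ for every $\xi\in\widehat\calT$, the independence is equivalent to independence as functions on $\widehat\calT$, which holds unconditionally by an elementary argument (distinct translates contribute distinct exponentials; within a fixed $x$, $1,\xi_1,\xi_2,\dots$ and the degree-$\le 2$ monomials are independent on the infinite lattice $\widehat\calT$); (3) show $\{q\in C^\infty(\calT): \hat q \text{ nowhere zero on }\widehat\calT\}$ is dense in $C^\nu(\calT)$ by the Fourier-coefficient perturbation above, using that $C^\infty$ is dense in $C^\nu$ and that adding a term with Fourier coefficients $(\eta_\xi)$ satisfying $|\eta_\xi|\le \delta(1+|\xi|)^{-N}$ changes every $C^l$ norm by at most $C_{l,N}\delta$; (4) conclude that $\calC$ contains this dense set, hence is dense.

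I expect the main obstacle to be step (2) — or rather, being careful about what "non-degenerate for \emph{all} $x$ (resp. all distinct pairs)" buys us. The subtlety is that there are uncountably many conditions (one per point or pair), so one cannot naively intersect; the fix, already built into the plan, is that for the special class of $q$ with full spectrum, \emph{every} such condition holds simultaneously, so no intersection of a null sequence of conditions is needed — the uniform full-support property does all the work at once. A secondary technical point is verifying the linear-independence-on-a-lattice claim with the correct bookkeeping: one must check that a relation $\sum_j Q_j(n/R)\,\omega_j^{\,n}=0$ for all $n\in\Z^d$ (where $\omega_j = e^{i\langle x_j, \cdot\rangle}$ and $Q_j$ are polynomials of degree $\le 3$) forces all $Q_j\equiv 0$ — this is a standard fact about exponential polynomials restricted to arithmetic progressions, provable by iterated finite differences in each coordinate direction, but it needs the distinctness of the $x_j$ modulo $2\pi R_i\Z$, which is exactly the hypothesis. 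Everything else is routine: Gram-determinant reformulations of Gaussian non-degeneracy, and the standard density of $C^\infty$ in $C^\nu$ together with continuity of $q\mapsto$ (law of $q*W$) into $\Gamma_3(\calT)$.
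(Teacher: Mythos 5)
Your argument is correct, but it takes a genuinely different route from the paper. The paper reduces to band-limited $q$ whose Fourier support is exactly a ball $B_R$ of the dual lattice, identifies the Cameron--Martin space as $L^2_R(\calT)$, reformulates membership in $\Gamma_3^{nd}(\calT)$ as surjectivity of the jet-evaluation maps $P^1_{x,y}$ and $P^2_x$ restricted to that space (Assertion 4 of Lemma \ref{l:nd_basics}), and then invokes the multijet transversality theorem to produce \emph{some} finite-dimensional subspace of $C^\infty(\calT)$ with this surjectivity property, transferring back via the openness of $\Gamma_3^{nd}(\calT)$ (Lemma \ref{l:nd_is_open}). You instead work spectrally: degeneracy of a jet of $f=q*W$ is the vanishing of an exponential polynomial $\sum_j P_j(\xi)e^{i\langle x_j,\xi\rangle}$ on the support of $\hat q$ in $\hat{\calT}$, so any $q$ whose Fourier coefficients vanish nowhere on $\hat{\calT}$ lies in $\calC$ outright, by the elementary linear independence of the functions $n\mapsto n^\alpha\omega^n$ on $\Z^d$ for distinct $(\alpha,\omega)$; such $q$ are dense by an arbitrarily small rapidly decaying (real-symmetric) perturbation of the Fourier coefficients. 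Your approach is more elementary and self-contained -- it avoids transversality theory entirely, handles both non-degeneracy conditions and all points/pairs simultaneously through the single ``full spectrum'' property, and does not even need Lemma \ref{l:nd_is_open} for this step -- at the cost of being tied to the translation-invariant Fourier structure of the torus, whereas the paper's jet-transversality argument is the more robust template for non-stationary or non-explicit situations. Two minor bookkeeping points: the symbols involved have degree at most $2$ (the Hessian), not $3$; and the independence of exponential polynomials on $\Z^d$ needs the tuples $\omega_j=(e^{ix_{j,k}/R_k})_k$ to be distinct, which is exactly what distinctness of $x,y$ in $\calT$ provides, as you note. (Both you and the paper assert density of $C^\infty(\calT)$ in $C^\nu(\calT)$; this is the same convention on the H\"older topology as in the paper, so it is not a point of divergence.)
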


\subsection{Proof of Theorem \ref{t:main_formula}}\label{ss:main_proof}

In this subsection, we combine the results of Subsections \ref{ss:talagrand}, \ref{ss:perfect_morse}, \ref{ss:white_noise} and \ref{ss:approximations} to prove Theorem \ref{t:main_formula}.
\begin{proof}[Proof of Theorem \ref{t:main_formula}]
Let $\gamma$ be the law of $f$. We start by assuming that $q\in C^\infty(\calT)$ so that in particular $\gamma\in\Gamma_3(\calT)$. We also make the assumption that $\gamma\in\Gamma_3^{nd}(\calT)$ and that for each $\eps\in\{\frac{1}{k} \st k\in\N,\ k\geq 1\}=\calE$, $(q(\cdot-z))_{z\in\Lambda_\eps}$ is linearly independent (as in Lemma \ref{l:generic_independence}). For each $\eps\in\calE$, let $\gamma_\eps$ be the law of $f_\eps=q*W_\eps$ where $W_\eps$ is the approximation of the $L^2$ white noise introduced in \eqref{e:approximate_white_noise} and let $K_\eps$ be the covariance of $f_\eps$. In the rest of the proof, the symbol $\eps$ will denote an element of $\calE$. By Lemma \ref{l:white_noise_approximation}, as $\eps\rightarrow 0$, $\gamma_\eps$ converges to $\gamma$ in the weak-* topology over $C^3(\calT)$. By Lemmas \ref{l:nd_is_open} and \ref{l:nd_is_morse}, for all small enough $\eps>0$, $f_\eps$ is a.s. Morse. In particular, $\sad(f_\eps)$ and $\sad(f)$ are well defined. By Lemma \ref{l:CM_basis}, since $(q(\cdot-z))_{z\in\Lambda_\eps}$ is linearly independent, it is an orthogonal basis of the Cameron-Martin space of $f_\eps$. Moreover, since $f_\eps$ is $\Lambda_\eps$-periodic, the law of $q(\sad(f)-z)$ does not depend on $z\in\Lambda_\eps$. In particular, by Proposition \ref{p:main},
\begin{equation}\label{e:skeleton_1}
\var(\thr(f_\eps))\leq C\E\brb{K_\eps\br{\sad(f_\eps),\sad(f_\eps)}}\brb{1+\ln\br{ \frac{\sqrt{\E\brb{q\br{\sad(f_\eps)}^2}}}{\E\brb{\left|q\br{\sad(f_\eps)}\right|}}}}^{-1}\, .
\end{equation}
By Lemmas \ref{l:white_noise_approximation} and \ref{l:cv_gamma_to_k}, $(K_\eps)_\eps$ converges uniformly to $K$. This observation, combined with Lemma  \ref{l:saddle_continuity}, shows that, as $\eps\rightarrow 0$, the right-hand side of \eqref{e:skeleton_1} converges to the same quantity with $K_\eps$ and $f_\eps$ replaced by $K$ and $f$ respectively. Moreover, by Lemma \ref{l:white_noise_approximation}, $\lim_{\eps\rightarrow 0}\E\brb{\|f_\eps-f\|_{C^0}^2}=0$. Since $\thr$ is $C^0$-Lipschitz, we have $\lim_{\eps\rightarrow 0}\var\br{\thr(f_\eps)}=\var\br{\thr(f)}$. Hence, taking $\eps\rightarrow 0$ in \eqref{e:skeleton_1} yields
\[
\var\br{\thr(f)}\leq C\E\brb{K\br{\sad(f),\sad(f)}}\brb{1+\ln\br{ \frac{\sqrt{\E\brb{q\br{\sad(f)}^2}}}{\E\brb{\left|q\br{\sad(f)}\right|}}}}^{-1}\, .
\]
Now, since $f$ is stationary, $\sad(f)$ is uniformly distributed on the torus. Therefore,
\begin{equation}\label{e:skeleton_2}
\var\br{\thr(f)}\leq \frac{C}{|\calT|}\int_\calT K(x,x)dx\brb{1+\ln\br{ \frac{\sqrt{\|q\|_{L^2}|\calT|}}{\|q\|_{L^1}}}}^{-1}\, .
\end{equation}
But for each $x\in\calT$, $K(x,x)=q*q(0)=\sigma^2$ as announced. Let us now lift the assumptions on $q$. We now only assume that $q\in C^\nu(\calT)$ for some $\nu\in]0,1[$. By Lemmas \ref{l:generic_independence} and \ref{l:nd_dense_for_q}, we may find a sequence $(q_n)_{n\in\N}$ of smooth $C^\infty$ functions converging in $C^\nu$ to $q$ such that for each $n$, the measure of $q_n*W$ belongs to $\Gamma_3^{nd}(\calT)$ and such that for each $\eps\in\calE$ $(q_n(\cdot-z))_{z\in\Lambda_\eps}$ is independent. All the terms on the right-hand side of \eqref{e:skeleton_2} are obviously $C^\nu$ continuous in $q$. For the term $\var\br{\thr(\cdot)}$, since $\thr$ is $C^0$-Lipschitz, it is enough to show that $\lim_{n\rightarrow+\infty}\E\brb{\|(q_n-q)*W\|_{C^0}^2}=0$. By Lemma \ref{l:sup_lemma}, it is enough to show that $\lim_{n\rightarrow 0}\|(q_n-q)*(q_n-q)\|_{C^\nu}=0$. But this follows from Lemma \ref{l:sup_convolution}. Thus, taking $n\rightarrow+\infty$, the proof is over.
\end{proof}

\section{Proof of the lemmas from Section \ref{s:torus_proof}}\label{s:torus_lemmas}

In this section, we prove the results stated in Subsections \ref{ss:perfect_morse}, \ref{ss:white_noise} and \ref{ss:approximations}. Subsections \ref{ss:topology_lemmas}, \ref{ss:white_noise_lemmas} and \ref{ss:approx_lemmas} are mutually independent and only use results from Appendix \ref{s:appendix}.

\subsection{Proof of Lemmas \ref{l:threshold} and \ref{l:saddle_continuity}}\label{ss:topology_lemmas}

The proof of Lemmas \ref{l:threshold} and \ref{l:saddle_continuity} relies on the following lemma:
\begin{lemma}\label{l:path}
Let $u\in\calM$. Let $x_0\in\textup{Crit}(u)$ and set $u(x_0)=:a$. Then, there exists $\delta=\delta(u)>0$ such that for each $v\in C^2(\calT)$ with $\|v\|_{C^2}\leq 1$ and each $t\in]-\delta,\delta[$, there exist $x_t\in\calT$ and $a_t\in\R$ such that the following hold:
\begin{itemize}
\item For each $t\in]-\delta,\delta[$, $u+tv\in\calM$.
\item For each $t\in]-\delta,\delta[$, $x_t$ is a critical point of $u+tv$ with critical value $a_t$.
\item The family $(x_t)_t$ is continuous at $0$, uniformly in $v$: for each $\eps>0$, there exists $\tilde{\delta}=\tilde{\delta}(u,\eps)\in]0,\delta]$ such that for each $t\in]-\tilde{\delta},\tilde{\delta}[$, $|x_t-x_0|$.
\item The family $(a_t)_t$ is differentiable at $0$ and $\frac{da_t}{dt}\big|_{t=0}=v(x_0)$.
\item There exists $W=W(u)\subset\calT$ an open neighborhood of $x_0$ such that for all $t\in]-\delta,\delta[$, $x_t\in W$ and $u+tv$ has no other critical points in $W$.
\end{itemize}
\end{lemma}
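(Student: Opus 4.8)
The plan is to prove Lemma~\ref{l:path} by a quantitative application of the implicit function theorem, exploiting that $u$ is a perfect Morse function so that each critical point $x_0$ of $u$ is non-degenerate. First I would fix $u\in\calM$ and $x_0\in\crit(u)$, choose a coordinate chart on $\calT$ centered at $x_0$, and consider the map $G(x,t,v):=\nabla(u+tv)(x)=\nabla u(x)+t\nabla v(x)$. Since $x_0$ is a non-degenerate critical point of $u$, the Jacobian $\partial_x G(x_0,0,v)=H_{x_0}u$ is invertible, so by the implicit function theorem there is a neighborhood of $(x_0,0)$ and a smooth map $(t,v)\mapsto x_t$ with $G(x_t,t,v)=0$, i.e. $x_t$ is a critical point of $u+tv$. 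The key point is uniformity in $v$: because $\|v\|_{C^2}\leq 1$, the relevant derivative bounds on $G$ (namely $\|t\nabla v\|_{C^1}\leq |t|$) are uniform over this ball, so the standard quantitative IFT gives a $\delta=\delta(u)>0$ and $W=W(u)$ independent of $v$. Uniform continuity of $t\mapsto x_t$ at $0$ (the third bullet, whose statement seems to be missing ``$<\eps$'' after $|x_t-x_0|$) then follows from the Lipschitz bound on the implicit function, again uniform in $v$.

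Next I would handle the critical value. Set $a_t:=(u+tv)(x_t)$. Differentiating and using that $x_t$ is a critical point, $\frac{d}{dt}a_t = \nabla(u+tv)(x_t)\cdot \dot x_t + v(x_t) = v(x_t)$, and at $t=0$ this is $v(x_0)$; differentiability at $0$ comes from differentiability of $t\mapsto x_t$ from the IFT. For the first bullet, that $u+tv\in\calM$ for small $t$: non-degeneracy of all critical points is an open condition in $C^2$ near a function whose critical points are all non-degenerate (on a compact manifold, using that $\crit(u)$ is finite and each Hessian is invertible, one localizes near each critical point with the same IFT argument and checks there are no critical points outside a fixed neighborhood of $\crit(u)$ for $u+tv$ with $|t|$ small, uniformly in $\|v\|_{C^2}\leq 1$); the distinct-critical-values condition is also open since the finitely many critical values of $u$ are distinct and depend continuously on the perturbation. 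Shrinking $\delta$ once more if needed secures all of this uniformly in $v$. The last bullet (only one critical point of $u+tv$ in $W$) is built into the IFT localization: $W$ is chosen so small that $\nabla u$ is a diffeomorphism onto its image there with $\nabla u$ bounded away from $0$ on $\partial W$, hence $\nabla(u+tv)$ has a unique zero in $W$ for $|t|<\delta$.

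I expect the main obstacle to be bookkeeping the uniformity in $v$ throughout: one must run the quantitative implicit function theorem with constants depending only on $\|H_{x_0}u^{-1}\|$ and on $C^2$-bounds of $u$ near $x_0$, and then patch together the finitely many local neighborhoods of $\crit(u)$ into a single $\delta(u)$ and a single family of neighborhoods $W$, while simultaneously ruling out stray critical points in the complement of $\bigcup_{x_0\in\crit(u)}W(u)$ — this last exclusion uses compactness of $\calT$ and $\inf\{|\nabla u(x)| : x\notin \bigcup W\}>0$ together with $\|t\nabla v\|_{C^0}\leq|t|$. None of the individual steps is deep, but assembling them into statements uniform over the unit ball of $C^2(\calT)$ is where the care is needed.
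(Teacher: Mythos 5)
Your proposal is correct and follows essentially the same route as the paper: a (quantitative) implicit function theorem argument at each non-degenerate critical point, with uniformity in $v$ coming from the bound $\|v\|_{C^2}\leq 1$, the derivative of the critical value computed via $\frac{d}{dt}(u+tv)(x_t)=v(x_t)$ since the gradient term vanishes, and $\delta(u)$ obtained by minimizing over the finitely many critical points. If anything you are more explicit than the paper on why $u+tv\in\calM$ (ruling out stray critical points outside the neighborhoods of $\crit(u)$ and preserving distinct critical values), and you correctly spot the typo "$|x_t-x_0|$" missing "$<\eps$" in the third bullet.
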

\begin{proof}
Since the statement of the lemma is local, we may assume that $v$ is supported near $x_0$ and work in local charts. By simplicity, we assume that $x_0=0$ and $a=0$. Since $u\in\calM$, the Hessian $H_0u$ of $u$ at $0$ is non-degenerate. There exist $W_1=W_1(u)\subset\R^2$ a neighborhood of $0$, $B=B(u)\subset\textup{GL}_d(\R)$ a convex neighborhood of $H_0u$ and $\delta_1=\delta_1(u)>0$ such that for each $v\in C^2_c(W_1)$ with $\|w\|_{C^2}\leq \delta_1$ and each $x\in W$, $H_x(u+w)\in B$. Let us fix $v\in C^2_c(W_1)$ with $\|v\|_{C^2}=1$ and let $t\in]-\delta_1,\delta_1[$. By construction, $x\mapsto \nabla_x(u+tv)$ is a local diffeomorphism at $0$ and $\nabla_0u=0$. Thus, there exists $W_2=W_2(u)\subset W_1$ an open neighborhood of $0$ and $\delta_2=\delta_2(u)\in]0,\delta_1]$ such that for each $t\in]-\delta_2,\delta_2[$, $u+tv$ has exactly one critical point in $W_2$, which we denote by $x_t$, which is differentiable in $t$ uniformly in $v$ (in particular, this proves the third poitn of th lemma). Therefore, letting $a_t=(u+tv)(x_t)$, we get $\frac{d a_t}{dt}\big|_{t=0}=\nabla_0u(\dot{x}_0)+v(0)=v(0)$. Taking $\delta$ to be the infimum of the $\delta_2$ for all the critical points of $u$ ensures that $u+tv\in\calM$ for $t\in]-\delta,\delta[$.
\end{proof}
We will now simultaneously prove Lemmas \ref{l:threshold} and \ref{l:saddle_continuity}.
\begin{proof}[Proof of Lemmas \ref{l:threshold} and \ref{l:saddle_continuity}]
Let $u\in\calM$ and let $\delta=\delta(u)>0$ given by Lemma \ref{l:path}. Let $v\in C^2(\calT)$ with $\|v\|_{C^2}\leq 1$ and let $x_0=\sad(u)$. Let $(a_t)_t$ and $(x_t)_t$ be the paths given by Lemma \ref{l:path} associated to $u$, $x_0$ and $v$. By continuity of $\thr$, we must have $\thr(u+tv)=a_t$, which in turn implies that for each $t\in]-\delta,\delta[$, $\sad(u+tv)=x_t$. Thus, by Lemma \ref{l:path} $\frac{d}{dt}\big|_{t=0}\thr(u+tv)=v\br{\sad(u)}$ and $\sad$ is continuous at $u$: for each $\eps>0$, there exists $\delta_1\in]0,\delta]$ independent of $v$ such that for all $t\in]-\delta_1,\delta_1[$, $\left|\sad(u+tv)-\sad(u)\right|\leq\eps$.
\end{proof}

\subsection{Proof of Lemmas \ref{l:white_noise_approximation} and \ref{l:generic_independence}}\label{ss:white_noise_lemmas}

\begin{proof}[Proof of Lemma \ref{l:white_noise_approximation}]
By induction it is enough to prove the case where $l=0$. To prove the convergence we use Lemma \ref{l:sup_lemma} to show that the expected supremum of $\|f-f_\eps\|_{C^0}$ tends to $0$ as $\eps\rightarrow+\infty$. To apply the lemma we assume that for $i=1,\dots,d$, $R_i\geq 1$, as we may by rescaling the torus by an adequate factor. For each $l>0$, this will only  multiply all convolutions by the same factor. For each $\eps\in\calE$, let $K_\eps$ be the covariance function of $f-f_\eps$. Lemma \ref{l:sup_lemma} reduces the proof to showing that the function $h_\eps(x)=K_\eps(x,x)$ converges to $0$ in $C^\nu$-norm. First of all, for each $x\in\calT$,
\[
q*W(x)-q*W_\eps(x)=\sum_{z\in\Lambda\eps}\langle W,(q(x-\cdot)-q(x-z))\un_{Q^\eps_z}\rangle\, .
\]
so that
\[
h_\eps(x)=\sum_{z\in\Lambda\eps}\int_{Q^\eps_z}(q(x-y)-q(x-z))^2dy\leq |\calT|\|q\|_{C^1}^2\eps^{2}\xrightarrow[\eps\to 0]{}0\, .
\]
Now, take $x,x'\in\calT$. Then, for each $z\in\Lambda_\eps$,
\begin{align*}
\left|q(x-y)-q(x-z)-q(x'-y)+q(x'-y)\right|&\leq \int_0^1\left|\langle\nabla_{(x-(ty+(1-t)z)}q-\nabla_{(x'-(ty+(1-t)z)}q,y-z\rangle\right|dt\\
&\leq \|q\|_{C^{1,\nu}}\textup{dist}(x,x')^\nu\eps
\end{align*}
so that, using $|a^2-b^2|=|a-b||a+b|$, we get
\[
\left|(q(x-y)-q(x-z))^2-(q(x'-y)-q(x-z))^2\right|\leq 4\|q\|_{C^{1,\nu}}^2\textup{dist}(x,x')^\nu\eps\, .
\]
But this last relation implies that
\begin{align*}
|h_\eps(x)-h_\eps(x')|&\leq\sum_{z\in\Lambda_\eps}\int_{Q^\eps_z}\left|(q(x-y)-q(x-z))^2-(q(x'-y)-q(x-z))^2\right|dy\\
\\
&\leq 4|\calT|\|q\|_{C^{1,\nu}}^2\textup{dist}(x,x')^\nu\eps\xrightarrow[\eps\to 0]{}0\, .
\end{align*}
Thus, $h_\eps \xrightarrow[\eps\to 0]{}0$ in $C^\nu$. By Lemma \ref{l:sup_lemma}, $\E\brb{\|f_\eps-f\|_{C^0}}\xrightarrow[\eps\to 0]{}0$ and $(\|h\|_{C^\nu}^{1/2}\|f_\eps-f\|_{C^0})_{\eps>0}$ has bounded Gaussian tails so in particular the convergence takes place in $L^p$ for all $p\in[1,+\infty[$.
\end{proof}

\begin{proof}[Proof of Lemma \ref{l:generic_independence}]
For each $\eps\in\calE$, let $\calG_\eps$ be the set of functions $q\in C^\infty(\calT)$ such that $\det(q(z-z')_{z,z'\in\Lambda_\eps})\neq 0$. Note that $\cap_{\eps\in\calE}\calG_\eps$ is a subset of the set of functions under consideration. It is therefore enough to show that for each $\eps\in\calE$, $\calG_\eps$, is open and dense. The fact that it is open is clear from the definition. It is dense because the map $q\mapsto (q(z-z'))_{z,z'\in\Lambda_\eps}$ is both continuous and open.
\end{proof}

\subsection{Proof of Lemmas \ref{l:nd_is_morse}, \ref{l:nd_is_open} and \ref{l:nd_dense_for_q}}\label{ss:approx_lemmas}
We begin with the proof of Lemma \ref{l:nd_is_morse}.
\begin{proof}[Proof of Lemma \ref{l:nd_is_morse}]
Let $f$ be an a.s. $C^3$ Gaussian field on $\calT$ with law $\gamma\in\Gamma_3^{nd}(\calT)$. We will show that $f\in\calM$ almost surely. This will follow by applying Lemma 11.2.10 of \cite{adler_taylor} in the right setting. First, notice that $(\nabla_xf,\det(H_xf))_{x\in\calT}$ defines an a.s. $C^1$ field on $\calT$ with values in $\R^{d+1}$ and with uniformly bounded pointwise density. By Lemma 11.2.10 of \cite{adler_taylor}, a.s. it does not vanish on $\calT$. Thus, $f$ is a.s. a Morse function on $\calT$. Applying a similar reasoning to the field $(x,y)\mapsto (f(x)-f(y))^2+|\nabla_xf|^2+|\nabla_yf|^2$ on a compact exhaustion of $\calT\times\calT\setminus\{(x,x) \st x\in\calT\}$, we see that a.s., $f$ does not have two critical points at the same height. Hence, a.s., $f\in\calM$.
\end{proof}
For the proofs of Lemmas \ref{l:nd_is_open} and \ref{l:nd_dense_for_q}, we will use the following characterization of $\Gamma_3^{nd}(\calT)$.
\begin{lemma}\label{l:nd_basics}
Let $f$ be a centered Gaussian field on $\calT$ with law $\gamma\in\Gamma_3(\calT)$. Let $H$ be its Cameron-Martin space and let $K$ be the covariance function of the field $f$. For each $x,y\in\calT$ distinct let $P^1_{x,y}:C^1(\calT)\rightarrow\R^{2d+2}$ be defined as follows:
\[
P^1_{x,y}u=(u(x),\nabla_xu,u(y),\nabla_yu)\, .
\]
For each $x\in\calT$, let $P^2_x:C^2(\calT)\rightarrow\R^{d+d(d+1)/2}$ be defined as follows:
\[
P^2_xu=(\nabla_xu,H_xu)
\]
where the space of $d\times d$ symmetric matrices is identified with $\R^{d(d+1)/2}$. Then, the following assertions are equivalent:
\begin{enumerate}
\item The measure $\gamma$ belongs to $\Gamma_3^{nd}(\calT)$.
\item For each $x,y\in\calT$ distinct, the vectors $P^1_{x,y}f$ and $P^2_xf$ are non-degenerate.
\item For each $x,y\in\calT$ distinct, the matrices $(P^1_{x,y}\otimes P^1_{x,y}) K$ and $(P^2_x\otimes P^2_x)K$ are non-degenerate.
\item For each $x,y\in\calT$ distinct, the maps $P^1_{x,y}$ and $P^2_x$ are surjective when restricted to $H$.
\end{enumerate}
\end{lemma}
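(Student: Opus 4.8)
The plan is to reduce each of the four statements to a property of the two finite-dimensional centered Gaussian vectors $P^1_{x,y}f$ and $P^2_xf$, and then chain the resulting equivalences. Before anything I would record the routine preliminaries: since $\gamma\in\Gamma_3(\calT)$ the field $f$ is a.s.\ of class $C^3$, so $H_xf$ is defined for every $x$, and the Cameron--Martin space $H$ of $\gamma$ sits inside the path space $C^3(\calT)$, hence inside $C^2(\calT)$ and $C^1(\calT)$; this makes $P^1_{x,y}|_H$ and $P^2_x|_H$ meaningful. I would also note that $P^1_{x,y}\colon C^1(\calT)\to\R^{2d+2}$ and $P^2_x\colon C^2(\calT)\to\R^{d+d(d+1)/2}$ are continuous linear maps, since point evaluations of a function and of its first (resp.\ first and second) partials are continuous for the $C^1$ (resp.\ $C^2$) norm.

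First I would dispatch (1)$\Leftrightarrow$(2), which is essentially unwinding the definition of $\Gamma_3^{nd}(\calT)$: the hypothesis on $P^1_{x,y}f=(f(x),\nabla_xf,f(y),\nabla_yf)$ is verbatim the first bullet of that definition, and the hypothesis on $P^2_xf=(\nabla_xf,H_xf)$ agrees with the second bullet up to the coordinate $f(x)$, which is harmless (a subvector of a non-degenerate Gaussian vector is again non-degenerate, a principal submatrix of a positive-definite matrix being positive-definite, and conversely adjoining $f(x)$ to $(\nabla_xf,H_xf)$ preserves non-degeneracy once the first-bullet condition is in force). Next, for (2)$\Leftrightarrow$(3): for any continuous linear $P\colon C^l(\calT)\to\R^m$ the vector $Pf$ is centered Gaussian with covariance matrix exactly $(P\otimes P)K$ — this is just the definition of $K$ as the covariance form of $f$ together with the commutation of expectation and the component functionals of $P$ (for the derivative functionals this is the standard identity $\E[\partial^\alpha f(x)\,\partial^\beta f(y)]=\partial^\alpha_x\partial^\beta_y K(x,y)$, valid under our regularity). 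Since a centered Gaussian vector is non-degenerate precisely when its covariance matrix is invertible, (2)$\Leftrightarrow$(3) is immediate.

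Finally, for (2)$\Leftrightarrow$(4) I would invoke the standard fact from the theory of Gaussian measures that for a continuous linear map $T\colon E\to\R^m$ out of a separable Banach space $E$ carrying a centered Gaussian measure $\gamma$ with Cameron--Martin space $H$, the pushforward $T_*\gamma$ is centered Gaussian on $\R^m$ with Cameron--Martin space $T(H)$, and a centered Gaussian measure on $\R^m$ is non-degenerate iff its Cameron--Martin space is all of $\R^m$. Applied with $(E,T)=(C^1(\calT),P^1_{x,y})$ and with $(E,T)=(C^2(\calT),P^2_x)$ — the Cameron--Martin space being intrinsic to the law on paths, it is the same $H$ whichever ambient space we use — this gives $P^1_{x,y}f$ non-degenerate $\Leftrightarrow$ $P^1_{x,y}(H)=\R^{2d+2}$, and likewise for $P^2_x$, which is exactly (4); transitivity of the three equivalences then closes the loop. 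I expect the only delicate point to be the Cameron--Martin bookkeeping in this last step: confirming $H\subset C^3(\calT)$ and that restricting the (derivative-)evaluation functionals to $H$ causes no trouble, together with citing the pushforward-of-Gaussian-measure theorem in the right form. Everything else amounts to unwinding definitions and elementary linear algebra over the Gaussian vectors involved.
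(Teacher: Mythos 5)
Your proposal follows the same skeleton as the paper's proof: (1)$\Leftrightarrow$(2) by unwinding the definition, (2)$\Leftrightarrow$(3) via the identity $\E[\partial^\alpha f(x)\partial^\beta f(y)]=\partial^{\alpha,\beta}K(x,y)$, and then a link to (4) through the Cameron--Martin space. The only genuinely different route is that last link: you invoke the general pushforward theorem (the image of a Gaussian measure under a continuous linear map $T$ is Gaussian with Cameron--Martin space $T(H)$, and a Gaussian vector on $\R^k$ is non-degenerate iff its Cameron--Martin space is all of $\R^k$), whereas the paper argues both directions by hand: for (3)$\Rightarrow$(4) it uses the reproducing-kernel property, namely that $(\lambda\circ P\otimes \mathrm{id})K\in H$ for every $\lambda\in(\R^k)^*$, so invertibility of $(P\otimes P)K$ forces $P|_H$ to be surjective; for (4)$\Rightarrow$(2) it splits $f=f_1+f_2$ along $H=\ker(P|_H)\oplus\ker(P|_H)^\perp$, so that $Pf=Pf_2$ with $f_2$ a non-degenerate Gaussian vector on the $k$-dimensional complement. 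Citing the pushforward theorem is legitimate and shorter; the paper's version has the advantage of being self-contained and of not requiring you to check the hypotheses of a theorem stated for Gaussian measures on Banach spaces.

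The one step I would flag is your repair of the mismatch between the second bullet in the definition of $\Gamma_3^{nd}(\calT)$, which concerns $(f(x),\nabla_xf,H_xf)$, and $P^2_xf=(\nabla_xf,H_xf)$. Dropping a coordinate preserves non-degeneracy, so (1)$\Rightarrow$(2) is fine; but your converse claim --- that adjoining $f(x)$ preserves non-degeneracy ``once the first-bullet condition is in force'' --- is not justified, and I do not believe it holds in general. A degeneracy of $(f(x),\nabla_xf,H_xf)$ with nonzero coefficients on both $f(x)$ and the Hessian entries (for instance $\Delta f+r^2 f\equiv 0$ for a field whose spectral measure is carried by a sphere of radius $r$, as for arithmetic random waves) is compatible both with the non-degeneracy of $(\nabla_xf,H_xf)$ and with the two-point condition on $(f(x),\nabla_xf,f(y),\nabla_yf)$, since the latter carries no information about linear relations involving second derivatives. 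The paper simply asserts that (1)$\Leftrightarrow$(2) holds ``by definition'', which strongly suggests the omission of $u(x)$ from $P^2_x$ is a typo; with $P^2_xu=(u(x),\nabla_xu,H_xu)$ (and target dimension $1+d+d(d+1)/2$) the step really is definitional, and the rest of your argument goes through unchanged.
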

\begin{proof}
The first two points are equivalent by definition of $\Gamma_3^{nd}(\calT)$. The second and third point are equivalent because the covariance matrix of $P^1_{x,y}f$ is $(P^1_{x,y}\otimes P^1_{x,y})K$ and similarly for $P^2_xf$. Now, assume that the third assertion is true and fix $x,y\in\calT$ distinct. Let $P=P^1_{x,y}$ (resp. $P=P^2_x$) and $k=2d+2$ (resp. $k=d+d(d+1)/2$). Then, for each $\lambda\in(\R^k)^*$, $(\lambda\circ P\otimes id)K\in H$ because $K$ is the reproducing kernel of $H$. Since the matrix $(P\otimes P)K$ is non-degenerate, this means that for each $Y\in\R^k$, we can find $\lambda\in(\R^k)^*$ such that $P((\lambda\circ P\otimes id)K)=(\lambda\circ P\otimes P)K=Y$. Thus, $P:H\rightarrow\R^k$ is surjective and the fourth assertion holds. Finally, let us assume that the fourth assertion holds. Let $H_P$ be the kernel of $P$ in $H$ and let $H_P^\perp$ be its orthogonal. Then, $f$ can be written as an independent sum $f_1+f_2$ where $H_P$ is the Cameron-Martin space of $f_1$ and $H_P^\perp$ is that of $f_2$. Thus, $Pf=Pf_2$. But $f_2\in H_P^\perp$, which has dimension $k$ and on which $P$ is injective. Moreover, $f_2$ defines a non-degenerate Gaussian vector in $H_P^\perp$. In particular, $Pf_2$ defines a non-degenerate Gaussian vector in $\R^k$ and the second assertion is true.
\end{proof}
\begin{proof}[Proof of Lemma \ref{l:nd_is_open}]
Throughout the proof, we use the characterization of $\Gamma_3^{nd}(\calT)$ given by the second point of Lemma \ref{l:nd_basics}. Let $\gamma\in\Gamma_3(\calT)$ and let $(\gamma_n)_{n\in\N}$ be a sequence of measures in $\Gamma_3(\calT)\setminus\Gamma_3^{nd}(\calT)$. Let us show that $\gamma\notin\Gamma_3^{nd}(\calT)$. For each $n\in\N$ let $f_n$ have law $\gamma_n$ and let $f$ have law $\gamma$. Up to extracting subsequences one can assume either that there exists a sequence $(x_n)_{n\in\calT}$ such that $P^2_{x_n}f_n$ is degenerate for each $n$, or that there exists a sequence $(x_n,y_n)_{n\in\N}$ of pairs of distinct points such that $P^1_{x_n,y_n}f_n$ is degenerate for all $n$. We treat the second case since it is more complex. The first follows a simpler version of the same argument. For each $n\in\N$, there exist two distinct points $x_n,y_n\in\calT$ and two linear forms $\lambda_n,\lambda_n'\in(\R^{d+1})^*$ such that, a.s., $(\lambda_n,-\lambda_n')P_{x_n,y_n}^1 f_n=0$. To begin with, by compactness, we may assume (up to extracting subsequences), that $(x_n,y_n)_n$ converges to some $(x,y)\in\calT$ so that $P^1_{x,y}f$ is degenerate. If $x\neq y$ then we have just proved that $\gamma\notin\Gamma_3^{nd}(\calT)$ so we are done. Assume now that $x=y$. Again, by compactness, we may assume that:
\begin{itemize}
\item If $\eps_n=|x_n-y_n|$, $\eps_n^{-1}(x_n-y_n)=\tau_n$ converges, as $n\rightarrow+\infty$, to some $\tau\in S^{d-1}$.
\item If $\eta_n=|\lambda_n-\lambda_n'|$, $\eta_n^{-1}[\lambda_n-\lambda_n')=\varpi_n$ converges, as $n\rightarrow+\infty$, to some $\varpi\in (\R^{d+1})^*$.
\end{itemize}
For each $n\in\N$, let $a_n=\max(\eps_n,\eta_n)$. Define $P_x^1f:=(f(x),\nabla_xf)$. We have a.s., for each $n\in\N$,
\[
\br{\lambda_n-\lambda_n'} P_{x_n} f_n + \lambda_n'\br{P_{x_n} f_n-P_{y_n}f_n}=0
\]
so that, by applying a Taylor expansion around $y_n$ and using the continuity in $x$ of $\nabla P^1_xf$, we get
\[
(\eta_n/a_n)\br{\varpi(P_x^1f_n)+o(1)}+(\eps_n/a_n)\br{\partial_\tau P^1_xf+o(1)}=0\, .
\]
Taking $n\rightarrow+\infty$, we see that there exist $\alpha,\beta\in\R$ such that, a.s.,
\[
\alpha\varpi P^1_xf+\beta \partial_\tau P^1_xf=0\, .
\]
In particular, this implies that $P^2_xf$ is degenerate so, using Assertion 2 from Lemma \ref{l:nd_basics}, we have $\gamma\in\Gamma_3(\calT)\setminus\Gamma_3^{nd}(\calT)$ as announced.
\end{proof}

\begin{proof}[Proof of Lemma \ref{l:nd_dense_for_q}]
First of all, $C^\infty(\calT)$ is dense in $C^\nu(\calT)$. Now let $q\in C^\infty(\calT)$. Let $\hat{\calT}=R_1^{-1}\Z\times\dots\times R_d^{-1}\Z$ be the dual lattice of $\calT$ and for each $u\in C^\infty(\calT)$ and each $w\in\hat{\calT}$, let $c_w(u)=\frac{1}{\sqrt{|\calT|}}\int_\calT u(x)e^{i\langle w,x\rangle}dx$ be the $w$-th Fourier coefficient of $u$. For all $R>0$, let $B_R=B_{eucl}(0,R)\cap\hat{\calT}$ and let $L^2_R(\calT)$ be the space of functions $u\in L^2(\calT)$ such that $c_w(u)=0$ for $w\notin B_R$. By standard Fourier analysis arguments, for any $u\in C^\infty(\calT)$, the $L^2$-orthogonal projection of $u$ onto $L_R^2(\calT)$ converges to $u$ in $C^\infty$ as $R\rightarrow+\infty$ and if $u$ is real-valued, so is its projection. Next, observe that the projection onto $L^2_R(\calT)$ of $u$ can itself be approximated in $C^\infty$ by (real valued) functions $\tilde{u}$ such that the support of $(c_w(\tilde{u}))_w$ is exactly $B_R$. Finally, observe that if $q\in L^2_R(\calT)$ is such that the support of $(c_w(q))_w$ is $B_R$, then $q*W$ is a random linear combination of cosines and sines of $\langle w,\cdot\rangle$ where the coefficients are independent centered normals whose variance is positive exactly when $w\in B_R$. In particular, by Lemma \ref{l:CM_basis}, the Cameron-Martin space of $q$ is $L^2_R(\calT)$ (although equipped with a scalar product depending on $q$). Hence, using Assertion 4 of Lemma \ref{l:nd_basics}, the problem is now reduced to showing that for all large enough $R>0$, the space $H=L^2_R(\calT)$ satisfies the fourth assertion in Lemma \ref{l:nd_basics} (which is independent of the scalar product on it!). By Lemma \ref{l:nd_is_open} the non-degeneracy condition is open in $\Gamma_3(\calT)$. Since $\cup_{R>0}L^2_R(\calT)$ is dense in $C^\infty(\calT)$, it is enough to find any finite-dimensional space $H\subset C^\infty(\calT)$ on which the maps $P^1_{x,y}$ and $P^2_x$ for $x\neq y$ are surjective (indeed, one can then approximate elements of a basis of $H$ by elements of $\cup_{R>0}L^2_R(\calT)$ which will yield an approximation of the measures in $\Gamma_3(\calT)$). But this follows from the multijet transversality theorem. Indeed, let $m$ be a (large) integer and let $F:\calT\rightarrow\R^m$ be a $C^\infty$ smooth map and let $H_F$ be the space generated by the coordinates $F_i:\calT\rightarrow\R$ for $i\in\{1,\dots,m\}$. The condition that $H_F$ does not satisfy Assertion 4 from Lemma \ref{l:nd_basics} has codimension arbitrarily large in $C^\infty(\calT,\R^m)$ when $m\rightarrow+\infty$. In particular, for large enough values of $m$, the multijet transversality theorem (see Theorem 4.13, Chapter 2 of \cite{golubitsky_guillemin}) applied to the multijet
\begin{align*}
\{(x,y,z)\in\calT^3 \st x\neq y\}&\rightarrow\R^{2d+2+d+d(d+1)/2}\\
(x,y,z)&\mapsto (F(x),F(y),d_xF,d_yF,d_zF,d^2_zF)
\end{align*}
the set of $F$ such that $H_F$ satisfies Assertion 4 of Lemma \ref{l:nd_basics} is dense. In particular it is non-empty so the proof is over for the case of $\calC$.
\end{proof}

\section{Proof of Proposition \ref{p:phase_transition}}\label{s:percolation}

In the present section, we prove Proposition \ref{p:phase_transition} using Theorem \ref{t:main_formula}. The proof should be reminiscent of the strategy used in \cite{br_2006} for Bernoulli percolation. Here, since Definition \ref{d:admissible_set} is a bit restrictive, we cannot completely follow the strategy of \cite{br_2006}. Instead, we study loop percolation events, which are topological, and use them to detect crossings of rectangles on the torus. In Subsection \ref{ss:loop_corollary} we extract a loop percolation estimate from Theorem \ref{t:main_formula}, in Subsection \ref{ss:percolation_estimates} we establish some elementary percolation estimates and finally, in Subsection \ref{ss:the_conclusion}, we combine the results of the two previous subsections to prove Proposition \ref{p:phase_transition}.\\

Throughout this section, we consider a square torus $\calT_R$ of dimension $d=2$ with length $100 R$ for some $R>0$. Given a rectangle of the form $\calR=[a,b]\times[c,d]\subset\R^2$ (or $\calT_R$). We call $\{a\}\times [c,d]$ its \textbf{left side} and $\{b\}\times [c,d]$ its \textbf{right side}. We say that a subset $A\subset\R^2$ (or $A\subset\calT_R$) contains a \textbf{crossing} of $\calR$ from left to right if there exists a continuous path in $A\cap\calR$ joining the left and right hand sides of the rectangle.

\subsection{An application of Theorem \ref{t:main_formula}}\label{ss:loop_corollary}

In the proof of Proposition \ref{p:phase_transition}, we will use the following corollary of Theorem \ref{t:main_formula}:
\begin{corollary}\label{c:loop_percolation}
Let $d\in\N$, $d\geq 2$. Let $\calT=(\R/L\Z)^d$ for some fixed $L>0$ so that $H_1(\calT)$ is canonically isomorphic to $\Z^d$. Let $q\in L^2\br{\calT}$ satisfy (Weak) Condition \ref{cond:regularity}, (Strong) Condition \ref{cond:non-degeneracy} and Condition \ref{cond:symmetry}. Let $\sigma^2=\int_{\calT}q^2(x)dx>0$. Let $W$ be the $L^2$ white noise on $\calT$ and let $f=q*W$. Let $\calL$ be the set of $u\in C^0(\calT)$ such that there exists a smooth loop $\gamma: S^1\rightarrow u^{-1}([0,+\infty[)$ whose homology class in $(n_1,\dots,n_d)\in H_1(\calT)\simeq\Z^d$ satisfies $n_1>0$. Let
\begin{equation}\label{e:alpha}
\alpha(q)=\brb{1+\left|\ln\br{\frac{\|q\|_{L^2(\calT)}}{\|q\|_{L^1(\calT)}}\sqrt{|\calT|}}\right|}^{-1/2}\, .
\end{equation}

Assume first that $d=2$. Then, there exists a universal constant $C<+\infty$ such that for each $\eps>C\alpha(q)$,
\[
\prob\left[f-\sigma\eps\in\calL\right]\leq C\eps^{-2}\alpha(q)^2\, .
\]
Moreover, if $d\geq 2$, then, for each $\eps\geq C\alpha(q)$,
\[
\prob\left[f+\sigma\eps\notin\calL\right]\leq C\eps^{-2}\alpha(q)^2\, .
\]
\end{corollary}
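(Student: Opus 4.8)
The plan is to apply Theorem \ref{t:main_formula} to an admissible set $\calA$ built from the loop set $\calL$, and then convert the variance bound into a probability bound via Chebyshev's inequality. First I would observe that $\calL$ itself is \emph{not} admissible — it is increasing and a topological threshold set, but its complement need not be described the way we want, and more importantly the symmetry of the problem should be exploited. The key structural point is that, because $q$ satisfies Condition \ref{cond:symmetry} (in particular $\frac{\pi}{2}$-rotation invariance on the square torus when $d=2$), the field $f$ is invariant in law under the rotation that swaps the two coordinate directions. Hence if $\calL_1$ denotes the event that $\{u\ge 0\}$ carries a loop of homology class with $n_1>0$ and $\calL_2$ the analogous event for the second coordinate, then $\prob[f+t\in\calL_1]=\prob[f+t\in\calL_2]$ for every $t$. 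One checks (using that a loop with $n_1>0$ and a loop with $n_2>0$, or with $n_1<0$, must intersect, which forces $\{u\ge0\}$ to contain a loop of either type once it contains one of each, or via a standard duality argument on the torus for the complementary excursion set) that $\calA:=\calL_1$ is in fact a topological threshold set and increasing, and that $\thr(f)$ is finite and has a symmetric-enough distribution around $0$; more precisely the crucial input is that $\E[\thr(f)]$ is within $O(\sigma\,\alpha(q))$ of $0$, which follows from the variance bound plus the symmetry/duality observation that $\thr$ and its "dual threshold" (built from $\calL_1^c$, i.e. the level below which $\{u<0\}$ percolates in the complementary direction) cannot both be far from $0$ on the same configuration.

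Second, I would feed $\calA=\calL_1$ into Theorem \ref{t:main_formula}. Since $K(x,x)=q*q(0)=\sigma^2$ and $\int_\calT|q|=\|q\|_{L^1(\calT)}$, the theorem gives
\[
\var(\thr(f))\le C\sigma^2\brb{1+\left|\ln\br{\tfrac{\sigma\sqrt{|\calT|}}{\|q\|_{L^1(\calT)}}}\right|}^{-1}=C\sigma^2\alpha(q)^2,
\]
using that $\sigma=\|q\|_{L^2(\calT)}$ so the logarithm inside $\alpha(q)^{-2}$ matches the one in the theorem. Rescaling: $\thr(f)\le 0$ iff $f\in\calL_1$, and $f-\sigma\eps\in\calL_1$ iff $\thr(f)\le -\sigma\eps$, i.e. iff $\thr(f)-\E[\thr(f)]\le -\sigma\eps-\E[\thr(f)]$. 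Provided $\eps>C'\alpha(q)$ with $C'$ large enough that $\sigma\eps+\E[\thr(f)]\ge \tfrac{1}{2}\sigma\eps$ (this is where the $O(\sigma\alpha(q))$ bound on the mean is used), Chebyshev yields
\[
\prob[f-\sigma\eps\in\calL_1]\le \frac{\var(\thr(f))}{(\sigma\eps/2)^2}\le \frac{4C}{\eps^2}\alpha(q)^2.
\]
Then $\prob[f-\sigma\eps\in\calL]\le \prob[f-\sigma\eps\in\calL_1]+\prob[f-\sigma\eps\in\calL_2]=2\prob[f-\sigma\eps\in\calL_1]$, which gives the first claim (after relabelling $C$). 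For the second claim, with general $d\ge2$, note $\calL\supset\calL_1$ always, so $\{f+\sigma\eps\notin\calL\}\subset\{f+\sigma\eps\notin\calL_1\}=\{\thr(f)>\sigma\eps\}$, and the same Chebyshev estimate applies verbatim; here one only needs $\E[\thr(f)]\le O(\sigma\alpha(q))$ on the relevant side, which again comes from the variance bound together with the fact that for $\eps$ of order $1$ the event $\{f+\sigma\eps\in\calL_1\}$ has probability bounded away from $0$ (by RSW, Lemma \ref{l:rsw}, transported to the torus) — or, self-containedly, from the observation that $\thr(f)$ and $-\thr'(f)$ (the dual threshold) sum to something of order $\sigma\alpha(q)$ in expectation.

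The main obstacle I anticipate is the topological bookkeeping needed to verify that $\calL_1$ is genuinely a topological threshold set in the sense of Definition \ref{d:admissible_set} — i.e. that the property "$\{u\ge0\}$ carries a loop of homology class with first coordinate $>0$" is invariant under ambient isotopy of $\calT$ — and the duality argument on the square torus showing that $\thr$ attached to $\calL_1$ and the dual threshold attached to $\calL_1^c$ nearly coincide, so that $\E[\thr(f)]=O(\sigma\alpha(q))$. The isotopy-invariance is essentially immediate since homology classes of loops are isotopy invariants and the superlevel set moves homeomorphically; the delicate part is the torus duality: on $\calT_R=(\R/100R\Z)^2$ one shows that for a generic $u$, exactly one of "$\{u\ge0\}$ carries a horizontal loop" or "$\{u<0\}$ carries a vertical loop" holds, which pins $\thr(f)$ between two quantities whose difference has mean $O(\sigma\alpha(q))$ by the variance bound applied twice (to $\calL_1$ and to the reflection/complement of $\calL_1$); combined with the $\pi/2$-rotation symmetry swapping the two, this forces $|\E[\thr(f)]|=O(\sigma\alpha(q))$. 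Everything else is a routine Chebyshev computation.
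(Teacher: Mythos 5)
Your plan follows the paper's proof in all essentials: apply Theorem \ref{t:main_formula} to the loop event to get $\var(\thr(f))\le C\sigma^2\alpha(q)^2$, use a deterministic topological statement about excursion sets on the torus together with the symmetries $f\mapsto -f$ and the $\tfrac{\pi}{2}$-rotation to pin the location of $\thr(f)$ near $0$, and finish with Chebyshev. Three remarks. First, your worry about admissibility is unfounded: $\calL$ is admissible exactly as defined (isotopies of $\calT$ act trivially on $H_1$, the set is clearly increasing, and the constants $\pm 1$ witness non-emptiness of $\calL$ and $\calL^c$), and your $\calL_1$ is literally the set $\calL$ of the statement, so the union bound over $\calL_1,\calL_2$ at the end of your argument is vacuous. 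Second, the paper controls a quantile rather than the mean: Lemma \ref{l:algebraic_topology} shows that a.s.\ at least one of the four events $\{\pm f\in\calL^j\}$, $j=1,2$, holds (and, for $d=2$, at least one fails), whence $\prob[\thr(f)\le 0]\ge 1/4$ (and $\prob[\thr(f)\ge 0]\ge 1/4$ for the other tail); an elementary Chebyshev-type statement (Lemma \ref{l:chebychev_bound}) then converts quantile-plus-variance into the tail bound. This is equivalent to your route through $|\E[\thr(f)]|=O(\sigma\,\alpha(q))$, since that mean estimate is itself deduced from the quantile bound and the variance bound. Third, and this is the one substantive correction: your fallback of invoking RSW (Lemma \ref{l:rsw}) to lower-bound $\prob[f+\sigma\eps\in\calL]$ is not available here, because the corollary assumes neither the positivity nor the decay conditions that RSW requires; only the self-contained topological duality argument works, and it is precisely the content of Lemmas \ref{l:algebraic_topology} and \ref{l:loops_and_duality}, whose proof (reduction to an embedded two-torus for $d>2$, then an induction on the contractible components of the zero set combined with the intersection form on $H_1(\T^2)$) is the real work that your plan defers.
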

Clearly, the set $\calL$ is admissible in the sense of Definition \ref{d:admissible_set} so we may apply Theorem \ref{t:sharp_threshold} to it. This theorem is a variance bound. In order to obtain a concentration result such as Corollary \ref{c:loop_percolation}, we need some control on the quantiles of the threshold functional. To this end, in the following lemma, we first study the homology of the excursion sets of (deterministic) functions on $\T^d$. Its proof is an elementary exercise in algebraic topology, but we include it below for completeness.
\begin{lemma}\label{l:algebraic_topology}
Let $f:\T^d\rightarrow\R$ be a smooth function with no critical points at height $0$. Let $A=f^{-1}(]0,+\infty[)$ and $B=f^{-1}(]-\infty,0[)$. Then, there exists a smooth loop $\gamma: S^1\rightarrow A\cup B$ that is non-contractible. More precisely, the following holds. Let $\iota^+:A\rightarrow\T^2$ and $\iota^-:B\rightarrow\T^2$ be the inclusion maps and let $\iota^\pm_*$ be the induced maps in the $H_1$-singular homology.
\begin{enumerate}
\item For any two distinct coordinates $i,j\in\{1,\dots,d\}$, the image of the map $(\iota^+_*\oplus\iota^-_*):(\sigma_1,\sigma_2)\mapsto \iota^+_*(\sigma_1)+\iota^-_*(\sigma_2)$ contains a vector $(n_1,\dots,n_d)\in H_1(\T^d)\simeq\Z^d$ such that $(n_i,n_j)\neq 0$.
\item Assume that $d=2$. Then, one of the three possible assersions holds:
\begin{itemize}
\item The images of $\iota^+_*$ and $\iota^-_*$ are both isomorphic to $\Z$ as $\Z$-modules and are $\R$-colinear.
\item The map $\iota^+_*$ is surjective while the map $\iota^-_*$ is zero.
\item The map $\iota^-_*$ is surjective while the map $\iota^+_*$ is zero.
\end{itemize}
\end{enumerate}
\end{lemma}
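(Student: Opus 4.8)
The plan is to work with the Mayer–Vietoris sequence for the open cover $\{A', B'\}$ of $\T^d$, where $A' = f^{-1}(]-\delta,+\infty[)$ and $B' = f^{-1}(]-\infty,\delta[)$ for small $\delta>0$; since $0$ is a regular value, $A'$ deformation retracts onto $\overline{A}$ (and has the homotopy type of $A$), similarly for $B'$, and $A'\cap B' = f^{-1}(]-\delta,\delta[)$ deformation retracts onto the level set $\Sigma = f^{-1}(0)$, a closed smooth hypersurface. The relevant piece of the sequence reads
\[
H_1(A'\cap B') \xrightarrow{\ (j_+,\,j_-)\ } H_1(A')\oplus H_1(B') \xrightarrow{\ \iota^+_*-\iota^-_*\ } H_1(\T^d) \xrightarrow{\ \partial\ } H_0(A'\cap B') \xrightarrow{\ } H_0(A')\oplus H_0(B').
\]
Since $A'$ and $B'$ are nonempty and each has finitely many components, and every component of $\T^d\setminus\Sigma$ lies in $A$ or $B$, the image of $\partial$ is finite (it lands in the kernel of a map between finitely generated free abelian groups $H_0$). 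Crucially, $H_1(\T^d)\cong\Z^d$ is free of rank $d$, so $\mathrm{Im}(\iota^+_*-\iota^-_*) = \ker(\partial)$ has finite index in $\Z^d$; in particular it is a full-rank sublattice. This is the key algebraic input.

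For part (1): fix distinct coordinates $i,j$. The sublattice $\mathrm{Im}(\iota^+_* - \iota^-_*)$ has rank $d$, so its projection to the $(i,j)$-plane $\Z^2$ cannot be contained in the line $\{(n_i,n_j)=0\}$ — otherwise the whole image would lie in a proper subspace of dimension $\le d-1$, contradicting full rank. Hence there is an element of $\mathrm{Im}(\iota^+_*-\iota^-_*)$, i.e. a class of the form $\iota^+_*(\sigma_1) - \iota^-_*(\sigma_2)$, whose $(i,j)$-components do not both vanish. Replacing $\sigma_2$ by $-\sigma_2$ (the statement uses $\oplus$, i.e. a plus sign) gives the claim; and since $H_1(\T^d)$ is represented by smooth loops, the cycle $(n_1,\dots,n_d)$ with $(n_i,n_j)\neq 0$ is realized by a smooth loop in $A\cup B$, up to homotopy pushing it off $\Sigma$ into $A$ or $B$ piecewise. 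The existence of a single non-contractible smooth loop in $A\cup B$ then follows by taking any nonzero element of the full-rank image.

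For part (2) with $d=2$: here $H_1(\T^2)\cong\Z^2$, and $A = f^{-1}(]0,\infty[)$, $B = f^{-1}(]-\infty,0[)$ are open surfaces with boundary, so each $H_1(A)$, $H_1(B)$ is free abelian; the images $\mathrm{Im}(\iota^+_*)$ and $\mathrm{Im}(\iota^-_*)$ are subgroups of $\Z^2$, hence of rank $0$, $1$, or $2$. I would first note that $\mathrm{Im}(\iota^+_*) + \mathrm{Im}(\iota^-_*)$ equals the full-rank lattice $\mathrm{Im}(\iota^+_*-\iota^-_*)$ (the sign is irrelevant for the sum of images, since both $H_1(A)$ and $H_1(B)$ are groups), so at least one image has rank $\ge 1$. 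Next, the standard fact that for a connected open proper subsurface $U\subsetneq \T^2$ the image of $H_1(U)\to H_1(\T^2)$ is either $0$ or rank $1$ (a proper incompressible subsurface of the torus carries at most one independent homology class, because two transverse essential non-parallel curves on $\T^2$ have nonzero intersection number, yet their complement would be disks — so they cannot both lie in a surface missing some open set): this rules out either image having rank $2$ unless that surface is all of $\T^2$, which is excluded. Summing over components and using that $A$ and $B$ are disjoint (so an essential curve in $A$ and an essential curve in $B$ are disjoint, hence have zero intersection number, hence are $\R$-colinear in $H_1(\T^2)$), I get exactly the trichotomy: either both images are $\cong\Z$ and $\R$-colinear, or one image has rank $\ge 1$ while the other is $0$, and in the latter case the nonzero one must have full rank $2$ (= surjective), since the sum is full rank.

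The main obstacle I anticipate is the surface-topology input in part (2): pinning down precisely why a proper open subsurface of $\T^2$ cannot surject onto $H_1(\T^2)$ unless it is the whole torus, and why essential loops in the two disjoint pieces must be homologically colinear. I would handle this via intersection numbers: if $\gamma\subset A$ and $\gamma'\subset B$ were homology classes with $\langle[\gamma],[\gamma']\rangle \neq 0$ in the intersection pairing on $\T^2$, then $\gamma$ and $\gamma'$ must intersect for any representatives, contradicting $A\cap B=\varnothing$; this forces $[\gamma]$ and $[\gamma']$ to be proportional, which (together with the rank-$\le 1$ bound per connected piece, itself an intersection-pairing argument) yields the trichotomy. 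The realization of homology classes by \emph{smooth} loops is routine (smooth approximation of singular cycles on a manifold, plus the surjectivity of $\pi_1\to H_1$ for $\T^d$).
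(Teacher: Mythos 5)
Your central algebraic claim is false, and it is the load-bearing step of the whole argument. You assert that the image of $\partial:H_1(\T^d)\to H_0(A'\cap B')$ is finite because it lands in the kernel of a map between finitely generated free abelian groups; but such a kernel is itself free abelian and is infinite unless it is trivial, and it is genuinely nontrivial in easy examples. Take $f=\cos(x_1)$ on $\T^2=(\R/2\pi\Z)^2$: then $A$ and $B$ are the two open annuli $\{\cos(x_1)>0\}$ and $\{\cos(x_1)<0\}$, both $\iota^+_*$ and $\iota^-_*$ have image $\Z\cdot(0,1)$, so $\mathrm{Im}(\iota^+_*-\iota^-_*)=\Z\cdot(0,1)$ has rank $1$ and infinite index in $\Z^2$, and $\mathrm{Im}(\partial)\simeq\Z$ (here $A'\cap B'$ has two components while $A'$ and $B'$ are connected). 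More generally $f=\cos(x_1)$ on $\T^d$ gives an image of rank $d-1$. So the ``full-rank sublattice'' input is wrong; this kills your proof of part (1) for $d\geq 3$ (a rank-$1$ image can perfectly well lie in $\{n_i=n_j=0\}$ a priori, so something beyond rank counting is needed) and it also invalidates the two places in part (2) where you invoke full rank of the sum of the images (to get that at least one image is nonzero, and to upgrade ``nonzero'' to ``surjective'' when the other image vanishes). A second, independent error: the ``standard fact'' that a connected proper open subsurface $U\subsetneq\T^2$ has $H_1$-image of rank at most $1$ is false --- $\T^2$ minus a point (or a closed disk) is proper and its $H_1$ surjects onto $H_1(\T^2)$.

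What survives of your proposal is the intersection-form observation: since $A\cap B=\varnothing$, $\mathrm{Im}(\iota^+_*)$ and $\mathrm{Im}(\iota^-_*)$ annihilate each other under the (unimodular) intersection pairing on $H_1(\T^2)$, which correctly yields ``if both are nonzero then both have rank $1$ and are colinear'' and ``if one is surjective the other is zero.'' The paper uses exactly this orthogonality too, but the real content --- that the two images cannot both vanish, and that if one vanishes the other is surjective rather than merely rank $1$ --- is obtained there by a case split on whether $f^{-1}(0)$ contains a non-contractible component (if so, push it off to both sides by the gradient flow to get nonzero classes in both images), followed by an induction on the number of (contractible) components of $f^{-1}(0)$ that fills in a disk at each step; part (1) for general $d$ is then deduced by restricting $f$ to an embedded $2$-torus spanning the $(i,j)$-plane after a small perturbation making $0$ a regular value of the restriction. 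Your Mayer--Vietoris framing could in principle be repaired, but only by actually computing $\ker\partial$ in terms of how the components of $f^{-1}(0)$ separate the components of $A$ and $B$, which is precisely the combinatorial work your proposal skips.
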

Using this lemma, by symmetry and duality arguments, at the end of this subsection, we will deduce the following estimate:
\begin{lemma}\label{l:loops_and_duality}
Assume that $d\geq 2$. Let $f=q*W$  where $q\in L^2(\T^d)$ satisfies (Weak) Condition \ref{cond:regularity}, (Strong) Condition \ref{cond:non-degeneracy} and Condition \ref{cond:symmetry} and $W$ is the $L^2$ white noise on $\T^d$. Recall the set $\calL$ defined in Corollary \ref{c:loop_percolation} and the threshold map $\textup{T}_\calL$ defined in Subsection \ref{ss:topological_setup}. Then,
\[
\prob\left[T_\calL(f)\leq 0\right]\geq 1/4\, .
\]
Moreover, if $d=2$,
\[
\prob\left[T_\calL(f)\geq 0\right]\geq 1/4\, .
\]
\end{lemma}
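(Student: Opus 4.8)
The plan is to deduce Lemma \ref{l:loops_and_duality} from Lemma \ref{l:algebraic_topology} by a symmetry-and-duality argument, exploiting the fact that the field $f=q*W$ and its negative $-f$ have the same law, and that Condition \ref{cond:symmetry} makes the two ``favoured'' directions in the torus interchangeable.

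First I would fix the following notation. Since $q$ satisfies (Strong) Condition \ref{cond:non-degeneracy} and (Weak) Condition \ref{cond:regularity}, $f$ is a.s. a $C^2$ function with no critical point at height $0$ (this is exactly the kind of statement proved via Lemma 11.2.10 of \cite{adler_taylor}, as in the proof of Lemma \ref{l:nd_is_morse}), so Lemma \ref{l:algebraic_topology} applies to $f$ almost surely. The event $\{T_\calL(f)\leq 0\}$ means precisely that $f$ itself (not some strictly positive translate) already contains a smooth loop in $f^{-1}([0,+\infty[)$ whose first homology coordinate $n_1$ is positive; equivalently, up to the null event that a critical value equals $0$, it is the event that the excursion set $A=f^{-1}(]0,+\infty[)$ carries a homology class with $n_1>0$.

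Next comes the core argument. Apply part (1) of Lemma \ref{l:algebraic_topology} with the two coordinates $i=1$, $j=2$: almost surely there is a class in the image of $\iota^+_*\oplus\iota^-_*$ of the form $(n_1,n_2,\dots)$ with $(n_1,n_2)\neq(0,0)$, i.e. either $A$ or $B=f^{-1}(]-\infty,0[)$ carries a homology class whose $(n_1,n_2)$ part is nonzero. Consider the four a.s. events obtained by sign choices: $E_1^+=\{A$ carries a class with $n_1>0\}$, $E_1^-=\{A$ carries a class with $n_1<0\}$, $E_2^+=\{A$ carries a class with $n_2>0\}$, $E_2^-=\{A$ carries a class with $n_2<0\}$, and the analogous four events $F_\bullet^\pm$ for $B$. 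The $\pi/2$-rotation symmetry of $q$ (Condition \ref{cond:symmetry}) induces a measure-preserving transformation of the field that swaps the roles of the first and second coordinates while (after an appropriate reflection, also allowed by Condition \ref{cond:symmetry}) sending $n_1\mapsto n_2$, $n_2\mapsto -n_1$ or similar; thus all four events $E_1^+, E_1^-, E_2^+, E_2^-$ have the same probability, call it $p_A$, and likewise all four $F_\bullet^\pm$ have common probability $p_B$. The crucial input from part (1) is that the union of the eight events $E_\bullet^\pm \cup F_\bullet^\pm$ has probability $1$ (a nonzero $(n_1,n_2)$ has one of its two entries nonzero, of one of two signs, and is carried by $A$ or $B$). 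Hence $4p_A+4p_B\geq 1$. Finally, since $-f$ has the same law as $f$ and swapping $f\leftrightarrow -f$ swaps $A\leftrightarrow B$, we get $p_A=p_B$, so $8p_A\geq 1$, i.e. $p_A\geq 1/8$. But $\prob[T_\calL(f)\leq 0]=\prob[E_1^+]=p_A$ only gives $1/8$, not $1/4$; to upgrade, I would instead observe that by the $f\leftrightarrow -f$ symmetry $\prob[E_1^+]=\prob[F_1^+]$ and that $E_1^+ \subseteq \{T_\calL(f)\le 0\}$ while $F_1^+$ (a loop in $B$ with $n_1>0$) is, by duality for $d=2$, essentially the complementary-type event — so in fact I want to run the counting with only the \emph{four} events $E_1^\pm, F_1^\pm$, using part (2) of the lemma (for $d=2$) to see that in every configuration exactly the ``$A$ wins'' or ``$B$ wins'' dichotomy holds and the first coordinate is nonzero for whichever wins, giving $\prob[E_1^+]+\prob[E_1^-]+\prob[F_1^+]+\prob[F_1^-]=1$ for $d=2$, hence each equals $1/4$ by symmetry; this yields both displayed bounds at once since $\{T_\calL(f)\ge 0\}\supseteq E_1^-\cup F_1^+$ up to null events. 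For general $d\geq 2$, only the first inequality is claimed, and there the weaker counting $\prob[T_\calL(f)\le 0]=\prob[E_1^+]\geq 1/4$ should follow by applying part (1) with a single pair of coordinates together with the $3$-fold (or more) rotational symmetry: with $d\ge 2$ and the order-$\geq 2$ symmetry one gets that the four events attached to the first two coordinates cover a set of probability $1$ and are equiprobable.

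The main obstacle, and the step requiring care, is the symmetry bookkeeping: extracting the exact constant $1/4$ rather than a smaller universal constant. One must verify that Condition \ref{cond:symmetry} (a $\pi/2$-rotation plus an axis reflection) really does generate a group acting transitively on the four oriented ``lattice directions'' $\{\pm e_1,\pm e_2\}$ in $H_1(\calT)\simeq\Z^d$ in a measure-preserving way, and then combine this with the $f\leftrightarrow -f$ symmetry and the clean dichotomy supplied by part (2) of Lemma \ref{l:algebraic_topology} (for $d=2$) so that the relevant events partition a full-measure set into four equiprobable pieces. Once that partition is in hand, the two probability bounds $\prob[T_\calL(f)\le 0]\ge 1/4$ and $\prob[T_\calL(f)\ge 0]\ge 1/4$ both drop out immediately, and the general-$d$ statement follows from part (1) alone, since there we only need a lower bound and may afford the weaker constant implied by covering with the two first-coordinate events and their reflections.
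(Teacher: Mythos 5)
Your argument for the \emph{first} inequality, once the detour through eight events and the constant $1/8$ is discarded, is essentially the paper's: part (1) of Lemma \ref{l:algebraic_topology} shows that almost surely at least one of the four events ``$A$ carries a class with $n_j>0$'', ``$B$ carries a class with $n_j>0$'' ($j=1,2$) occurs; these are equiprobable by the $\pi/2$-rotation invariance of the law of $f$ together with the symmetry $f\mapsto -f$; a union bound then gives $1/4$. (The paper phrases this as: the four thresholds $\textup{T}_{\calL^j}(f)$, $\textup{T}_{\calL^j}(-f)$, $j=1,2$, are identically distributed and their minimum is a.s.\ $\leq 0$.) One bookkeeping remark: $E_1^+$ and $E_1^-$ are \emph{literally the same event}, since reversing the orientation of a loop negates its homology class; so there are only four relevant events, not eight, and no ``weaker constant'' issue arises for general $d$.

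The second inequality is where your proposal genuinely fails. First, the inclusion $\{\textup{T}_{\calL}(f)\geq 0\}\supseteq E_1^-$ is backwards: since $E_1^-=E_1^+$, and a loop contained in the open set $A$ lies in $\{f\geq\delta\}$ for some $\delta>0$, one has $E_1^-\subseteq\{\textup{T}_{\calL}(f)<0\}$. In fact $\{\textup{T}_{\calL}(f)\geq 0\}$ is exactly the \emph{complement} of ``$A$ carries a class with $n_1\neq 0$'', so that is the event whose probability must be bounded below by $1/4$. Second, the identity $\prob[E_1^+]+\prob[E_1^-]+\prob[F_1^+]+\prob[F_1^-]=1$ is false: these are only two distinct events, and they are neither disjoint nor covering. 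The reason they are not disjoint is that part (2) of Lemma \ref{l:algebraic_topology} does \emph{not} give the ``clean dichotomy'' you invoke: in the colinear alternative the common line may be oblique, e.g.\ $\R(1,1)$, in which case \emph{both} $A$ and $B$ carry classes with $n_1\neq0$ and $n_2\neq0$ simultaneously (picture $f$ close to $\cos(x-y)$: both excursion sets are strips of class $(1,1)$), and all four of your events occur at once. The paper instead applies the union bound to the four complementary events ``$A$ (resp.\ $B$) carries no class with $n_j\neq 0$'', $j=1,2$, asserting via part (2) that a.s.\ at least one of them holds; note that this assertion must itself contend with the oblique colinear case, so any rewrite along these lines has to treat that case explicitly rather than assume the images of $\iota^+_*$ and $\iota^-_*$ separate the two coordinate directions.
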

Given Theorem \ref{t:main_formula} and Lemma \ref{l:loops_and_duality}, Corollary \ref{c:loop_percolation} is a direct application of the following elementary lemma.
\begin{lemma}\label{l:chebychev_bound}
Let $X$ be a real random variable with finite variance $a^2$ such that $\prob[X\leq 0]=t^2>0$. Then, for each $\ell>2(a/t)$,
\[
\prob[X\geq \ell]\leq 4a^2\ell^{-2}\, .
\]
\end{lemma}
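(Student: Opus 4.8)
The statement to prove is Lemma \ref{l:chebychev_bound}: if $X$ has finite variance $a^2$ and $\prob[X\le 0]=t^2>0$, then for $\ell>2(a/t)$ we have $\prob[X\ge\ell]\le 4a^2\ell^{-2}$.

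\medskip

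The plan is to show first that the mean $\mu=\E[X]$ cannot be too large, and then apply Chebyshev's inequality. For the first step, I would argue by contradiction or directly: since $\prob[X\le 0]=t^2$, the event $\{X\le 0\}$ forces $X$ to be at distance at least $\mu$ below the mean whenever $\mu>0$, so Chebyshev gives $t^2=\prob[X\le 0]\le\prob[|X-\mu|\ge\mu]\le a^2/\mu^2$, hence $\mu\le a/t$. (If $\mu\le 0$ the bound $\mu\le a/t$ is trivial.) For the second step, fix $\ell>2(a/t)\ge 2\mu$, so that $\ell-\mu\ge\ell/2$. Then
\[
\prob[X\ge\ell]\le\prob[|X-\mu|\ge\ell-\mu]\le\frac{a^2}{(\ell-\mu)^2}\le\frac{a^2}{(\ell/2)^2}=\frac{4a^2}{\ell^2}\, ,
\]
which is exactly the claimed bound.

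\medskip

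There is no real obstacle here; the only point requiring a little care is the case distinction on the sign of $\mu$ in the first step (when $\mu\le 0$ one cannot run the Chebyshev argument to bound $\mu$, but then $\mu\le 0\le a/t$ holds trivially, and moreover $\ell>2(a/t)>0>\mu$ still gives $\ell-\mu\ge\ell>\ell/2$, so the second step goes through unchanged). One should also note that $a>0$ is implicit (if $a=0$ then $X$ is a.s. constant; the hypothesis $\prob[X\le 0]>0$ forces that constant to be $\le 0$, so $\prob[X\ge\ell]=0$ for $\ell>0$ and the inequality is trivial), so WLOG $a>0$ and all divisions are legitimate.
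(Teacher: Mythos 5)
Your proof is correct, and it is the natural argument: the paper states this lemma as ``elementary'' and gives no proof, so there is nothing to diverge from. Your two-step route --- first deducing $\E[X]\leq a/t$ from the hypothesis $\prob[X\leq 0]=t^2$ via Chebyshev, then applying Chebyshev again to $X-\E[X]$ at level $\ell-\E[X]\geq \ell/2$ --- is exactly the intended argument, and your handling of the edge cases $\E[X]\leq 0$ and $a=0$ is careful and correct.
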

Let us now prove Lemmas \ref{l:algebraic_topology} and \ref{l:loops_and_duality}.
\begin{proof}[Proof of Lemma \ref{l:algebraic_topology}]
We first argue that we can restrict ourselves to the case where $d=2$. Let $i,j\in\{1,\dots,d\}$ be two distinct coordinates of $H_1(\T^d)\simeq\Z^d$. Let $\calT\subset\T^d$ be a smooth embedding of the two-torus in $\T^d$ whose image in $H_1(\T^d)$ generates the plane corresponding to the coordinates $i$ and $j$. To prove the lemma, it is enough to find a loop $\gamma:S^1\rightarrow\calT\setminus f^{-1}(0)$ with non-trivial homology. On the one hand, this condition is $C^1$-open in $f$. On the other hand, there exist arbitrarily small $C^\infty$ perturbations of $f$ whose zero set is transversal to $\calT$. But such a perturbation will have $0$ as a regular value when restricted to $\calT$. Thus, we can restrict ourselves to the case $d=2$.\\

We now assume that $d=2$ and prove the second point, since it implies the first. We distinguish two cases. First, we assume that there exists $\gamma_0: S^1\rightarrow f^{-1}(0)$ a non-contractible loop. Since $0$ is a regular value of $f$, we can push $\gamma_0$ into $A$ or $B$ by the gradient flow and obtain non-contractible loops in $\textup{Im}(\iota^+_*)$ and $\textup{Im}(\iota^-_*)$ respectively. But since $\pi_1(\T^2)\simeq H_1(\T^2)\simeq\Z^2$, any non-contractible loop defines a non-trivial homology class so both images are non-zero. On the other hand, since $A\cap B=\emptyset$, they are orthogonal for the intersection form. Thus, $\textup{Im}(\iota^+_*)$ and $\textup{Im}(\iota^-_*)$ belong to a common line in $H_1(\T^2)$.\\

Assume now that $f^{-1}(0)$ has only loops that are contractible in $\T^2$ and let us prove that either $\iota^+_*$ is surjective and $\iota^-_*$ vanishes or vice versa. We proceed by induction on the number of loops. If there are no loops then either $A=\T^2$ or $B=\T^2$ so the statement is true. Assume the lemma is true for functions with $n$ loops in their zero set, all of which are contractible, and assume that $f$ has $n+1$ loops in its zero set (all contractible). Since $n+1\geq 1$ at least one such loop exists and bounds a disk $D\subset\T^2$ containig no other loops. Without loss of generality, we may assume that $f$ is negative on $D$. Let $\chi$ be a smooth function that is positive inside $D$ and whose support intersects $f^{-1}(0)$ only in $\partial D$. Then, for $M<+\infty$ large enough, $\tilde{f}=f+ M\chi$ is positive on a neighborhood of $D$ and $\tilde{f}^{-1}(0)=f^{-1}(0)\setminus\partial D$. By induction, for any homology class $\sigma\in H_1(\T^2)$, we may find $\gamma_1:S^1\rightarrow\T^2\setminus\tilde{f}^{-1}(0)$ a smooth loop with homology class $\sigma$, which, by a small $C^\infty$ perturbation, we can assume intersects $\partial D$ transversally. Since $D$ is a disk, $\gamma_1$ is isotopic to a smooth loop $\gamma:S^1\rightarrow \T^2\setminus\br{\tilde{f}^{-1}(0)\cup D}=\T^2\setminus f^{-1}(0)$. But $\gamma_1$ and $\gamma$ have the same homology class $\sigma$. By the intermediate value theorem, $f$ must have constant sign on $\gamma$, which is the same sign as $\tilde{f}$. In particular, the induction hypothesis implies that this sign does not depend on the choice of $\gamma$. Hence, either $\iota^+_*$ is surjective or $\iota^-_*$ is, and, as in the previous case, since their images are orthogonal for the pairing induced by the intersection form, if one is surjective, the other must vanish.
\end{proof}

\begin{proof}[Proof of Lemma \ref{l:loops_and_duality}]
Assume first that $q\in C^\infty(\T^d)$ (so that $f$ is a.s. $C^\infty$) and that for each $x\in\T^d$, $(f(x),\nabla_xf)$ is a non-degenerate Gaussian vector. For $j=1,2$, let $\calL^{j}$ be the set of $u\in C^0(\T^d)$ such that there exists a smooth loop $\gamma: S^1\rightarrow u^{-1}([0,+\infty[)$ whose homology class in $(n_1,\dots,n_d)\in H_1(\T^d)\simeq\Z^d$ satisfies $n_1>0$ if $j=1$ and $n_2>0$ if $j=2$. In particular, $\calL=\calL^1$. Since for each $x\in\T^d$ $(f(x),\nabla_xf)$ is non-degenerate and since the field $(f,\nabla f)$ is a.s. $C^2$, by Bulinskaya's lemma (see Lemma 11.2.10 of \cite{adler_taylor}), a.s., $f$ has no critical points at height $0$. Clearly, for all $j\in\{1,2\}$, $\calL^j$ is admissible (as in Definition \ref{d:admissible_set}) so the threshold maps $\textup{T}_{\calL^j}$ are well defined. By the first part of Lemma \ref{l:algebraic_topology}, we have a.s.,
\[
\min_{j=1,2}\{\textup{T}_{\calL^j}(f)\wedge\textup{T}_{\calL^j}(-f)\}\leq 0\, .
\]
Since $f$ is centered and symmetric, $\textup{T}_{\calL^j}(f)$ and $\textup{T}_{\calL^j}(-f)$ for $j=1,2$ all have the same law and satisfy
\[
\prob\left[T_\calL(f)\leq 0\right]\geq 1/4\, .
\]
This proves the first part of the lemma. To prove the second part, notice that by the second part of Lemma \ref{l:algebraic_topology},
\[
\max_{j=1,2}\{\textup{T}_{\calL^j}(f)\vee\textup{T}_{\calL^j}(-f)\}\geq 0\, .
\]
Reasoning as before, we get
\[
\prob\left[T_\calL(f)\geq 0\right]\geq 1/4
\]
as announced.
\end{proof}
\subsection{Percolation estimates}\label{ss:percolation_estimates}

The object of this subsection will be to establish Lemmas \ref{l:loop_to_cross}, \ref {l:rsw} and \ref{l:gluing}, which we will use in the next subsection. We will often consider fields $f$ defined on $\R^2$ or $\calT_R$. For each $\ell\in\R$, we will denote by $\prob_\ell$ the probability law of $f_\ell:=f+\ell$. We will only specify which field $f$ the notation $\prob_\ell$ is referring to whenever there is a possible ambiguity.

\paragraph{Positive correlation for continuous crossing events}:\\

The Fortuin-Kasteleyn-Ginibre inequality from Bernoulli percolation (see for instance Theorem 2.4 of \cite{Grimmett}) also holds for increasing percolation events of Gaussian fields with suitable regularity and non-degeneracy assumptions, as long as the crucial $q*q\geq 0$ condition holds. This result is essentially due to Pitt (see \cite{pitt_82}) as explained in \cite{rv_17a}. It will be very useful in the proofs of the results of the rest of this section.
\begin{lemma}[FKG inequality for continuous crossings, see Theorem A.4 of \cite{rv_17a}]\label{l:FKG}
Let $f$ be a stationary Gaussian field on $\R^2$ (or $\T^2$) of the form $f=q*W$ where $q\in L^2$ satisfies (Weak) Conditions \ref{cond:regularity} and \ref{cond:positivity} as well as (Strong) Condition \ref{cond:non-degeneracy} and where $W$ is the $L^2$ white noise. Let $A$ and $B$ be two events obtained as unions and intersections of translations the events $\textup{Loop}_R$, $\textup{Cross}_R$, $\textup{Cross}_R^\dagger$ and $\textup{Circ}_x(r_1,r_2)$ defined in Subsection \ref{ss:percolation_estimates} below. Assume that $A$ and $B$ are increasing events\footnote{Here for $A$ to be increasing means that $f\in A\Rightarrow f+\ell\in A$ for any $\ell\geq 0$.}. Then,
\[
\P\brb{A\cap B}\geq \P[A]\P[B]\, .
\]
\end{lemma}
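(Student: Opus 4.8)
\textbf{Proof plan for Lemma \ref{l:FKG}.}

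The plan is to reduce the continuous statement to the finite-dimensional Gaussian FKG inequality of Pitt \cite{pitt_82}, exactly as in the proof of Theorem A.4 of \cite{rv_17a}, and then to verify that the extra hypotheses there are all subsumed by the hypotheses of the present lemma. The starting point is Pitt's theorem: if $(X_1,\dots,X_n)$ is a centered non-degenerate Gaussian vector whose covariance matrix has all entries non-negative, then any two bounded increasing measurable functions of $(X_1,\dots,X_n)$ are positively correlated. The key observation is that the $q*q\geq 0$ condition (Weak Condition \ref{cond:positivity}) is precisely what guarantees that the relevant covariances are non-negative: if we sample $f=q*W$ at finitely many points $x_1,\dots,x_n$, the covariance matrix is $\br{q*q(x_i-x_j)}_{i,j}$, which has non-negative entries; and Strong Condition \ref{cond:non-degeneracy} (which holds by Remark \ref{rk:non-dg_is_free}, or by assumption) together with stationarity guarantees non-degeneracy of these vectors for generic point configurations.

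First I would record that each of the building-block events $\textup{Loop}_R$, $\textup{Cross}_R$, $\textup{Cross}_R^\dagger$, $\textup{Circ}_x(r_1,r_2)$, and hence any event $A$, $B$ obtained from them by the stated unions, intersections and translations, is an increasing event in $C^0$ that depends only on the sign pattern $\{f\geq 0\}$ restricted to a fixed compact set; being a union/intersection of such crossing-type events it is also measurable. Next I would approximate: for a fine grid (or, more robustly, a triangulation) of mesh $\delta$ inside the relevant compact region, let $A_\delta$ (resp. $B_\delta$) be the corresponding discretized crossing event determined by the values of $f$ at the finitely many grid points, defined so that it is still increasing. Since $f$ is a.s. continuous and, by the non-degeneracy assumption, a.s. has no zeros of $f$ accumulating pathologically on the boundary of the region (one invokes here the standard fact, as in Appendix A of \cite{rv_17a}, that the boundary event $\{f\text{ has a zero on the grid skeleton}\}$ has probability zero), one gets $\one_{A_\delta}\to\one_A$ and $\one_{B_\delta}\to\one_B$ almost surely as $\delta\to 0$. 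For each fixed $\delta$, Pitt's inequality applied to the Gaussian vector of grid values gives $\P[A_\delta\cap B_\delta]\geq\P[A_\delta]\P[B_\delta]$, and passing to the limit by dominated convergence yields $\P[A\cap B]\geq\P[A]\P[B]$.

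The main obstacle is not the finite-dimensional FKG step, which is a black box, but the approximation: one must ensure that the discretized events genuinely converge to the continuous ones, i.e. that a continuous crossing of a rectangle by $\{f\geq 0\}$ is detected by a sufficiently fine grid and conversely that a grid-crossing forces (up to a null event) a genuine continuous crossing. This is where the regularity ($C^2$, hence $f$ is locally Lipschitz on compacts) and the non-degeneracy of $(f(x),\nabla_x f)$ enter: they guarantee that a.s. $0$ is a regular value of $f$ on the compact region, so that $\{f\geq 0\}$ is a.s. a manifold-with-boundary whose topology is stable under small perturbations and is captured exactly by a fine enough triangulation. Since this argument is carried out in detail in the proof of Theorem A.4 of \cite{rv_17a}, and our hypotheses — (Weak) Condition \ref{cond:regularity}, (Strong) Condition \ref{cond:non-degeneracy}, (Weak) Condition \ref{cond:positivity} — are exactly (or imply) those required there, I would simply invoke that result rather than reproduce the approximation scheme; the content of the present lemma is the bookkeeping that the events $A$, $B$ in the statement are of the admissible increasing form to which \cite{rv_17a} applies.
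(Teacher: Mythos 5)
Your proposal matches the paper's proof, which simply invokes Theorem A.4 of \cite{rv_17a} (itself the Pitt-plus-discretization argument you sketch) and observes that the stronger regularity hypotheses stated there arise only from Lemma A.9 of that paper, while the FKG proof itself uses only the conditions assumed here. One small imprecision: your hypotheses do not \emph{imply} the stated hypotheses of Theorem A.4 (they are weaker); the correct justification, as the paper notes, is that the proof of that theorem only uses the weaker conditions.
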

\begin{proof}
This result is presented in \cite{rv_17a} (see Theorem A.4 therein) under some slightly stronger regularity conditions. These conditions come from Lemma A.9 of \cite{rv_17a} but the proof only uses the conditions presented here.
\end{proof}

\paragraph{From loops to widthwise crossings}:\\

Let $\calR_R^\dagger=[0,3R]\times[0,4R]$, which we can see as a subset of $\calT_R$ or $\R^2$. In Lemma \ref{l:loop_to_cross}, we compare the following two percolation events:
\begin{itemize}
\item Let $\textup{Loop}_R$ be the event that there exists $\gamma:S^1\rightarrow f_\ell^{-1}(]0,+\infty[)$ such that the homology class $(n_1,n_2)$ of $\gamma$ in $H_1(\calT_R)\simeq\Z^2$ satisfies $n_1\neq 0$.
\item Let $\textup{Cross}^\dagger_R$ be the event that $f_\ell^{-1}(]0,+\infty[)$ contains a crossing of $\calR_R^\dagger$. We will call this event a \textbf{widthwise crossing}.
\end{itemize}
\begin{lemma}\label{l:loop_to_cross}
Let $f=q*W$  where $q\in L^2(\calT_R)$ satisfies (Weak) Condition \ref{cond:regularity}, (Strong) Condition \ref{cond:non-degeneracy} and Condition \ref{cond:symmetry} and $W$ is the $L^2$ white noise on $\calT_R$. Then, there exists a universal constant $N\in\N$, $N\geq 1$ such that the following holds. For each $p\in[0,1]$, let $\phi_N(p)=1-(1-p)^\frac{1}{N}$ (so that in particular, $\lim_{p\rightarrow 1}\phi_N(p)=1$). Then, for all $R>0$ and $\ell\in\R$,
\begin{equation}\label{e:loop_to_cross}
\prob_\ell\brb{\textup{Cross}^\dagger_R}\geq \phi_N\br{\prob_\ell\brb{\textup{Loop}_R}}\, .
\end{equation}
\end{lemma}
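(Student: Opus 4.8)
The plan is to deduce the widthwise-crossing estimate from the loop estimate via a standard "covering the torus by rotated rectangles" argument, using the FKG inequality (Lemma \ref{l:FKG}) together with the symmetry assumption (Condition \ref{cond:symmetry}) and stationarity of $f$ on $\calT_R$. The key geometric observation is that any non-contractible loop $\gamma$ of $\textup{Loop}_R$, having homology class $(n_1,n_2)$ with $n_1\neq 0$, must "wind around" the torus in the first-coordinate direction, and hence must cross every "vertical strip" of the torus of width $3R$; more precisely, for a suitable fixed collection of $N$ congruent rectangles $\calR^{(1)},\dots,\calR^{(N)}$ (each a rotate/translate of $\calR_R^\dagger=[0,3R]\times[0,4R]$) whose interiors cover $\calT_R$, the event $\textup{Loop}_R$ forces a widthwise crossing of at least one of the $\calR^{(i)}$. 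The constant $N$ depends only on how many such rectangles are needed to cover the torus of side $100R$ after rescaling — it is a universal integer. I expect this covering step to be the main obstacle: one must be careful that a loop with $n_1\neq 0$ genuinely produces a crossing of a rectangle \emph{in the short (width $3R$) direction}, not merely the long direction, and this is where Condition \ref{cond:symmetry} enters, since it allows us to also include $\frac{\pi}{2}$-rotated copies of $\calR_R^\dagger$ in the cover so that a loop winding "horizontally" on the torus still hits a widthwise crossing of some congruent rectangle.

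Concretely, the steps are as follows. First I fix a finite collection of rectangles $\calR^{(1)},\dots,\calR^{(N)}$, each congruent to $\calR_R^\dagger$ up to a translation and a rotation by a multiple of $\pi/2$, such that every non-contractible loop in $\calT_R$ with homology class $(n_1,n_2)$ satisfying $n_1\neq 0$ contains a widthwise crossing of at least one $\calR^{(i)}$; here $N$ is chosen (and fixed once and for all, independently of $R$) large enough that the widthwise cores of the $\calR^{(i)}$ separate $\calT_R$ into contractible pieces in the relevant direction. Writing $C^{(i)}$ for the event "$f_\ell^{-1}(]0,+\infty[)$ contains a widthwise crossing of $\calR^{(i)}$", the geometric step gives the containment $\textup{Loop}_R\subset\bigcup_{i=1}^N C^{(i)}$, hence $\prob_\ell[\textup{Loop}_R]\leq\sum_i\prob_\ell[C^{(i)}]$. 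Second, by stationarity of $f$ on the torus together with Condition \ref{cond:symmetry} (which makes $f$ invariant in law under the $\pi/2$-rotations and reflections mapping $\calR_R^\dagger$ onto the $\calR^{(i)}$), every $\prob_\ell[C^{(i)}]$ equals $\prob_\ell[\textup{Cross}_R^\dagger]$, so $\prob_\ell[\textup{Loop}_R]\leq N\,\prob_\ell[\textup{Cross}_R^\dagger]$.

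This already yields a bound of the right shape but with the wrong dependence near $p=1$; to get the precise form $\prob_\ell[\textup{Cross}_R^\dagger]\geq\phi_N(\prob_\ell[\textup{Loop}_R])=1-(1-\prob_\ell[\textup{Loop}_R])^{1/N}$ I instead pass to complements and use the FKG inequality. The point is that $(\textup{Cross}_R^\dagger)^c$ is a \emph{decreasing} event, hence by Lemma \ref{l:FKG} applied to the complements (equivalently, the square-root trick), $\prob_\ell\big[\bigcap_{i=1}^N (C^{(i)})^c\big]\geq\prod_{i=1}^N\prob_\ell[(C^{(i)})^c]=\big(1-\prob_\ell[\textup{Cross}_R^\dagger]\big)^N$, using again that each $\prob_\ell[C^{(i)}]=\prob_\ell[\textup{Cross}_R^\dagger]$. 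Combined with the geometric containment $\textup{Loop}_R\subset\bigcup_i C^{(i)}$, i.e. $\bigcap_i (C^{(i)})^c\subset (\textup{Loop}_R)^c$, this gives $1-\prob_\ell[\textup{Loop}_R]\geq\prob_\ell\big[\bigcap_i (C^{(i)})^c\big]\geq\big(1-\prob_\ell[\textup{Cross}_R^\dagger]\big)^N$, which rearranges exactly to \eqref{e:loop_to_cross}. The one subtlety to check is that the events $C^{(i)}$ and $\textup{Loop}_R$ are of the type to which Lemma \ref{l:FKG} applies (unions/intersections of translates of $\textup{Loop}_R$, $\textup{Cross}_R$, $\textup{Cross}_R^\dagger$, $\textup{Circ}$), which holds by construction since each $C^{(i)}$ is a rotated/translated copy of $\textup{Cross}_R^\dagger$ and the rotation is built into the setup via Condition \ref{cond:symmetry}.
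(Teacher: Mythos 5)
Your proposal follows the paper's proof almost step for step: the same covering family (translates by $R\Z^2$ and $\frac{\pi}{2}$-rotations of $\calR_R^\dagger$ inside $\calT_R$), the same reduction of all the crossing probabilities to $\prob_\ell[\textup{Cross}_R^\dagger]$ via stationarity and Condition \ref{cond:symmetry}, and the same square-root trick (FKG applied to the decreasing complements) to turn the containment $\textup{Loop}_R\subset\bigcup_i C^{(i)}$ into the bound $1-\prob_\ell[\textup{Loop}_R]\geq(1-\prob_\ell[\textup{Cross}_R^\dagger])^N$.

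The one point you leave open --- and correctly flag as the main obstacle --- is the containment itself, and here your heuristic is not quite the paper's argument and would need repair. It is not true that the loop $\gamma$ \emph{itself} must contain a widthwise crossing in the same orientation as its winding: a loop of class $(1,m)$ with $m$ large can traverse each width-$3R$ vertical strip while winding vertically, so no sub-arc fits inside a $3R\times 4R$ window; one really does need the rotated rectangles, and even then the bookkeeping of fitting sub-arcs into grid-aligned windows is delicate. The paper sidesteps this by arguing at the level of the excursion set rather than the loop: it first uses the non-degeneracy of $(f(x),\nabla_x f)$ and Bulinskaya's lemma to ensure $f_\ell$ a.s. has no critical point at height zero, and then invokes the duality statement of Lemma \ref{l:algebraic_topology} to conclude that, on $\textup{Loop}_R$, the set $f_\ell^{-1}(]0,+\infty[)$ (not merely $\gamma$) almost surely contains a widthwise crossing of one of the $20000$ rectangles. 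So to complete your argument you should either carry out the combinatorial covering argument carefully with both orientations, or, as the paper does, route through the a.s. regularity of the zero level and the homological duality lemma. Everything after the containment is correct as you wrote it.
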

\begin{proof}
Consider $(\calR_j)_{1\leq j\leq 20000}$ the collection of images of $[0,3R]\times[0,4R]$ under the action of translations by vectors of $R\Z^2$ and $\frac{\pi}{2}$ rotations, in $\calT_R$. Since for each $x\in\calT_R$, $(f(x),\nabla_xf)$ is non-degenerate, by Bulinskaya's lemma (see Lemma 11.2.10 of \cite{adler_taylor}), $f$ has a.s. no critical point at height zero. By duality (i.e., by Lemma \ref{l:algebraic_topology}), on the event $\textup{Loop}_R$, $f_\ell^{-1}(]0,+\infty[)$ must (with probability one) contain a widthwise crossing of one of the rectangles $\calR_j$. We call these events $C_j$:
\[
\textup{Loop}_R\Rightarrow\cup_{j=1}^{20000} C_j\, .
\]
Since the events $C_j$ are increasing crossing events, by Lemma \ref{l:FKG}, we get
\[
\P_\ell\brb{\neg\textup{Loop}_R}\geq \prod_{j=1}^{20000}\P_\ell[\neg C_j]=\br{1-\P_\ell\brb{\textup{Cross}_R^\dagger}}^{20000}\, .
\]
In particular, \eqref{e:loop_to_cross} holds for $N=20000$.
\end{proof}

\paragraph{From widthwise crossings to lengthwise crossings}:\\
Let $\calR_R=[0,6R]\times [0,4R]\subset\R^2$. We introduce the two following families of events:
\begin{itemize}
\item For each $0<r_1\leq r_2<+\infty$ and $x\in\R^2$, let $\textup{Ann}_x(r_1,r_2)=\{y\in\R^2\, :\, r_1\leq |x-y|\leq r_2\}$ and let $\textup{Circ}_x(r_1,r_2)$ be the event that there exists a continuous map $\gamma:S^1\rightarrow \textup{Ann}_x(r_1,r_2)\cap f_\ell^{-1}\br{]0,+\infty[}$ whose image separates the two connected components of $\R^2\setminus\textup{Ann}_x(r_1,r_2)$. We call such events \textbf{circuits} and, for brevity, we write $\textup{Circ}(r_1,r_2)=\textup{Circ}_0(r_1,r_2)$.
\item Let $\textup{Cross}_R$ be the event that there exists a continuous map $\gamma:[0,1]\rightarrow\calR_R\cap f_\ell^{-1}(]0,+\infty[)$ such that $\gamma(0)$ belongs to the left side of $\calR_R$ and $\gamma(1)$ belongs to its right side. We will call this event a \textbf{lengthwise crossing}.
\end{itemize}
We will use the following result from \cite{mv_18}, which is an application of Russo-Seymour-Welsh theory to Gaussian fields.
\begin{lemma}[Arm decay, see Theorem 4.7 of \cite{mv_18}]\label{l:rsw}
Let $f=q*W$  where $q\in L^2(\T^d)$ satisfies (Weak) Conditions \ref{cond:regularity}, \ref{cond:non-degeneracy} and \ref{cond:positivity}, as well as Conditions \ref{cond:symmetry} and \ref{cond:decay}, and where $W$ is the $L^2$ white noise on $\R^2$. Then, for all $\ell\geq 0$
\[
\lim_{L\rightarrow+\infty}\inf_{r\geq 1}\prob_\ell\brb{\textup{Circ}(r,L r)}=1\, .
\]
\end{lemma}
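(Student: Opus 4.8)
Since the statement of Lemma~\ref{l:rsw} is identical to Theorem 4.7 of \cite{mv_18}, the proof I would give in the paper simply invokes that reference. For completeness, let me describe how this uniform circuit estimate is deduced from the basic box-crossing form of the Russo--Seymour--Welsh theorem for Gaussian fields (see \cite{bg_16}, \cite{rv_17a}, \cite{mv_18}), combined with the FKG inequality of Lemma~\ref{l:FKG}.

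The one nontrivial input I would rely on is the standard RSW box-crossing lower bound: under the hypotheses of the lemma there is an absolute constant $c_0>0$ such that, for every $r\geq 1$, the probability that $\calD_\ell=f_\ell^{-1}(]0,+\infty[)$ contains a crossing of a rectangle of fixed aspect ratio in its long direction is at least $c_0$, uniformly in $r\geq 1$ and $\ell\geq 0$. Uniformity in $\ell$ comes from monotonicity: for $\ell\geq 0$ one has $\calD_\ell\supset\calD_0$, so it suffices to have the bound at $\ell=0$; this is where the positivity, non-degeneracy, symmetry and decay assumptions enter, through the RSW machinery of \cite{bg_16}, \cite{rv_17a}, \cite{mv_18}. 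Establishing this bound is the \emph{main obstacle}; everything that follows is a routine FKG gluing.

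First I would produce a single circuit at a fixed scale ratio. Fix an absolute constant $L_0$ (say $L_0=4$). The Euclidean annulus $\textup{Ann}_0(r,L_0r)$ contains four axis-parallel rectangles of bounded aspect ratio arranged around the origin so that, if $\calD_\ell$ crosses each of them in its long direction, then $\calD_\ell$ contains a circuit in $\textup{Ann}_0(r,L_0r)$ separating the two boundary circles. Each of the four crossing events is increasing, so by Lemma~\ref{l:FKG},
\[
\prob_\ell\brb{\textup{Circ}(r,L_0r)}\geq c_0^4=:c_1>0\, ,
\]
uniformly in $r\geq 1$ and $\ell\geq 0$.

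Then I would amplify using nested annuli. For $k\in\N$ the annuli $\textup{Ann}_0(rL_0^i,rL_0^{i+1})$, $i=0,\dots,k-1$, are pairwise disjoint up to boundary spheres and all contained in $\textup{Ann}_0(r,rL_0^k)$, and a circuit in any one of them is a circuit in $\textup{Ann}_0(r,rL_0^k)$. Hence the event of having no circuit in $\textup{Ann}_0(r,rL_0^k)$ is contained in the intersection of the $k$ decreasing events ``no circuit in $\textup{Ann}_0(rL_0^i,rL_0^{i+1})$'', each of probability at most $1-c_1$ by the previous step applied at scale $rL_0^i\geq 1$. Applying Lemma~\ref{l:FKG} to the complementary increasing events gives
\[
\prob_\ell\brb{\neg\,\textup{Circ}(r,rL_0^k)}\leq (1-c_1)^k\, .
\]
Given $L\geq 1$, taking $k=\lfloor\log_{L_0}L\rfloor$ and using $\textup{Ann}_0(r,rL_0^k)\subset\textup{Ann}_0(r,Lr)$ yields $\prob_\ell\brb{\textup{Circ}(r,Lr)}\geq 1-(1-c_1)^k$; taking the infimum over $r\geq 1$ and letting $L\to+\infty$ proves the lemma.
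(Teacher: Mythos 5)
The paper offers no proof of this lemma: it is quoted verbatim as Theorem 4.7 of \cite{mv_18}, with only a remark tracing it back to the RSW theory of \cite{bg_16}, \cite{bm_18} and \cite{rv_17a}. So your opening move --- simply invoking the reference --- is exactly what the paper does and is all that is required here.

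The derivation you then sketch, however, contains a genuine error at the amplification step, and it is worth seeing why, because the flaw sits precisely where the real difficulty of Theorem 4.7 of \cite{mv_18} lies. Writing $C_i$ for the (increasing) circuit events in the nested annuli $\textup{Ann}_0(rL_0^i,rL_0^{i+1})$, you need the bound $\prob_\ell\brb{\bigcap_i \neg C_i}\leq \prod_i\prob_\ell\brb{\neg C_i}\leq(1-c_1)^k$. But the FKG inequality of Lemma~\ref{l:FKG}, applied to the family of decreasing events $\neg C_i$, gives \emph{positive} correlation, i.e.\ $\prob_\ell\brb{\bigcap_i \neg C_i}\geq \prod_i\prob_\ell\brb{\neg C_i}$ --- the opposite of what you need. (This is the same direction as the square-root trick used in Lemma~\ref{l:loop_to_cross}, which lower-bounds the probability of an intersection of decreasing events.) In Bernoulli percolation the analogous step is free because the annuli are disjoint and the events genuinely independent; for a Gaussian field with long-range correlations it is not, and the whole point of Condition~\ref{cond:decay} in the hypotheses is to make a quasi-independence (or sprinkled decorrelation) estimate available so that events supported on well-separated annuli are \emph{approximately} independent. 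That decorrelation argument is the substantive content of Theorem 4.7 of \cite{mv_18} and cannot be replaced by FKG. Your first two steps (the uniform box-crossing bound from RSW, monotonicity in $\ell$, and the four-rectangle construction of a single circuit at fixed aspect ratio via FKG) are standard and correct; it is only the passage from a uniformly positive circuit probability to a circuit probability tending to $1$ that requires the additional, and harder, input.
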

This result is actually a variation of Theorem 1.4 of \cite{bg_16}, which was also studied in \cite{bm_18} and \cite{rv_17a}. All of these results are adaptations of the Russo-Seymour-Welsh theory in planar Bernoulli percolation (see for instance Theorem 10.89 of \cite{Grimmett}). In the following lemma, which is inspired by Section 4 of \cite{att_2018}, we combine circuit events and widthwise crossings to produce lengthwise crossings.
\begin{lemma}\label{l:gluing}
Let $f=q*W$  where $q\in L^2(\R^2)$ satisfies (Weak) Conditions \ref{cond:regularity} and \ref{cond:decay}, (Strong) Condition \ref{cond:non-degeneracy} and Condition \ref{cond:symmetry} and $W$ is the $L^2$ white noise. Fix $L\in\N$, $L\geq 1$ and for each $p\in[0,1]$, let $\psi_L(p)=1-(1-p)^{\frac{1}{16L^2}}$ (so that in particular $\lim_{p\rightarrow 1}\psi_L(p)=1$). Then, for each $R>0$ and $\ell\in\R$,
\[
\prob_\ell\brb{\textup{Cross}_{LR}}\geq \psi_L\br{\prob_\ell\brb{\textup{Cross}^\dagger_R}}^2\prob_\ell\brb{\textup{Circ}(R,LR)}\, .
\]
\end{lemma}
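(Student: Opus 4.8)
The plan is a renormalisation (gluing) argument in the spirit of Section~4 of \cite{att_2018}: exhibit a deterministic event, built from translated and $\tfrac\pi2$-rotated copies of $\textup{Cross}^\dagger_R$ together with a single translated copy of $\textup{Circ}(R,LR)$, that forces $\textup{Cross}_{LR}$, and then turn this into a probability bound using the FKG inequality (Lemma~\ref{l:FKG}) and the square-root trick. The three numerical features of the statement should fall out of the construction as follows: it uses exactly one circuit event, which accounts for the single factor $\prob_\ell\brb{\textup{Circ}(R,LR)}$; the widthwise crossings split into two groups, ``left'' and ``right'', each a translate of the other (hence of equal probability, by stationarity of $f$), which accounts for the square of $\psi_L$; and each group involves at most $16L^2$ copies of $\textup{Cross}^\dagger_R$, which accounts for the exponent $1/(16L^2)$ in $\psi_L$.

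Concretely, I would fix a point $x$ inside $\calR_{LR}=[0,6LR]\times[0,4LR]$ at distance at least $LR$ from $\partial\calR_{LR}$, and place inside $\calR_{LR}$ a ``scaffold'': a family of at most $16L^2$ rectangles, each a translate or $\tfrac\pi2$-rotate of $\calR_R^\dagger=[0,3R]\times[0,4R]$, consecutive ones overlapping in squares, so that whenever the corresponding widthwise crossings all occur, $f_\ell^{-1}(]0,+\infty[)$ contains a continuous path from the left side of $\calR_{LR}$ into $B(x,R)$ and another from $B(x,R)$ to the right side of $\calR_{LR}$ --- the $\tfrac\pi2$-rotated copies being used precisely to force consecutive crossings to meet along their overlaps. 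The one spot where no overlap argument is available is the hub $x$, and there the circuit takes over: any circuit $\gamma$ contained in $\textup{Ann}_x(R,LR)\cap f_\ell^{-1}(]0,+\infty[)$ separates $B(x,R)$ from the complement of $B(x,LR)$, so each of the two paths above, upon reaching $B(x,R)$ from outside $B(x,LR)$, meets $\gamma$; concatenating yields a left-to-right crossing of $\calR_{LR}$. Writing $G_1,G_2$ for the ``left'' and ``right'' groups of widthwise crossings and $\textup{Circ}'$ for the relevant translate of $\textup{Circ}(R,LR)$, all these events are built from the events of Lemma~\ref{l:FKG} by translations, unions and intersections, so FKG gives
\[
\prob_\ell\brb{\textup{Cross}_{LR}}\ \ge\ \prob_\ell[G_1\cap G_2\cap\textup{Circ}']\ \ge\ \prob_\ell[G_1]\,\prob_\ell[G_2]\,\prob_\ell\brb{\textup{Circ}(R,LR)}.
\]
The bound $\prob_\ell[G_i]\ge\psi_L\br{\prob_\ell\brb{\textup{Cross}^\dagger_R}}$ is then the square-root trick: $G_i$ should be engineered so that each widthwise crossing of the standard rectangle $\calR_R^\dagger$ forces one of $16L^2$ equiprobable (by Condition~\ref{cond:symmetry} and stationarity) translates/rotates of $G_i$, whence the union of those copies has probability at least $p:=\prob_\ell\brb{\textup{Cross}^\dagger_R}$, and FKG applied to the complementary decreasing events yields $\bigl(1-\prob_\ell[G_i]\bigr)^{16L^2}\le 1-p$, i.e.\ $\prob_\ell[G_i]\ge 1-(1-p)^{1/(16L^2)}=\psi_L(p)$.

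The main obstacle is the geometric heart of the argument: designing the scaffold so that the widthwise crossings, glued through their square overlaps, really do produce the two arms from $\partial\calR_{LR}$ into $B(x,R)$ while using only a single circuit at the hub, and keeping the number of rectangles per arm below $16L^2$ (and arranging the two arms as translates of one another so that each contributes a clean factor $\psi_L(p)$). Everything else is routine: the FKG step is immediate from Lemma~\ref{l:FKG}, and the passage to the $\psi_L$-bound is the standard square-root trick. The construction is a continuum transcription of a classical planar-percolation gluing, and the delicate points are purely topological --- verifying that the union of the relevant pieces of $f_\ell^{-1}(]0,+\infty[)$ contains the asserted crossing curve.
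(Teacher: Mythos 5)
Your high-level gluing mechanism (two arms meeting a single circuit at a hub, then FKG to factorize the three events) matches the paper's, and the topological part of your plan is sound. But there is a genuine gap in how you obtain the factor $\psi_L\br{\prob_\ell\brb{\textup{Cross}^\dagger_R}}$ for each arm. You define $G_i$ as the \emph{intersection} of up to $16L^2$ translated/rotated copies of $\textup{Cross}^\dagger_R$ (a chain of overlapping rectangles), and then claim that a single widthwise crossing of $\calR_R^\dagger$ forces one of $16L^2$ equiprobable copies of $G_i$, so that the square-root trick gives $\br{1-\prob_\ell[G_i]}^{16L^2}\leq 1-\prob_\ell\brb{\textup{Cross}^\dagger_R}$. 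That containment is false: a configuration can contain one crossing of $\calR_R^\dagger$ and no other crossings anywhere, so it cannot belong to any event that is an intersection of many crossings of disjointly located rectangles. The square-root trick needs the rare-side event to be covered by a union of copies of the frequent-side event, and your logical direction is reversed. What your scaffold actually yields, via FKG, is $\prob_\ell[G_i]\geq \prob_\ell\brb{\textup{Cross}^\dagger_R}^{16L^2}$, a bound of the form $p^{16L^2}$ rather than $1-(1-p)^{1/(16L^2)}$; that would still suffice for the application in Subsection \ref{ss:the_conclusion}, but it is not the stated inequality and cannot be upgraded to it by the route you describe.

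The paper avoids chaining altogether. It writes $\calR_{LR}$ as two side-by-side copies of $\calR^\dagger_{LR}$ and decomposes the single event $\textup{Cross}^\dagger_{LR}$ according to which of the $4L$ segments of length $R$ on its left side and which of the $4L$ segments on its right side contain the endpoints of the crossing; this exhibits $\textup{Cross}^\dagger_{LR}$ as a union of $16L^2$ pinned crossing events $\textup{Cross}^\dagger_0(j_1,j_2)$, to which the square-root trick (FKG on the complements) applies in the correct direction, producing a pair $(j_1,j_2)$ with $\prob_\ell\brb{\textup{Cross}^\dagger_0(j_1,j_2)}\geq\psi_L\br{\prob_\ell\brb{\textup{Cross}^\dagger_{LR}}}$; the symmetries of Condition \ref{cond:symmetry} allow the same pair to serve for the adjacent copy. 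The circuit $\textup{Circ}_{x_2}(R,LR)$ centered on the common pinned segment then glues the two pinned crossings into a crossing of $\calR_{LR}$. To repair your argument, replace the chain of small rectangles by this endpoint-pinning decomposition of a single large widthwise crossing; your FKG factorization and the one-circuit gluing step then go through unchanged.
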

\begin{proof}
Throughout the proof, we fix $R>0$ and $L\in\N$, $L\geq 1$. Observe that $\calR_{LR}$ is the union of two copies of $\calR_{LR}^\dagger$ with a common side. We will build a crossing of $\calR_{LR}$ from particular crossings of these two copies of $\calR_{LR}^\dagger$ using a circuit to glue them together. In order to do so, we must first define these particular crossings to have high enough probability. For each $j\in\{0,4L-1\}$, let $I_j=[jR,(j+1)R]$. For each $j_1,j_2\in\{0,4L-1\}$ and each $x=(a,b)\in\R^2$, let $\textup{Cross}^\dagger_x(j_1,j_2)$ be the event that there exists a continuous path $\gamma:[0,1]\rightarrow\br{x+\calR^\dagger_{LR}}\cap f_\ell^{-1}\br{]0,+\infty[}$ such that $\gamma(0)\in\{a\}\times \br{b+I_{j_1}}$ and $\gamma(1)\in\{a+3LR\}\times \br{b+I_{j_2}}$. Then, $\textup{Cross}^\dagger_{LR}=\cup_{j_1,j_2\in\{0,4L-1\}}\textup{Cross}_0^\dagger(j_1,j_2)$ (see Figure \ref{fig:special_cross}). Since $q*q\geq 0$, by the FKG inequality (Lemma \ref{l:FKG}), we deduce that:
\[
\prod_{j_1,j_2\in\{0,4L-1\}}\br{1-\prob_\ell\brb{\textup{Cross}_0^\dagger(j_1,j_2)}}\leq 1-\prob_\ell\brb{\textup{Cross}^\dagger_{LR}}\, .
\]
\begin{figure}\label{fig:special_cross}
\begin{center}
\includegraphics[width=0.3\textwidth]{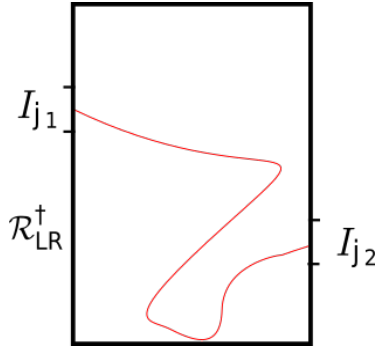}
\end{center}
\caption{An illustration of the event $\textup{Cross}_x^\dagger(j_1,j_2)$.}
\end{figure}
In particular, by stationarity and the symmetry assumption on $q$, there exist $j_1,j_2\in\{0,4L-1\}$, such that for each $x\in\R^2$,
\begin{equation}\label{e:gluing_lemma_1}
\prob_\ell\brb{\textup{Cross}_x^\dagger(j_1,j_2)}=\prob_\ell\brb{\textup{Cross}_x^\dagger(j_2,j_1)}\geq \psi_L\br{\prob_\ell\brb{\textup{Cross}^\dagger_{LR}}}\, .
\end{equation}
Having chosen these indices $j_1$ and $j_2$, we now turn to the gluing construction. Let $x_1=(3LR,0)$ and $x_2=(3LR,j_2R)$. Observe that a path that connects $\{3LR\}\times I_{j_2}$ to the left-hand side of $\calR$ inside $\calR$ or to the right-hand side of $x_1+\calR$ inside $x_1+\calR$ must intersect any loop separating the two connected components of $\R^2\setminus\textup{Ann}_{x_2}(R,LR)$. In particular (see Figure \ref{fig:special_gluing}),
\begin{equation}\label{e:gluing_lemma_2}\textup{Cross}^\dagger_0(j_1,j_2)\cap\textup{Cross}^\dagger_{x_1}(j_2,j_1)\cap \textup{Circ}_{x_2}(R,LR)\subset \textup{Cross}_{LR}\, .
\end{equation}
\begin{figure}\label{fig:special_gluing}
\begin{center}
\includegraphics[width=0.3\textwidth]{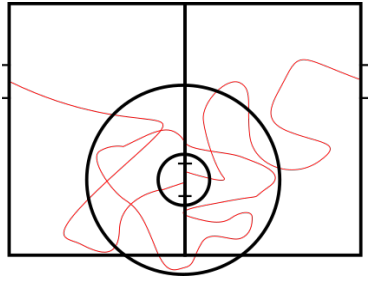}
\end{center}
\caption{Two well chosen widthwise crossings and a circuit form a lengthwise crossing.}
\end{figure}
By symmetry assumption on $q$ and \eqref{e:gluing_lemma_1}, the probability of the left-hand side of \eqref{e:gluing_lemma_2} is at least
\[
\psi_L\br{\prob_\ell\brb{\textup{Cross}^\dagger_{LR}}}^2\prob_\ell\brb{\textup{Circ}(R,LR)}
\]
which concludes the proof.
\end{proof}

\subsection{The conclusion}\label{ss:the_conclusion}

In this subsection we prove Proposition \ref{p:phase_transition} by relying on the results established in Subsections \ref{ss:loop_corollary} and \ref{ss:percolation_estimates}.\\

We will use the rectangles $\calR_R^\dagger=[0,3R]\times[0,4R]$ and $\calR_R=[0,6R]\times [0,4R]$ as well as the events $\textup{Cross}^\dagger_R$, $\textup{Cross}_R$, $\textup{Loop}_R$ and $\textup{Circ}(r_1,r_2)$ introduced in the previous subsection. More precisely, we will consider events defined in exactly the same way but with $f_\ell$ replaced by another field. We choose to keep the same notation for clarity and specify which field we are considering whenever any ambiguity is possible.

\begin{proof}[Proof of Proposition \ref{p:phase_transition}]
Throughtout the proof, we fix $\ell>0$.\\

\textit{Step 1: defining approximations of the field}\\
In this step we approximate the field by a periodic field. To this end, we introduce $\chi\in C^\infty(\R^2)$ equal to one on $[-6,6]^2$ and compactly supported in $[-12,12]^2$ and letting $q_R=q\chi(\cdot/R)$. Next we let $\tilde{q}_R:\calT_R\rightarrow\R$ be defined by $\tilde{q}_R(y)=\sum_{x\textup{ mod }100R\Z^2=y} q_R(x)$ (here the sum is actually finite because $\chi$ is compactly supported). We then introduce $W$ and $\widetilde{W}$ the white noise on $\R^2$ and $\calT_R$ respectively. Finally, we let $f_\ell=q*W+\ell$, $f_{\ell,R}=q_R*W+\ell$ and $\tilde{f}_{\ell,R}=q_R*\widetilde{W}+\ell$, whose laws we denote respectively by $\prob_\ell$, $\prob_{\ell,R}$ and $\widetilde{\prob}_{\ell,R}$ respectively. We make the three following observations. First, the definition of $q_R$ implies that $f_{\ell,R}$ and $\tilde{f}_{\ell,R}$ have the same law on the rectangle $\calR_R$. Second, by Remarks \ref{rk:sup_1} and \ref{rk:sup_2},
\begin{equation}\label{e:sharp_1}
\lim_{R\rightarrow+\infty}\E\brb{\sup_{x\in\calR_R}|f_{\ell,R}(x)-f_\ell(x)|}=0\, .
\end{equation}
Indeed, Condition \ref{cond:decay} implies that $\|q-q_R\|_{W^{1,2}}$ decay polynomially in $R$ so, by Remark \ref{rk:sup_1}, $\|(q-q_R)*(q-q_R)\|_{C^{1;1}}$ also decays polynomially in $R$. Therefore, applying Remark \ref{rk:sup_2} to the field $(q-q_R)*W$ on the ball of radius $1000R$ centered at $0$ yields \eqref{e:sharp_1}. Third, the vector $(f(0),\nabla_0f)$ is non-degenerate, and so are the vectors $(f_{\ell,R}(0),\nabla_0f_{\ell,R})$ and $(\tilde{f}_{\ell,R}(0),\nabla_0\tilde{f}_{\ell,R})$ for all large enough values of $R$. For the first vector this follows from Remark \ref{rk:non-dg_is_free}. Since the non-degeneracy condition is open, the condition must also be true for the other vectors by continuity.

\textit{Step 2: proving widthwise crossings are likely}\\
We wish to apply Corollary \ref{c:loop_percolation} to $\tilde{f}_{0,R}$ with $\eps=\ell/3$. In particular, we will use the set $\calL$ and the functional $\alpha(\cdot)$ introduced in the statement of the corollary. Since $q\in L^1(\R^2)$ is non-zero, neither is $q_R$ for large enough values of $R$. Moreover, notice that $\tilde{f}_{0,R}+\ell/3\in\calL$ holds if and only if $\textup{Loop}_R$ holds for $\tilde{f}_{\ell/3,R}$. Finally, if we define $\alpha(\tilde{q}_R)$ as in \eqref{e:alpha}, we have $\alpha(\tilde{q}_R)\xrightarrow[R\to+\infty]{}+\infty$. Indeed, by Condition \ref{cond:decay}, for any $p\in\{1,2\}$ and for some $\beta>2$ and $C<+\infty$,
\begin{align*}
\int_{\calT_R}\left|\sum_{v\in\Z^2\setminus\{0\}}q(x+100R v)\chi(x/R)\right|^pdx&\leq C\|\chi\|_\infty^p\int_{|x|\leq 24 R}\left(\sum_{v\in\Z^2\setminus\{0\}}|x+100R v|^{-\beta}\right)^pdx\\
&\leq 1/50)2^{\beta p}(24)^2C\|\chi\|_\infty^p R^{2-\beta p}\sum_{v\in\Z^2\setminus\{0\}}|v|^{-\beta}\\
&=O(R^{2-\beta p})
\end{align*}
so that $\|\tilde{q}_R\|_{L^p(\calT_R)}\xrightarrow[R\to+\infty]{}\|q\|_{L^p(\R^2)}$. Since $\sqrt{\calT_R}\xrightarrow[R\to+\infty]{}+\infty$, we indeed have $\lim_{R\rightarrow+\infty}\alpha(\tilde{q}_R)=+\infty$ as $R\rightarrow+\infty$. Thus, Corollary \ref{c:loop_percolation} shows that
\[
\lim_{R\rightarrow+\infty}\widetilde{\prob}_{\ell/2,R}\brb{\textup{Loop}_R}=1\, .
\]
By Lemma \ref{l:loop_to_cross}, which applies because of the third observation made in Step 1 of the present proof, and thanks to the first observation, we therefore have $\lim_{R\rightarrow+\infty}\prob_{\ell/2,R}\brb{\textup{Cross}^\dagger_R}=1$. Finally, combining this estimate with \eqref{e:sharp_1} yields:
\begin{equation}\label{e:sharp_2}
\lim_{R\rightarrow+\infty}\prob_\ell\brb{\textup{Cross}^\dagger_R}=1\, .
\end{equation}

\textit{Step 3: from widthwise crossings to lengthwise crossings}\\
Let $\eps>0$. By Lemma \ref{l:rsw}, there exists $L\in\N$, $L\geq 1$ such that for each $r\geq 1$,
\begin{equation}\label{e:sharp_3}
\prob_\ell\brb{\textup{Circ}(r,Lr)}\geq (1-\eps)^{\frac{1}{3}}\, .
\end{equation}
Now, let $\psi_L$ be as in Lemma \ref{l:rsw}. By \eqref{e:sharp_2}, there exists $r_0\geq 1$ such that for all $r\geq r_0$,
\begin{equation}\label{e:sharp_4}
\psi_L\br{\prob_\ell\brb{\textup{Cross}^\dagger_r}}\geq (1-\eps)^{\frac{1}{3}}\, .
\end{equation}
Set $R_0=Lr_0$. Then, for each $R\geq R_0$, $r:=R/L\geq r_0\geq 1$ so by Lemma \ref{l:gluing} (which applies by the third observation of Step 1), \eqref{e:sharp_3} and \eqref{e:sharp_4} we have
\[
\prob_\ell\brb{\textup{Cross}_R}\geq \psi_L\br{\prob_\ell\brb{\textup{Cross}^\dagger_r}}^2\prob_\ell\brb{\textup{Circ}(r,Lr)}\geq 1-\eps\, .
\]
Since this is true for all $\eps>0$, the proof is over.
\end{proof}

\appendix

\section{Standard results on Gaussian fields}\label{s:appendix}

The results presented in this appendix are well known, though we were unable to find a reference presenting them in the setting of the present paper.

\subsection{Orthogonal expansions and the Cameron-Martin space}

Let $\gamma$ be a Gaussian measure on $\calT$ and let $f$ be a Gaussian field on $\calT$ with law $\gamma$, defined on some probability space $(\Omega,\calF,\prob)$ and let $G\subset\Omega$ be the $L^2$-closure of the set of variables $f(x)$ for $x\in\calT$. Then, the \textbf{Cameron-Martin space} of $f$ is the space $H_\gamma$ of functions $h_\xi:x\mapsto\E[\xi f(x)]$ for $\xi\in G$. If $H_\gamma$ is finite dimensional, then $f$ has the law of a standard Gaussian vector in $H_\gamma$ in the following sense. Let $(e_j)_{j\in\{1,\dots,n\}}$ be an orthonormal basis in $H_\gamma$. First, $f\in H$ a.s. Moreover, the family $(\langle f,e_j\rangle)_{j\in\{1,\dots,n\}}$ is a family of independent standard normals.
\begin{lemma}\label{l:CM_basis}
Let $\xi_1,\dots,\xi_n$ be indepenent standard normals and let $e_1,\dots,e_n\in C^0(\calT)$ be such that $(e_k)_{k\in\{1,\dots,n\}}$ is linearly independent. Let $f=\sum_{j=1}^n \xi_j e_j$. Then, the family $(e_k)_{k\in\{1,\dots,n\}}$ is an orthonormal basis for the Cameron-Martin space of $f$.
\end{lemma}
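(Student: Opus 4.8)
The claim is that if $f = \sum_{j=1}^n \xi_j e_j$ with $\xi_j$ i.i.d. standard normals and $(e_k)$ linearly independent in $C^0(\calT)$, then $(e_k)$ is an orthonormal basis of the Cameron–Martin space $H_\gamma$ of $f$, where $H_\gamma = \{h_\xi : \xi \mapsto \E[\xi f(\cdot)], \ \xi \in G\}$ and $G$ is the $L^2(\Omega)$-closure of $\mathrm{span}\{f(x) : x \in \calT\}$. The plan is to identify $G$ explicitly, compute the map $\xi \mapsto h_\xi$ on it, and read off both that the image equals $\mathrm{span}(e_1,\dots,e_n)$ and that the inner product transported from $G$ makes $(e_k)$ orthonormal.

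First I would observe that since each $f(x) = \sum_j \xi_j e_j(x)$ lies in $V := \mathrm{span}(\xi_1,\dots,\xi_n) \subset L^2(\Omega)$, which is finite-dimensional hence closed, we have $G \subseteq V$. Conversely, linear independence of $(e_k)$ in $C^0(\calT)$ gives points $x_1,\dots,x_n \in \calT$ such that the matrix $(e_k(x_m))_{k,m}$ is invertible (standard: a set of linearly independent continuous functions cannot all vanish to first order simultaneously — pick points inductively so the evaluation vectors stay independent); then the $n$ variables $f(x_m) = \sum_k \xi_k e_k(x_m)$ are related to $(\xi_1,\dots,\xi_n)$ by an invertible linear transformation, so each $\xi_j \in \mathrm{span}\{f(x_m)\} \subseteq G$. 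Hence $G = V$, and in particular $(\xi_1,\dots,\xi_n)$ is an orthonormal basis of $G$ since the $\xi_j$ are i.i.d. standard normal, i.e. $\E[\xi_i \xi_j] = \delta_{ij}$.

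Next I would compute $h_{\xi_j}(x) = \E[\xi_j f(x)] = \E[\xi_j \sum_k \xi_k e_k(x)] = \sum_k e_k(x)\,\E[\xi_j\xi_k] = e_j(x)$, so $h_{\xi_j} = e_j$. Since $G = \mathrm{span}(\xi_1,\dots,\xi_n)$ and $\xi \mapsto h_\xi$ is linear, its image is exactly $\mathrm{span}(e_1,\dots,e_n)$, which is therefore $H_\gamma$. The Cameron–Martin inner product on $H_\gamma$ is by definition the one making $\xi \mapsto h_\xi$ an isometry from $G$ (with its $L^2(\Omega)$ inner product) onto $H_\gamma$; since $(\xi_1,\dots,\xi_n)$ is an orthonormal basis of $G$ and maps to $(e_1,\dots,e_n)$, the latter is an orthonormal basis of $H_\gamma$. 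This completes the argument.

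The only genuinely non-formal point is the existence of evaluation points $x_1,\dots,x_n$ making $(e_k(x_m))$ invertible; everything else is unwinding definitions. I would handle that by an inductive selection: having found $x_1,\dots,x_{m-1}$ with the $m-1$ evaluation functionals linearly independent on $\mathrm{span}(e_1,\dots,e_n)$, the subspace they annihilate is nonzero (dimension $n-m+1 \geq 1$), and it cannot consist of functions all vanishing at every point of $\calT$, so a new point $x_m$ can be adjoined. This is the one place to be slightly careful, but it is entirely standard linear algebra together with the fact that a nonzero continuous function is nonzero somewhere.
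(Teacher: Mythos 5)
Your proof is correct and follows essentially the same route as the paper: select evaluation points making the matrix $(e_k(x_m))_{k,m}$ invertible, deduce that each $e_k$ lies in and that the family generates the Cameron--Martin space, and read off orthonormality from the isometry $\xi\mapsto h_\xi$. If anything, your inductive selection of the points $x_m$ is more careful than the paper's, which asserts points with $e_j(x_k)=\delta_{jk}$ exactly (something linear independence alone does not literally provide without passing to dual linear combinations).
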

\begin{proof}
Let $H$ be the Cameron-Martin space of $f$. Let $k\in\{1,\dots,n\}$. Since $(e_j)_{j\in\{1,\dots,n\}}$ is linearly independent, there exists $x_k\in\calT$ such that for each $j\in\{1,\dots,n\}$, $e_j(x)=\delta_{jk}$. In particular, $\E[f(x_k)f]=e_k$ belongs to $H$. Conversly, since for each $x\in\calT$, $\E[f(x)f]=\sum_{k=1}^n e_k(x)e_k$, $(e_j)_{j\in\{1,\dots,n\}}$ generates $H$. Now, let $j,k\in\{1,\dots,n\}$. Then, $\langle e_j,e_k\rangle=\E[f(x_j)f(x_k)]=e_j(x_k)=\delta_{jk}$ so that $(e_l)_{l\in\{1,\dots,n\}}$ is an orthonormal family in $H$.
\end{proof}

\subsection{Supremum bounds}

We $\calT$ equip with the distance associated to the metric inherited from $\R^d$ which we denote by $\textup{dist}$. For each $l\in\N$ and $\nu\in]0,1[$ let $C^{l,\nu}(\calT)$ be the space of $l$ times differentiable functions $q:\calT\rightarrow\R$ whose derivatives of order $l$ are of H\"older class $\nu$. We equip this space with the usual norm:
\[
\|q\|_{C^{l,\nu}(\calT)}=\max_{|\alpha|\leq l}\brb{\max_{x\in\calT}|\partial^\alpha q(x)|+\sup_{x\neq y}\textup{dist}(x,y)^{-\nu}|\partial^\alpha q(x)-\partial^\alpha q(y)|}\, .
\]
Moreover, we let $C^{l,\nu;l,\nu}(\calT)$ be the space of functions $K:\calT\times\calT\rightarrow\R$ such that for each $\alpha,\beta\in\N^d$ with $|\alpha|,|\beta|\leq l$, $\partial^{\alpha,\beta}K$ exists and is of H\"older class $C^\nu$ in the joint variables. We equip this space with the norm $\|K\|_{C^{l,\nu;l,\nu}(\calT)}$ defined as follows:
\[
\max_{|\alpha|,|\beta|\leq l}\brb{\sup_{x,y\in\calT} |\partial^{\alpha,\beta}K(x,y)|+\sup_{(x,y)\neq(x',y')}|\partial^{\alpha,\alpha}K(x,y)-\partial^{\alpha,\alpha}K(x',y')|\br{\textup{dist}(x,y)+\textup{dist}(x',y')}^{-\nu}}\, .
\]
Here and below, $\partial^{\alpha,\beta}$ means the partial derivative $\partial^\alpha$ is applied to the first variable while $\partial^\beta$ is applied to the second variable. The following lemma is a well known fact on the sample path regularity of Gaussian fields.
\begin{lemma}[see Corollary 1.7 of \cite{azais_wschebor} together with Appendix A of \cite{ns_16}]\label{l:basic_regularity}
Let $l\in\N$ and $\nu\in]0,1[$. Let $f$ be a Gaussian field on $\calT$ with covariance function $K\in C^{l,\nu;l,\nu}(\calT)$. Then, $f$ is almost surely of class $C^l$. Moreover, for each $\alpha,\beta\in\N^d$ such that $|\alpha|,|\beta|\leq l$ and each $x,y\in\calT$
\[
\E\brb{\partial^\alpha f(x)\partial^\beta f(y)}=\partial^{\alpha,\beta}K(x,y)\, .
\]
\end{lemma}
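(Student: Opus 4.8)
The plan is to argue by induction on $l$, with the inductive statement formulated for $\R^m$-valued centered Gaussian fields (the covariance being then matrix-valued, and $C^{l,\nu;l,\nu}$ read entrywise). The base case $l=0$ is Kolmogorov's continuity criterion: for a scalar field with covariance $K\in C^{0,\nu;0,\nu}(\calT)$ and $x,y\in\calT$, one has $\E\brb{(f(x)-f(y))^2}=K(x,x)-2K(x,y)+K(y,y)\leq C\,\textup{dist}(x,y)^\nu$ with $C$ depending only on $\|K\|_{C^{0,\nu;0,\nu}(\calT)}$, by joint Hölder continuity of $K$. Gaussianity upgrades this to $\E\brb{|f(x)-f(y)|^p}\leq C_p\,\textup{dist}(x,y)^{p\nu/2}$ for every $p\geq1$, and choosing $p>2d/\nu$, Kolmogorov's theorem on the compact torus $\calT$ furnishes a modification with a.s.\ continuous (even Hölder) sample paths; the vector-valued case follows componentwise. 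We work from now on with this modification, and the covariance formula is trivial for $l=0$.

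For the inductive step, let $l\geq1$, assume the claim for $l-1$, and take $K\in C^{l,\nu;l,\nu}(\calT)\subset C^{1,\nu;1,\nu}(\calT)$. For each $i\in\{1,\dots,d\}$, the difference quotients $t^{-1}(f(\cdot+te_i)-f(\cdot))$ converge in $L^2(\prob)$ as $t\to0$: for fixed $x,y$ the quantity $\E\brb{\tfrac{f(x+te_i)-f(x)}{t}\cdot\tfrac{f(y+se_i)-f(y)}{s}}$ equals the double difference quotient of $K$ in the $e_i$-direction in each variable, which converges to $\partial^{e_i,e_i}K(x,y)$ since $K\in C^{1,\nu;1,\nu}$; the same kind of computation yields $\E[f(x)g_i(y)]=\partial^{0,e_i}K(x,y)$ and, via a two-parameter limit, $\E[g_i(x)g_j(y)]=\partial^{e_i,e_j}K(x,y)$, where $g_i$ denotes the $L^2$-limit. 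Thus $(f,g_1,\dots,g_d)$ is a centered Gaussian field whose covariance function has entries $\partial^{\alpha,\beta}K$, $|\alpha|,|\beta|\leq1$, hence lies in $C^{l-1,\nu;l-1,\nu}(\calT)\subset C^{0,\nu;0,\nu}(\calT)$; by the base case it admits a modification with continuous sample paths, again denoted $(f,g_1,\dots,g_d)$.

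The heart of the argument is the identification of $g_i$ with the pathwise partial derivative $\partial_i f$. Fixing $i$, $x$, and $t$ small, expanding the square and substituting $\E[f(y)f(z)]=K$, $\E[f(y)g_i(z)]=\partial^{0,e_i}K$, $\E[g_i(y)g_i(z)]=\partial^{e_i,e_i}K$, then integrating $K$ along the segment $[x,x+te_i]$ in the relevant variable (using $K\in C^{1,\nu;1,\nu}$ and Fubini), gives $\E\brb{\big(f(x+te_i)-f(x)-\int_0^t g_i(x+se_i)\,ds\big)^2}=0$. Hence $f(x+te_i)-f(x)=\int_0^t g_i(x+se_i)\,ds$ a.s.\ for each fixed $(x,t)$; restricting to a countable dense set of pairs and using that both sides are a.s.\ continuous in $(x,t)$ (the left by continuity of $f$, the right by continuity of $g_i$), the identity holds a.s.\ for all $(x,t)$ at once. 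The fundamental theorem of calculus then shows that $f$ is a.s.\ partially differentiable in every coordinate direction with $\partial_i f=g_i$, so $f$ is a.s.\ $C^1$ and $\E[\partial_i f(x)\partial_j f(y)]=\partial^{e_i,e_j}K(x,y)$. Now $(f,\partial_1 f,\dots,\partial_d f)$ is a centered Gaussian field with covariance function in $C^{l-1,\nu;l-1,\nu}(\calT)$, so the induction hypothesis applies: it is a.s.\ $C^{l-1}$, and the covariance of any two of its partial derivatives is the corresponding partial of its covariance function. Since $f$ is $C^1$ and each $\partial_i f$ is then $C^{l-1}$, $f$ is a.s.\ $C^l$; and writing $\alpha=\mu+e_i$, $\beta=\rho+e_j$ when $\alpha,\beta\neq0$ (with trivial modifications otherwise) yields $\E[\partial^\alpha f(x)\partial^\beta f(y)]=\partial^{\mu,\rho}(\partial^{e_i,e_j}K)(x,y)=\partial^{\alpha,\beta}K(x,y)$ for all $|\alpha|,|\beta|\leq l$. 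This closes the induction.

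The only genuinely delicate point is this identification of the mean-square derivative $g_i$ with the pathwise derivative $\partial_i f$: checking that the displayed $L^2$-norm vanishes requires a careful accounting of the first partials of $K$ via Fubini and the fundamental theorem of calculus, and promoting the resulting one-per-$(x,t)$ family of almost-sure identities to a single statement valid simultaneously for all $(x,t)$ relies on the sample-path continuity secured in the base case. Everything else is a routine application of Kolmogorov's criterion together with multi-index bookkeeping.
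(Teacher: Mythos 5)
The paper does not prove this lemma; it simply cites Corollary 1.7 of Aza\"is--Wschebor and Appendix A of Nazarov--Sodin, and your argument is precisely the standard proof those references give: Kolmogorov's continuity criterion for the base case, followed by an induction in which the mean-square derivative $g_i$ is identified with the pathwise derivative via the vanishing of $\E\brb{\bigl(f(x+te_i)-f(x)-\int_0^t g_i(x+se_i)\,ds\bigr)^2}$. Your computation of that $L^2$-norm (the three terms cancel exactly by the fundamental theorem of calculus applied to $\partial^{0,e_i}K$ and $\partial^{e_i,e_i}K$) is correct, as is the upgrade from a per-$(x,t)$ almost-sure identity to a simultaneous one via continuity and a countable dense set, so the proposal is a valid self-contained substitute for the citation.
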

Next we need a lemma to control the covariance of a stationary field in terms of its covariance square root.
\begin{lemma}\label{l:sup_convolution}
Let $l\in\N$ and $\nu\in]0,1[$, let  $q\in C^{l,\nu}(\calT)$. Then, the convolution $\kappa(x)=q*q(x)=\int_\calT q(y) q(x-y)dy$ exists there exists $C=C(l,d,\nu)<+\infty$ such that
\[
\|\kappa\|_{C^l(\calT)}\leq C\|q\|_{W^{l,2}}^2\textup{ and }\|\kappa\|_{C^{l,\nu}(\calT)}\leq C \|q\|_{C^{l,\nu}(\calT)}\|q\|_{W^{l,1}(\calT)}\, .
\]
\end{lemma}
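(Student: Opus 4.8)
The plan is to reduce the statement to three elementary ingredients: differentiation under the integral sign (to write derivatives of $\kappa$ as convolutions of $q$ with its derivatives), the Cauchy--Schwarz / H\"older inequalities for convolutions on the compact group $\calT$, and a scale-invariant interpolation between Lipschitz and $\nu$-H\"older bounds. First I would record that, since $\calT$ is compact, $q\in C^{l,\nu}(\calT)\subset L^\infty(\calT)\subset L^1(\calT)\cap L^2(\calT)$, so $\kappa=q*q$ is well defined (indeed continuous, being the convolution of two $L^2$ functions). Next, since $q$ has continuous derivatives up to order $l$, iterating the theorem on differentiation under the integral sign gives, for every multi-index $\alpha$ with $|\alpha|\le l$,
\[
\partial^\alpha\kappa(x)=\int_\calT q(y)\,(\partial^\alpha q)(x-y)\,dy=(q*\partial^\alpha q)(x)\,.
\]

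For the first inequality, Cauchy--Schwarz applied to this identity, together with translation invariance of Lebesgue measure on $\calT$, gives $|\partial^\alpha\kappa(x)|\le\|q\|_{L^2(\calT)}\,\|\partial^\alpha q\|_{L^2(\calT)}\le\|q\|_{W^{l,2}(\calT)}^2$ for all $x\in\calT$ and all $|\alpha|\le l$; taking the supremum over $x$ and the maximum over $\alpha$ yields $\|\kappa\|_{C^l(\calT)}\le C\|q\|_{W^{l,2}(\calT)}^2$. For the second inequality I would bound separately, for each $|\alpha|\le l$, the sup part and the $\nu$-H\"older seminorm of $\partial^\alpha\kappa$. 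The sup part is $|\partial^\alpha\kappa(x)|\le\|\partial^\alpha q\|_{L^\infty(\calT)}\,\|q\|_{L^1(\calT)}\le\|q\|_{C^{l,\nu}(\calT)}\|q\|_{W^{l,1}(\calT)}$ by H\"older. For the seminorm, writing
\[
\partial^\alpha\kappa(x)-\partial^\alpha\kappa(x')=\int_\calT q(y)\,\big[(\partial^\alpha q)(x-y)-(\partial^\alpha q)(x'-y)\big]\,dy
\]
and using that translation is an isometry of $\calT$, so $\textup{dist}(x-y,x'-y)=\textup{dist}(x,x')$, it suffices to control the $\nu$-H\"older constant of $\partial^\alpha q$ uniformly over $|\alpha|\le l$. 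When $|\alpha|=l$ this constant is $\le\|q\|_{C^{l,\nu}(\calT)}$ by definition of the norm; when $|\alpha|<l$ the function $\partial^\alpha q$ is Lipschitz with constant $\le\|q\|_{C^{l,\nu}(\calT)}$ and bounded by $\|q\|_{C^{l,\nu}(\calT)}$, and the scale-free interpolation $\min(2A,Bt)\le(2A)^{1-\nu}B^\nu t^\nu$ converts this into a $\nu$-H\"older bound with constant $\le C(\nu)\|q\|_{C^{l,\nu}(\calT)}$. Substituting gives $|\partial^\alpha\kappa(x)-\partial^\alpha\kappa(x')|\le C(\nu)\|q\|_{C^{l,\nu}(\calT)}\|q\|_{L^1(\calT)}\,\textup{dist}(x,x')^\nu$, and assembling the sup and seminorm contributions over all $|\alpha|\le l$ produces $\|\kappa\|_{C^{l,\nu}(\calT)}\le C(l,d,\nu)\|q\|_{C^{l,\nu}(\calT)}\|q\|_{W^{l,1}(\calT)}$, the dependence on $d$ entering only through the bookkeeping over multi-indices.

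I do not expect a genuine obstacle here; the statement is a routine analysis fact. The one point I would make sure to write out carefully is that the constant stays independent of $\calT$: this is precisely why one passes from a Lipschitz bound on the lower-order derivatives to a H\"older bound via the scale-invariant inequality $\min(2A,Bt)\le(2A)^{1-\nu}B^\nu t^\nu$ (rather than via $t\le t^\nu$, which fails for $t>1$ and would introduce a dependence on the diameter of the torus), and why the isometry property of translations on $\calT$ is used. Everything else --- justifying the differentiation under the integral up to order $l$, and the two convolution estimates --- is standard.
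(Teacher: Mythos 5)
Your proof is correct and follows essentially the same route as the paper: write $\partial^\alpha\kappa$ as a convolution, use Cauchy--Schwarz for the $C^l$ bound, and use H\"older continuity of one factor against the $L^1$ norm of the other for the seminorm. The only (harmless) difference is that the paper places the derivatives on the $L^1$ factor and exploits the $\nu$-H\"older continuity of $q$ itself, which makes your interpolation step for $|\alpha|<l$ unnecessary --- indeed it is already unnecessary with the paper's definition of $\|\cdot\|_{C^{l,\nu}}$, which includes the $\nu$-H\"older seminorms of \emph{all} derivatives up to order $l$.
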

The first upper bound is useful because it requires a slower decay of $q$ to work while the second one is useful because it captures the H\"older regularity of the covariance.
\begin{proof}
First of all, for all $\alpha,\beta\in\N^d$ with $|\alpha|,|\beta|\leq l$ and each $x\in\calT$, by integration by parts,
\[
|\partial^{\alpha+\beta}\kappa(x)|\leq\int_\calT|\partial^\alpha q(y)\partial^\beta q(x-y)|dy\leq \|q\|_{W^{l,2}}^2\leq \|q\|_{W^{l,1}}\|q\|_{C^l}\, .
\]
Observe that the first inequality shows the first statement of the lemma. Next, for each $x,y\in\calX$ distinct,
\[
|\partial^{\alpha+\beta}\kappa(x)-\partial^{\alpha+\beta}\kappa(y)|\leq\int_\calT|\partial^{\alpha+\beta} q(z)||q(x-z)-q(y-z)|dz\leq \|q\|_{W^{l,1}}\|q\|_{C^{l,\nu}}\textup{dist}(x,y)^{\nu}
\]
so that $\|\partial^{\alpha+\beta}\kappa\|_{C^{\nu}}\leq \|q\|_{W^{l,1}}\|q\|_{C^{l,\nu}}$. Summing the two inequalities over the possible values of $\alpha+\beta$ with $|\alpha+\beta|\leq l$ we get the result.
\end{proof}
\begin{remark}\label{rk:sup_1}
The result of Lemma \ref{l:sup_convolution} holds, with exactly the same proof, if we replace $\calT$ by $\R^2$.
\end{remark}
In this third lemma, we show that we can control the supremum of the field (and its derivatives) in terms of the H\"older norms of its covariance. In particular, only H\"older regularity is needed to obtain upper bounds for the decay of the field.
\begin{lemma}\label{l:sup_lemma}
Assume that in the expression $\calT=\R/(2\pi R_1\Z)\times\dots\R/(2\pi R_d\Z)$ defining $\calT$, for all $i\in\{1,\dots,d\}$, $R_i\geq 1$. Let $l\in\N$ and $\nu\in]0,1[$. Let $f$ be a Gaussian field on $\calT$ with covariance function $K\in C^{l,\nu;l,\nu}(\calT)$.
Let $M_{K,l,\nu}=\|K\|_{C^{l,\nu;l,\nu}(\calT)}$. Then, there exists $C=C(d,l,\nu)<+\infty$ such that
\begin{equation}\label{e:expected_supremum_bound}
\E\brb{\|f\|_{C^l}}\leq C M_{K,
l,\nu}^{1/2}\brb{1+|\ln\br{|\calT|}|^{1/2}+|\ln\br{M_{K,l,\nu}}|^{1/2}}\, .
\end{equation}
Moreover, for each $\lambda\geq 0$,
\begin{equation}\label{e:btis}
\prob\brb{\|f\|_{C^l(\calT)}\geq \E\brb{\|f\|_{C^l(\calT)}}+\lambda}\leq 2 e^{-\frac{\lambda^2}{2|\calT|M_{K,l,\nu}^2}}\, .
\end{equation}
\end{lemma}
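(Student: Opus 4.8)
The plan is to realize $\|f\|_{C^l(\calT)}$ as the supremum of a single a.s.\ bounded, separable, centered Gaussian process and then to apply Dudley's entropy bound for \eqref{e:expected_supremum_bound} and the Borell--Tsirelson--Ibragimov--Sudakov (Borell--TIS) inequality for \eqref{e:btis}. Write $M=M_{K,l,\nu}$. By Lemma \ref{l:basic_regularity}, $f$ is a.s.\ of class $C^l$, and for every multi-index $\alpha$ with $|\alpha|\le l$ the field $\partial^\alpha f$ is centered Gaussian with covariance $\partial^{\alpha,\alpha}K$; from the definition of the $C^{l,\nu;l,\nu}$-norm one has $\sup_{x,y}|\partial^{\alpha,\beta}K(x,y)|\le M$ for all $|\alpha|,|\beta|\le l$, and the $C^\nu$-seminorm of each $\partial^{\alpha,\alpha}K$ is at most $M$. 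Index a process by $T=\{(\alpha,\epsilon,x)\st |\alpha|\le l,\ \epsilon\in\{+1,-1\},\ x\in\calT\}$ and set $X_{(\alpha,\epsilon,x)}=\epsilon\,\partial^\alpha f(x)$; then $\sup_{t\in T}X_t=\|f\|_{C^l(\calT)}$, the process is a.s.\ bounded, and $\sup_{t\in T}\E[X_t^2]=\sup_{|\alpha|\le l,\,x\in\calT}\partial^{\alpha,\alpha}K(x,x)\le M$.

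For \eqref{e:expected_supremum_bound} I would bound the metric entropy of $(T,\rho)$ for the canonical pseudo-metric $\rho(t,t')=\E[(X_t-X_{t'})^2]^{1/2}$. If $t=(\alpha,\epsilon,x)$ and $t'=(\alpha,\epsilon,y)$ share the same $(\alpha,\epsilon)$, then, using $\partial^{\alpha,\alpha}K(x,y)=\partial^{\alpha,\alpha}K(y,x)$ and the Hölder bound, $\rho(t,t')^2=\partial^{\alpha,\alpha}K(x,x)-2\partial^{\alpha,\alpha}K(x,y)+\partial^{\alpha,\alpha}K(y,y)\le 2M\,\textup{dist}(x,y)^\nu$, while $\rho(t,t')^2\le 4M$ for all other pairs; in particular the $\rho$-diameter of $T$ is at most $2\sqrt M$. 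Since there are only $C(d,l)$ pairs $(\alpha,\epsilon)$ and $R_i\ge 1$, a Euclidean ball of radius $r\le 1$ in one copy of $\calT$ has $\rho$-diameter $\le\sqrt{2M}(2r)^{\nu/2}$ and $\calT$ is covered by $\le C(d)|\calT|r^{-d}$ such balls, whence
\[
N(T,\rho,\varepsilon)\le C(d,l)\,|\calT|\,(M/\varepsilon^2)^{d/\nu}\quad\text{for }0<\varepsilon\le c(\nu)\sqrt M,\qquad N(T,\rho,\varepsilon)\le C(d,l)|\calT|\ \text{for }c(\nu)\sqrt M<\varepsilon\le 2\sqrt M,
\]
and $N(T,\rho,\varepsilon)=1$ for $\varepsilon>2\sqrt M$. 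Dudley's inequality then yields $\E[\|f\|_{C^l(\calT)}]=\E[\sup_{t\in T}X_t]\le C\int_0^{2\sqrt M}\sqrt{\log N(T,\rho,\varepsilon)}\,d\varepsilon$; substituting $\varepsilon=\sqrt M\,s$ (so that $\log N(T,\rho,\sqrt M s)\le C(d,l,\nu)(1+\log|\calT|+\log_+(1/s))$), using $\log|\calT|\ge\log((2\pi)^d)\ge 0$ together with the convergence of $\int_0^2\sqrt{1+\log_+(1/s)}\,ds$, this integral is $\le C(d,l,\nu)\sqrt M\,(1+\sqrt{\log|\calT|})$, which gives \eqref{e:expected_supremum_bound} (the extra $|\log M|^{1/2}$ summand in \eqref{e:expected_supremum_bound} only weakens the estimate and may be dropped).

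For the tail bound \eqref{e:btis}, apply the Borell--TIS inequality to the a.s.\ bounded centered Gaussian process $(X_t)_{t\in T}$ above: since $\sup_{t\in T}\E[X_t^2]\le M$,
\[
\prob\big[\|f\|_{C^l(\calT)}\ge \E[\|f\|_{C^l(\calT)}]+\lambda\big]=\prob\big[\sup_{t\in T}X_t\ge \E[\sup_{t\in T}X_t]+\lambda\big]\le e^{-\lambda^2/(2M)}.
\]
Since $|\calT|\ge(2\pi)^d\ge 1$, one has $M\le|\calT|M^2$ whenever $|\calT|M\ge 1$ (the regime relevant to the applications of the lemma), so $e^{-\lambda^2/(2M)}\le 2e^{-\lambda^2/(2|\calT|M^2)}$; in the complementary regime the displayed bound is already stronger than \eqref{e:btis}. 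This yields \eqref{e:btis}.

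The only genuinely technical point is the covering-number estimate of the second paragraph: one must read off the precise $|\calT|$-dependence from the hypothesis $R_i\ge 1$ and then check that Dudley's integral — of a log-covering-number of size $\log|\calT|+O_{d,l,\nu}(1)$ integrated over a range of length $O(\sqrt M)$ — reproduces the announced $\sqrt M\,(1+\sqrt{\log|\calT|})$ growth. The appeals to Lemma \ref{l:basic_regularity}, to Dudley's bound, and to Borell--TIS are all standard, and the reduction from $\|f\|_{C^l}$ to the process $(X_t)$ over the finite index set of multi-indices is routine bookkeeping.
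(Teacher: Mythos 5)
Your argument for \eqref{e:expected_supremum_bound} is essentially the paper's. The paper invokes the stationary form of the entropy bound (Theorem 11.18 of \cite{ledoux_talagrand}, with covering numbers written as volume ratios $|\calT|/|B_K(x,r)|$), whereas you use Dudley's inequality with an explicit Euclidean covering; the two are interchangeable here, and both hinge on the same two facts, namely $d_K(x,y)^2\leq 2M\,\textup{dist}(x,y)^\nu$ and the diameter bound $2\sqrt M$. The reduction of $\|f\|_{C^l}$ to a single process indexed by $(\alpha,\epsilon,x)$ is the same bookkeeping the paper performs on $\calT\times J_l$. Your substitution $\varepsilon=\sqrt M\,s$ in the entropy integral actually removes the $|\ln M|^{1/2}$ term that the paper's cruder evaluation of $\int_0^{2\sqrt M}|\ln r|^{1/2}\,dr$ produces, so you prove a slightly stronger form of \eqref{e:expected_supremum_bound}; since that term only enlarges the right-hand side, this is perfectly acceptable.

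The one genuine slip is the last step of the tail bound. Borell--TIS with your (correct) variance bound $\sigma_T^2\leq M$ gives $e^{-\lambda^2/(2M)}$, and this implies \eqref{e:btis} exactly when $|\calT|M\geq 1$. In the complementary regime $|\calT|M<1$ you have the comparison backwards: there $|\calT|M^2<M$, so $2e^{-\lambda^2/(2|\calT|M^2)}$ is \emph{smaller} than $e^{-\lambda^2/(2M)}$ for large $\lambda$, and your Borell--TIS bound does not imply \eqref{e:btis}. No argument can close this, since \eqref{e:btis} as literally stated fails in that regime (take $f$ constant with variance $M\ll 1/|\calT|$); the paper's own proof papers over the same point by asserting that the pointwise variance is bounded by $M^2$ rather than $M$. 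The honest fix is either to assume $M\geq 1/|\calT|$ (true in every application) or to state the conclusion as $e^{-\lambda^2/(2M)}$, which is what your argument actually delivers.
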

\begin{remark}\label{rk:sup_2}
The result of Lemma \ref{l:sup_lemma} holds, with exactly the same proof, if we replace $\calT$ with a Euclidean ball of radius at least one in $\R^d$.
\end{remark}
The expectation bound will follow from classical results for Gaussian processes (Theorem 11.18 of \cite{ledoux_talagrand}) and the tail bound will then follow from the Borell-TIS inequality (see Theorem 2.1.1 of \cite{adler_taylor}).
\begin{proof}
We start by proving the expectation bound. To bound $\E\brb{\|f\|_{C^l(\calT)}}$ it is enough to bound the expected supremum of each partial derivative $\partial^\alpha f$ for $|\alpha|\leq l$. Since all of the derivatives of $K$ up to $l$ in each variable are $\nu$-H\"older, this reduces the problem to the case $l=0$. If $M_{K,0,\nu}=0$ then $f$ is a.s. constant and so its supremum is centered so the bound is trivially satisfied so we assume $M_{K,0,\nu}>0$. In this case, we are ready to apply Theorem 11.18 of \cite{ledoux_talagrand}. To this end, let $d_K$ be the canonical pseudo-metric on $\calT$ associated to the field $f$ restricted to $\calT$: for each $x,y\in\calT$, $d_K(x,y):=\E\brb{(f(x)-f(y))^2}^{1/2}=\br{K(x,x)+K(y,y)-2K(x,y)}^{1/2}$ and for each $x\in\calT$ and $r>0$, let $B_K(x,r)$ be the set of $y\in\calT$ such that $d_K(x,y)\leq r$. It should be distinguished from $B(x,r)$ be the metric ball of $\calT$ of radius $r$ centered at $x$. By Theorem 11.18 of \cite{ledoux_talagrand}, since $f$ is stationary, there is an absolute constant $C_1<+\infty$ such that
\[
\E\brb{\sup_\calT |f|}\leq C_1\sup_{x\in\calT}\int_0^{+\infty}\brb{\ln\br{\frac{|\calT|}{|B_K(x,r)|}}}^{1/2}dr\, .
\]
For each $x,y\in\calT$, by the triangle inequality and the H\"older condition respectively, we have
\[
d_K(x,y)\leq 2M_{K,0,\nu}^{1/2};\ d_K(x,y)\leq\br{2M_{K,0,\nu}}^{1/2}\textup{dist}(x,y)^{\nu/2}\, .
\]
Thus, if $r\geq 2M_{K,0,\nu}^{1/2}$, $B_K(x,r)=\calT$ and for all $r>0$, $B\br{x,\rho(r)} \subset B_K(x,r)$ where $\rho(r)=(2M_{K,0,\nu})^{-1/\nu}r^{2/\nu}$.
These observations imply that the integrand vanishes for $r\geq 2(M_{K,0,\nu})^{1/2}$ so that
\[
\E\brb{\sup_\calT |f|}\leq C_1\int_0^{2M_{K,0,\nu}^{1/2}}\brb{\ln\br{\frac{|\calT|}{|B\br{x,\rho(r)}|}}}^{1/2}dr
\]
where $x$ is any point of $\calT$. Using $\sqrt{a+b}\leq \sqrt{a}+\sqrt{b}$ for $a,b\geq0$ repeatedly, we see that for some $C_2=C_2(\nu)$,
\[
\E\brb{\sup_\calT|f|}\leq C_2 M_{K,0,\nu}^{1/2}\brb{|\ln\br{|\calT|}|^{1/2}+\ln\br{M_{K,0,\nu}}^{1/2}}+\int_0^{2 M_{K,0,\nu}^{1/2}}|\ln(r)|^{1/2}dr\, .
\]
Since $\int_0^x |\ln(s)|^{1/2}ds=O(x(1+|\ln(x)|))$, we get, for some $C_3=C_3(d,l,\nu)<+\infty$,
\[
\E\brb{\sup_\calT |f|}\leq C_3 M_{K,0,\nu}^{1/2}\brb{1+|\ln\br{|\calT|}|^{1/2}+|\ln\br{M_{K,0,\nu}}|^{1/2}}\, .
\]
This is exactly \eqref{e:expected_supremum_bound} for $l=0$. As discussed above the general case follows readily. The bound \eqref{e:btis} follows from Theorem 2.1.1 of \cite{adler_taylor}. Indeed, let $J_l=\{\alpha\in\N^d \st |\alpha|\leq l\}$. We can define a Gaussian field on $\calT\times J_l$ defined as $(x,\alpha)\mapsto \partial^\alpha f(x)$. Then, $\|f\|_{C^l(\calT)}$ is the sup-norm of this process. The maximal pointwise variance of this process is bounded by $M_{K,l,\nu}^2$ so the aforementioned theorem applies and gives \eqref{e:btis}.
\end{proof}

\subsection{An approximation result}

For each $l\in\N$ and $\nu\in]0,1[$, let $\Gamma_l(\calT)$ (resp. $\Gamma_{l,\nu}(\calT)$ be the space of Gaussian measures on $C^l(\calT)$ (resp. $C^{l,\nu}(\calT)$) equipped with the topology of weak-* convergence in $C^l$ (resp. $C^{l,\nu}$) topology. For each $\gamma\in\Gamma_0(\calT)$, let $H_\gamma$ be the Cameron-Martin space of $\gamma$.

\begin{lemma}\label{l:cv_gamma_to_k}
Fix $l\in\N$ and $\nu\in]0,1[$. Let $(\gamma_n)_{n\in\N}$ be a sequence of Gaussian measures in $\Gamma_{l,\nu}(\calT)$  (resp. $\Gamma_l(\calT)$) converging to $\gamma$ in $\Gamma_{l,\nu}(\calT)$ (resp. $\Gamma_l(\calT)$). Let $K$ be the covariance function of the field defined by $\Gamma$ and for each $n\in\N$, let $K_n$ be the covariance of the field defined by $\gamma_n$. Then, the sequence $(K_n)_{n\in\N}$ converges to $K$ in $C^{l,\nu;l,\nu}(\calT)$ (resp. $ C^{l;l}(\calT)$).
\end{lemma}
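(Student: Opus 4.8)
The plan is to reduce the convergence of covariances $K_n \to K$ in the appropriate norm to the already-assumed weak-* convergence $\gamma_n \to \gamma$ in $\Gamma_{l,\nu}(\calT)$ (resp. $\Gamma_l(\calT)$), using the fact that the covariance function of a Gaussian field is determined by second moments of linear evaluation functionals, and that these functionals — together with their derivatives up to order $l$ — are continuous on the underlying Banach space. First I would observe that for a Gaussian measure $\mu$ on $C^l(\calT)$ with covariance $K^\mu$, one has for every $x \in \calT$ and multi-index $\alpha$ with $|\alpha| \le l$ the identity $\partial^{\alpha,\beta}_{x,y} K^\mu(x,y) = \int \partial^\alpha u(x)\, \partial^\beta u(y)\, d\mu(u)$, because the evaluation-of-a-derivative maps $u \mapsto \partial^\alpha u(x)$ are continuous linear functionals on $C^l(\calT)$ (and jointly continuous in $(x,u)$). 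Thus it suffices to show convergence of these second moments.

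The key analytic point is uniform integrability. Since $\gamma_n \to \gamma$ weak-* in $\Gamma_l(\calT)$, the total masses are bounded and, more importantly, one can control the second moments uniformly: for a centered Gaussian vector the $L^2$ and $L^4$ norms are comparable (Gaussian hypercontractivity), so a uniform bound on $\sup_n \E_{\gamma_n}[\|u\|_{C^l}^2]$ — which follows from weak-* convergence together with the fact that $u \mapsto \|u\|_{C^l}^2$ is lower semicontinuous and the limiting expectation is finite, or more directly from Fernique's theorem applied to the convergent sequence — upgrades to a uniform bound on fourth moments. This gives uniform integrability of the families $\{\partial^\alpha u(x)\,\partial^\beta u(y)\}_n$, uniformly in $(x,y)$, which combined with weak convergence of each finite-dimensional marginal yields $\partial^{\alpha,\beta} K_n(x,y) \to \partial^{\alpha,\beta} K(x,y)$ for each fixed $(x,y)$. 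To promote pointwise convergence to convergence in $C^{l;l}(\calT)$ (the sup-norm case), I would use a compactness/equicontinuity argument: the uniform second-moment bound, via the same Gaussian comparison, gives a uniform modulus of continuity for the functions $(x,y) \mapsto \partial^{\alpha,\beta} K_n(x,y)$, so by Arzelà–Ascoli the convergence is uniform. For the H\"older case $\Gamma_{l,\nu}$, the assumed weak-* convergence in the stronger $C^{l,\nu}$ topology supplies a uniform bound on the $C^{l,\nu;l,\nu}$-norms of the $K_n$ (again via Gaussian moment comparison applied to the H\"older seminorm, which is a countable supremum of continuous linear functionals), and a uniform bound on the $C^{l,\nu;l,\nu}$-norm together with pointwise convergence forces convergence in any $C^{l,\nu';l,\nu'}$ with $\nu' < \nu$, and then in $C^{l,\nu;l,\nu}$ itself by a standard interpolation/lower-semicontinuity argument.

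The main obstacle I anticipate is the passage from the uniform norm bound plus pointwise convergence to genuine convergence \emph{in the H\"older norm} $C^{l,\nu;l,\nu}$, rather than merely in a weaker H\"older norm: a bounded sequence in $C^{l,\nu}$ converging pointwise need not converge in $C^{l,\nu}$ in general. Here one uses that the limit $K$ also lies in $C^{l,\nu;l,\nu}$ (from $\gamma \in \Gamma_{l,\nu}$) and that the relevant seminorms are lower semicontinuous under the convergence already established; combined with the fact that in this setting one actually controls a \emph{slightly better} modulus — since the fields have covariances of class $C^{l,\nu'}$ for the specific $\nu$ at hand, and the statement only needs to be applied (in the proof of Theorem \ref{t:main_formula}) for the uniform-norm conclusion via Lemma \ref{l:sup_convolution} — so in practice it is enough to argue the $C^{l;l}$ case carefully and note the H\"older case follows by the same scheme with the stronger input topology. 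I would therefore write the proof primarily for the $C^{l;l}(\calT)$ statement and indicate that the $C^{l,\nu;l,\nu}$ version is obtained verbatim upon replacing every continuous linear functional $u \mapsto \partial^\alpha u(x)$ by the corresponding H\"older-difference functional $u \mapsto \bigl(\partial^\alpha u(x) - \partial^\alpha u(y)\bigr)/\textup{dist}(x,y)^\nu$ and invoking the stronger hypothesis.
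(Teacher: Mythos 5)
Your overall strategy --- identify $\partial^{\alpha,\beta}K_n(x,y)$ with the second moment of the evaluation functionals $u\mapsto\partial^\alpha u(x)$, and handle the H\"older case by replacing these with the difference-quotient functionals $u\mapsto \textup{dist}(x,x')^{-\nu}\br{\partial^\alpha u(x)-\partial^\alpha u(x')}$ --- is the same as the paper's, which runs the argument through a Skorokhod coupling: $f_n\to f$ a.s. in $C^{l}$ (resp. $C^{l,\nu}$), hence the relevant two-dimensional Gaussian marginals converge in law uniformly in $(x,y)$, hence their covariances converge uniformly. However, one of your intermediate justifications fails at the crucial step. You claim that a uniform bound on $\sup_n\E_{\gamma_n}\brb{\|u\|_{C^l}^2}$ yields a uniform modulus of continuity for $(x,y)\mapsto\partial^{\alpha,\beta}K_n(x,y)$ when $|\alpha|=|\beta|=l$, so that Arzel\`a--Ascoli upgrades pointwise to uniform convergence. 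This is false: uniform moment bounds control the size of the top-order covariance derivatives but not their oscillation (the measures with covariance $\cos(nx)\cos(ny)$ on the circle have uniformly bounded second moments and no common modulus of continuity). Since this is precisely the step that turns pointwise convergence into convergence in $C^{l;l}$ --- the entire content of the lemma --- it is a genuine gap. The repair is to use the convergence itself rather than mere boundedness: after a Skorokhod coupling one has $\|f_n-f\|_{C^l}\to0$ a.s., uniform integrability (Fernique or hypercontractivity, as you note) upgrades this to $\E\brb{\|f_n-f\|_{C^l}^2}\to0$, and then Cauchy--Schwarz gives $\sup_{x,y}|\partial^{\alpha,\beta}K_n(x,y)-\partial^{\alpha,\beta}K(x,y)|\leq \E\brb{\|f_n-f\|_{C^l}^2}^{1/2}\br{\E\brb{\|f_n\|_{C^l}^2}^{1/2}+\E\brb{\|f\|_{C^l}^2}^{1/2}}\to0$ directly, with no need for Arzel\`a--Ascoli.

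Your proposed resolution of the H\"older obstacle via ``lower semicontinuity plus interpolation'' also does not work: boundedness in $C^{l,\nu}$ together with convergence in every $C^{l,\nu'}$ for $\nu'<\nu$ and membership of the limit in $C^{l,\nu}$ still does not imply convergence in $C^{l,\nu}$. The correct resolution is the one you state only in your final sentence, and it makes the detour unnecessary: run the whole argument on the H\"older-difference functionals from the start. The hypothesis of weak-* convergence in $C^{l,\nu}$ gives (after coupling) $\|f_n-f\|_{C^{l,\nu}}\to0$ a.s., so the difference quotients converge uniformly over $x\ne x'$, and the same Cauchy--Schwarz bound applied to them yields convergence of the H\"older seminorms of $\partial^{\alpha,\beta}(K_n-K)$ directly. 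With the two faulty justifications excised and this final scheme retained, your proof coincides with the paper's.
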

\begin{proof}
Let us first assume convergence in $\Gamma_l(\calT)$. By Skorokhod's representation theorem, we can find a sequence $(f_n)_{n\in\N}$ of Gaussian fields converging a.s. to a Gaussian field $f$ in $C^{l,\nu}$-norm such that $f$ has law $\gamma$ and for each $n\in\N$, $f_n$ has law $\gamma_n$. Let $\alpha,\beta\in\N^d$ with $|\alpha|,|\beta|\leq l$. Let $\varphi:\R^2\rightarrow\R$ be continuous function with compact support. Then,
\begin{multline*}
\sup_{x,y\in\calT}\left|\E\brb{\varphi\br{\partial^\alpha f_n(x),\partial^\beta f_n(y)}}-\E\brb{\varphi\br{\partial^\alpha f(x),\partial^\beta f(y)}}\right|\\
\leq \E\brb{\sup_{x,y}\left|\varphi\br{\partial^\alpha f_n(x),\partial^\beta f_n(y)}-\varphi\br{\partial^\alpha f(x),\partial^\beta f(y)}\right|}
\end{multline*}
which tends to $0$ as $n\rightarrow+\infty$ since $\varphi$ is uniformly continuous and $(\partial^\alpha f_n,\partial^\beta f_n)$ converges uniformly on $\calT\times\calT$. Thus, the vectors $(\partial^\alpha f_n(x),\partial^\beta f_n(y))$ converge to $(\partial^\alpha f(x),\partial^\beta f(y))$ uniformly in $(x,y)$. But this implies that $\partial^{\alpha,\beta} K_n$ converges uniformly to $\partial^{\alpha,\beta}K$ as $n\rightarrow+\infty$. Thus, convergence in $\Gamma_l$ implies convergence of covariances in $C^{l;l}$. Assume now that $(\gamma_n)_{n_\in\N}$ converges in $\Gamma_{l,\nu}(\calT)$ to $\gamma$. As before, the supremum over $x, x',y\in\calT$, $x\neq x'$ of the following quantity tends to $0$ as $n\rightarrow+\infty$.
\begin{multline*}
\Big|\E\brb{\varphi\br{\textup{dist}(x,x')^{-\nu}\br{\partial^\alpha f_n(x)-\partial^\alpha f_n(x')},\partial^\beta f_n(y)}}-\\
\E\brb{\varphi\br{\textup{dist}(x,x')^{-\nu}\br{\partial^\alpha f(x)-\partial^\alpha f(x')},\partial^\beta f(y)}}\Big|\, .
\end{multline*}
This shows that the map $(x,x',y)\mapsto\textup{dist}(x,x')^{-\nu}\br{\partial^{\alpha,\beta}K_n(x,y)-\partial^{\alpha,\beta}K_n(x',y)}$ converge uniformly as $n\rightarrow+\infty$. By symmetry we conclude that $\partial^{\alpha,\beta}K_n$ converges to $\partial^{\alpha,\beta}K$ in the $C^{0,\nu;0,\nu}$ topology as $n\rightarrow+\infty$. This concludes the proof of the lemma.
\end{proof}

\bibliographystyle{plain}
\bibliography{sources}

\end{document}